\newcommand{\arxiv}[1]{\href{http://arxiv.org/abs/#1}{\tt arXiv:\nolinkurl{#1}}}
\newtheorem{theorem}{Theorem}[section]
\newtheorem{lemma}[theorem]{Lemma}
\newtheorem{proposition}[theorem]{Proposition}
\newtheorem*{theorem*}{Theorem}
\theoremstyle{remark}
\newtheorem{remark}[theorem]{Remark}
\newtheorem{definition}[theorem]{Definition}
\numberwithin{equation}{section}
\newcommand{\ci}[1]{_{ {}_{\scriptstyle #1}}}
\newcounter{vremennyj}
\renewcommand{\eqref}[1]{Equation (\ref{#1})}
\begin{document}
\title[Bellman function, Carleson Embedding Theorem and Doob's Inequality]{The Bellman functions of the Carleson Embedding Theorem and the Doob's Martingale Inequality}

\author{Jingguo Lai}

\address{Department of Mathematics \\ Brown University \\ Providence, RI 02912 \\ USA}
\email{jglai@math.brown.edu}




\keywords{Bellman function; Infinitely refining filtration}

\begin{abstract}
Evaluation of the Bellman functions is a difficult task. The exact Bellman functions of the dyadic Carleson Embedding Theorem \ref{THM1} and the dyadic maximal operators are obtained in \cite{M} and \cite{VV}. Actually, the same Bellman functions also work for the tree-like structure. In this paper, we give a self-complete proof of the coincidence of the Bellman functions on the more general infinitely refining filtered probability space, see Definition \ref{Def2}. The proof depends on a remodeling of the Bellman function of the dyadic Carleson Embedding Theorem.

\end{abstract}

\maketitle

\section{Introduction}
\par Throughout this paper, we denote the Lebesgue measure of a set $E$ by  $|E|$, the average value of $f$ on an interval $I$ by $\langle f\rangle_{\ci{I}}$ and the conjugate H\"{o}lder exponent of $p$ by $p'$, $\frac{1}{p}+\frac{1}{p'} = 1$. The celebrated dyadic Carleson Embedding Theorem states

\begin{theorem}[Dyadic Carleson Embedding Theorem]\label{THM1}
Let $\mathcal{D}=\{([0,1)+j)\cdot2^k: j,k\in\mathbb{Z}\}$ be the standard dyadic lattice on $\mathbb{R}$, and let $\{\alpha_{\ci{I}}\}_{\ci{I\in\mathcal{D}}}$ be a sequence of non-negative numbers satisfying the Carleson condition that: $\sum_{\ci{J\in\mathcal{D},J\subseteq I}}\alpha_{\ci{I}}\leq C |I|$ holds for all dyadic intervals $I\in\mathcal{D}$. Then the embedding
\begin{align}
\sum_{I\in\mathcal{D}}\alpha_{\ci{I}}|\langle f\rangle_{\ci{I}}|^p\leq C_p\cdot C||f||_{\ci{L^p}}^p~holds~for~all~f\in L^p,~where~p>1.\label{eq1}
\end{align}
Moreover, the constant $C_p=(p')^p$ is sharp (cannot be replaced by a smaller one).
\end{theorem}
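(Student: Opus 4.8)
The plan is to prove the embedding inequality via the Bellman function method, which is the natural tool given the paper's title, and then handle sharpness by exhibiting an extremal sequence. For the inequality, I would introduce the candidate Bellman function $B(F, f, m) = \sup\{\sum_I \alpha_I \langle f\rangle_I^p\}$, where the supremum is taken over all nonnegative $f$ with $\langle f\rangle_{[0,1)} = f$, $\langle f^p\rangle_{[0,1)} = F$, and over all Carleson sequences with constant $1$ and "tail" $\frac{1}{|J|}\sum_{I\subseteq J}\alpha_I = m \le 1$. The first key step is to write down the explicit formula
\begin{equation*}
B(F, f, m) = F \cdot \varphi(m), \qquad \varphi(m) = \Bigl(\text{an explicit function with } \varphi(0) = 0,\ \varphi(1) = (p')^p\Bigr),
\end{equation*}
and verify the main inequality (the "Bellman induction" or concavity-type estimate): for any splitting of the parent data $(F,f,m)$ into two children $(F_\pm, f_\pm, m_\pm)$ consistent with the dyadic averaging and the Carleson bookkeeping $m = \alpha + \frac{1}{2}(m_+ + m_-)$ with $\alpha \ge 0$, one has
\begin{equation*}
B(F, f, m) \ge \alpha f^p + \tfrac{1}{2}\bigl(B(F_+, f_+, m_+) + B(F_-, f_-, m_-)\bigr).
\end{equation*}

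The second step is the standard iteration: fix $f \in L^p$ and the Carleson sequence, normalize to the interval $[0,1)$, and apply the inequality above recursively down the dyadic tree to obtain $B(\langle f^p\rangle_{[0,1)}, \langle f\rangle_{[0,1)}, 1) \ge \sum_{I \in \mathcal{D}_n, I \subseteq [0,1)} \alpha_I \langle f\rangle_I^p + \text{(remainder)}$; letting $n \to \infty$ and using the martingale convergence / Lebesgue differentiation theorem to control the remainder, then summing over the dyadic intervals of unit scale and using dilation/translation invariance, yields $\sum_I \alpha_I \langle f\rangle_I^p \le C \cdot (p')^p \|f\|_{L^p}^p$. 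For sharpness, the third step is to reverse-engineer the extremizers: choose $f$ of the form $f = \mathbf{1}_{[0,1)} \cdot t^{-1/p}\,$-like power weights adapted to the dyadic scale (concretely $\langle f\rangle_I$ behaving like $|I|^{-1/(pp')}$ or an analogous geometric profile) and $\alpha_I$ saturating the Carleson condition, so that the ratio $\sum_I \alpha_I \langle f\rangle_I^p \big/ \|f\|_{L^p}^p$ tends to $(p')^p$; this amounts to checking that the Bellman inequality is asymptotically an equality along the trajectory traced by these data.

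The main obstacle I anticipate is the verification of the Bellman inequality itself — i.e., guessing the correct function $\varphi(m)$ and proving the pointwise estimate. This reduces to a one- or two-variable optimization (after using homogeneity in $F$, one is left with a constrained inequality in $f, f_\pm, m_\pm, \alpha$), and the delicate point is that the concavity is not the naive concavity of a smooth function but a supremum-type estimate that must be checked to be tight precisely along the extremal configuration; getting the boundary behavior $\varphi(1) = (p')^p$ exactly right, rather than up to a constant, is where the sharp value of $C_p$ is won or lost. A secondary technical point is justifying the passage to the limit in the iteration for general $f \in L^p$ (as opposed to bounded $f$), which requires a truncation argument plus the fact that the filtration is infinitely refining so that $\langle f\rangle_I \to f$ a.e.
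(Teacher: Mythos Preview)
Your overall architecture is the same as the paper's: define a Bellman function in three variables, establish a main inequality of the form
\[
B(F,\mathbf{f},M)\ \ge\ \Delta M\cdot \mathbf{f}^p + \tfrac12\bigl(B(F_+,\mathbf{f}_+,M_+)+B(F_-,\mathbf{f}_-,M_-)\bigr),
\]
and iterate down the tree. That part is fine and matches Section~3 of the paper.

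The genuine gap is your ansatz $B(F,f,m)=F\cdot\varphi(m)$. A function of this shape cannot satisfy the main inequality: it is linear in $F$ and independent of $\mathbf{f}$, so its Hessian in the $(F,M)$-plane has determinant $-(\varphi')^2<0$ whenever $\varphi'\neq 0$, and hence it is never concave on the domain. More concretely, the main inequality forces $\partial B/\partial M\ge \mathbf{f}^p$, which with your ansatz becomes $F\varphi'(M)\ge \mathbf{f}^p$; but $F$ can be arbitrarily large relative to $\mathbf{f}^p$, so this gives no useful constraint on $\varphi$, while concavity simultaneously fails. The homogeneity of the problem is $B(t^pF,t\mathbf{f},M)=t^pB(F,\mathbf{f},M)$, which does \emph{not} allow you to factor out $F$; it allows you to factor out $\mathbf{f}^p$ after passing to the Burkh\"older hull. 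The paper (Section~4) does exactly this: it sets $u(\mathbf{f},M)=\sup_F\{\mathbb{B}(F,\mathbf{f},M)-C_pF\}$, uses homogeneity to write $u(\mathbf{f},M)=\mathbf{f}^p\varphi(M)$, and then the super-solution is
\[
\mathbb{B}(F,\mathbf{f},M)=C_pF+\mathbf{f}^p\varphi(M)=(p')^pF-\frac{(p\mathbf{f})^p}{(p-1)\,[M+(p-1)]^{p-1}}.
\]
The function $\varphi(M)$ multiplies $\mathbf{f}^p$, not $F$, and is negative with $\varphi(0)=-(p')^p$, not the boundary values you wrote.

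On sharpness, your plan to build explicit power-type extremizers is a legitimate route but different from what the paper does. The paper instead shows that the concavity and derivative constraints on $\varphi$ (namely $\varphi\le 0$, $\varphi'\ge 1$, $\varphi\varphi''-p'(\varphi')^2\ge 0$) force $\sup_M(-\varphi(M))\ge (p')^p$ via an ODE optimization; this yields sharpness without ever writing down an extremal $f$ or $\{\alpha_I\}$. Your extremizer approach would work too, but you would need to be precise about which $f$ and $\alpha_I$ to take---the vague ``$|I|^{-1/(pp')}$ profile'' is not quite right as stated, and the computation that the ratio tends to $(p')^p$ is where the actual content lies.
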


\par An approach of proving Theorem \ref{THM1} is the introduction of the Bellman function. Without loss of generality, we can assume $f\geq0$. Following \cite{NT} and \cite{NTV}, we define the Bellman function in three variables $(F,\textbf{f},M)$ as
\begin{align}
\mathcal{B}(F,\textbf{f},M; C)=\sup\left\{|I|^{-1}\sum_{J\in\mathcal{D},J\subseteq I}\alpha_{\ci{J}}\langle  f\rangle_{\ci{J}}^p:f,\{\alpha_{\ci{I}}\}_{\ci{I\in\mathcal{D}}}~\textup{satisfy (i), (ii), (iii), and (iv)}\right\},\label{eq2}
\end{align}
$$\textup{(i)}~\langle f^p\rangle_{\ci{I}}=F;~\textup{(ii)}~\langle f\rangle_{\ci{I}}=\textbf{f};~\textup{(iii)}~|I|^{-1}\sum_{J\subseteq{I}}\alpha_{\ci{J}}=M;~\textup{(iv)}~\sum_{J'\subseteq J}\alpha_{\ci{J}}\leq C|J|~\textup{for all}~J\in\mathcal{D}.$$

Note that the Bellman function $\mathcal{B}(F,\textbf{f},M; C)$ defined above dose not depend on the choice of the interval $I$.

\par In \cite{NT}, (\ref{eq1}) was first proved using the Bellman function method for the case $p=2$, and in \cite{NTV}, the sharpness for the case $p=2$ was also claimed. Later, A. Melas found in \cite{M} the exact Bellman function for all $p>1$ in a tree-like setting using combinatorial and covering reasoning. In \cite{VV}, an alternative way of finding the exact Bellman function based on Monge-Amp\`{e}re equation was also established.

\par The Bellman functions have deep connetions to the Stochastic Optimal Control theory \cite{NTV}. Finding the exact Bellman functions is a difficult task. Both the combinatorial methods in \cite{M} and the methods of solving the Bellman PDE in \cite{VV} are quite complicated. Luckily, the proof of Theorem \ref{THM1} only needs a super-solution instead of the exact Bellman function, see \cite{NT}, \cite{NTV} and section 3. The computation of the exact Bellman functions usually reflects deeper structure of the corresponding harmonic analysis problem. It is interesting to note that the exact Bellman function of Theorem \ref{THM1} is not restricted to the standard dyadic lattice. In \cite{M}, it also works for the tree-like structure. Let us here consider a more general situation.

\par Let $(\mathcal{X}, \mathcal{F}, \{\mathcal{F}_n\}_{n\geq0}, \mu)$ be a discrete-time filtered probability space. By a discrete-time filtration, we mean a sequence of non-decreasing $\sigma$-fields
$$\{\emptyset, \mathcal{X}\}=\mathcal{F}_0\subseteq\mathcal{F}_1\subseteq...\subseteq\mathcal{F}_n\subseteq...\subseteq{\mathcal{F}}.$$
We introduce notations $f_n=\mathbb{E}^\mu[f|\mathcal{F}_n]$ and $\langle f\rangle_{\ci{E,\mu}}=\mu(E)^{-1}\int_Efd\mu$.

\begin{definition}\label{Def1}
A sequence of non-negative random variables $\{\alpha_n\}_{n\geq0}$ is called a \emph{Carleson sequence}, if each $\alpha_n$ is $\mathcal{F}_n$-measurable and
\begin{align}\label{eq6}
\mathbb{E}^\mu\left[\sum_{k\geq n}\alpha_k|\mathcal{F}_n\right]\leq C~\textup{for every}~ n\geq0.
\end{align}
\end{definition}

\begin{definition}\label{Def2}
$\{\mathcal{F}_n\}_{n\geq0}$ is called an \emph{infinitely refining filtration}, if for every $\varepsilon>0$, every $n\geq0$ and every set $E\in\mathcal{F}_n$, there exists a real-valued $\mathcal{F}_k$-measurable $(k>n)$ random variable $h$, such that: (i) $|h\textbf{1}_{\ci{E}}|=\textbf{1}_{\ci{E}}$ and (ii) $\int_E|h_n|d\mu\leq\varepsilon$.
\end{definition}

\begin{remark}
The random variable $h$ behaves like a Haar function in classical harmonic analysis. It allows us to reduce the general probabilistic problem to a dyadic one.
\end{remark}

\begin{remark}
The tree-like structure considered in \cite{M} is an infinitely refining filtration. So our results can be seen as a generalization of A. Melas' in this point of view.
\end{remark}

\begin{theorem}[Martingale Carleson Embedding Theorem]\label{THM3} 
If $f\in L^p(\mathcal{X}, \mathcal{F}, \mu)$ and $\{\alpha_n\}_{n\geq0}$ is a Carleson sequence, then
\begin{align}\label{eq7}
\mathbb{E}^\mu\left[\sum_{n\geq0}\alpha_n|f_n|^p\right]\leq C_p\cdot C\cdot\mathbb{E}^\mu\left[|f|^p\right].
\end{align}
Moreover, if $\{\mathcal{F}_n\}_{n\geq0}$ is an infinitely refining filtration, then the constant $C_p=(p')^p$ is sharp.
\end{theorem}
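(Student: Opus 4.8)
The plan is to prove the two assertions of Theorem~\ref{THM3} separately, using the dyadic case in both. For the embedding \eqref{eq7} (which needs no refining hypothesis) I would run the usual Bellman induction, but along the filtration $\{\mathcal{F}_n\}_{n\geq0}$ rather than along the dyadic lattice. After the reduction to $f\geq0$ made in the introduction, set $F_n=\mathbb{E}^\mu[f^p\,|\,\mathcal{F}_n]$, $f_n=\mathbb{E}^\mu[f\,|\,\mathcal{F}_n]$ and $M_n=\mathbb{E}^\mu\big[\sum_{k\geq n}\alpha_k\,\big|\,\mathcal{F}_n\big]$; then $f_n^p\leq F_n$ by Jensen, $0\leq M_n\leq C$ by \eqref{eq6}, and, $\alpha_n$ being $\mathcal{F}_n$-measurable, $M_n=\alpha_n+\mathbb{E}^\mu[M_{n+1}\,|\,\mathcal{F}_n]$. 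Let $\mathbb{B}=\mathbb{B}(F,\textbf{f},M;C)$ be the Bellman function of the dyadic Carleson Embedding Theorem from \cite{M}, \cite{VV}, or rather a remodeled super-solution of it as announced in the abstract, of which I would isolate two properties on the domain $\{0\leq\textbf{f}^p\leq F,\ 0\leq M\leq C\}$: the size bound $0\leq\mathbb{B}(F,\textbf{f},M;C)\leq (p')^p\,C\,F$, and a \emph{main inequality in averaged form}, namely that for every $\beta\geq0$ and every random point $(F',\textbf{f}',M')$ of the domain with $\mathbb{E}F'=F$, $\mathbb{E}\textbf{f}'=\textbf{f}$ and $\mathbb{E}M'=M-\beta$ one has $\mathbb{B}(F,\textbf{f},M;C)\geq\beta\,\textbf{f}^p+\mathbb{E}\big[\mathbb{B}(F',\textbf{f}',M';C)\big]$. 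The point of this averaged form is that it refers only to conditional expectations and not to the dyadic combinatorics; I would deduce it from joint concavity of $\mathbb{B}(\cdot,\cdot,\cdot;C)$ (via Jensen), followed by the elementary obstacle inequality $\mathbb{B}(F,\textbf{f},M;C)\geq\beta\,\textbf{f}^p+\mathbb{B}(F,\textbf{f},M-\beta;C)$, which just records that one may spend Carleson mass $\beta$ at the present node and keep $M-\beta$ for the descendants.

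Granting these, the induction is short: applying the averaged main inequality conditionally on $\mathcal{F}_n$, with $\beta=\alpha_n$ and $(F',\textbf{f}',M')=(F_{n+1},f_{n+1},M_{n+1})$, gives $\mu$-a.e.
\[
\mathbb{B}(F_n,f_n,M_n;C)\ \geq\ \alpha_n\,f_n^p+\mathbb{E}^\mu\big[\mathbb{B}(F_{n+1},f_{n+1},M_{n+1};C)\,\big|\,\mathcal{F}_n\big].
\]
Integrating and telescoping from $0$ to $N$, and using $\mathbb{B}\geq0$, yields $\mathbb{E}^\mu[\mathbb{B}(F_0,f_0,M_0;C)]\geq\sum_{n=0}^{N}\mathbb{E}^\mu[\alpha_n f_n^p]$; since $\mathcal{F}_0=\{\emptyset,\mathcal{X}\}$ is trivial, the left side equals $\mathbb{B}\big(\mathbb{E}^\mu f^p,\mathbb{E}^\mu f,\mathbb{E}^\mu\sum_k\alpha_k;C\big)\leq (p')^p\,C\,\mathbb{E}^\mu[f^p]$. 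Letting $N\to\infty$ by monotone convergence gives \eqref{eq7} with $C_p=(p')^p$, a truncation $f\wedge m$, $m\to\infty$, removing any integrability scruples.

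For the sharpness I would prove the reverse bound when $\{\mathcal{F}_n\}$ is infinitely refining, by transplanting near-extremal \emph{dyadic} configurations into the filtration. Definition~\ref{Def2} furnishes, on any $E\in\mathcal{F}_n$ and any $\varepsilon>0$, an $\mathcal{F}_k$-measurable $h$ with $|h\textbf{1}_{\ci{E}}|=\textbf{1}_{\ci{E}}$ and $\int_E|h_n|\,d\mu\leq\varepsilon$: a $\pm1$-valued Haar function on $E$ with tiny $\mathcal{F}_n$-conditional mean, hence one splitting $E$ into two pieces of mass within $\tfrac12\varepsilon$ of $\tfrac12\mu(E)$. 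Iterating on every piece, I would build inside $\bigcup_n\mathcal{F}_n$ a binary tree $\{E_\omega\}$ indexed by finite $\{0,1\}$-strings, with $\mu(E_{\omega i})$ within $\varepsilon_{|\omega|}$ of $\tfrac12\mu(E_\omega)$ for a summable sequence $(\varepsilon_j)_j$ fixed in advance. Given a dyadic pair $(g,\{\alpha_{\ci{J}}\}_{\ci{J\subseteq I}})$ nearly realizing $\mathbb{B}(F,\textbf{f},M;C)$ — equivalently, the near-extremizers witnessing the sharpness of $(p')^p$ in Theorem~\ref{THM1} — I would take $f$ to be the step function carried by this tree according to the dyadic values of $g$, take $\alpha_n$ to be the corresponding weights attached to the appropriate generation, and then check that the resulting sequence is Carleson with constant $C(1+o(1))$ and that $\mathbb{E}^\mu\big[\sum_n\alpha_n f_n^p\big]/\mathbb{E}^\mu[f^p]$ differs from the dyadic ratio $\sum_{J\subseteq I}\alpha_{\ci{J}}\langle g\rangle_{\ci{J}}^p\big/\big(|I|\,\langle g^p\rangle_{\ci{I}}\big)$ — which can be made arbitrarily close to $(p')^p\,C$ by Theorem~\ref{THM1} — by an error governed by $\sum_j\varepsilon_j$. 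Choosing the $\varepsilon_j$ small then drives this ratio above any prescribed constant below $(p')^p\,C$, which is exactly the asserted sharpness; in Bellman-function language this is the inequality $\mathcal{B}^{\mathrm{mart}}(F,\textbf{f},M;C)\geq\mathbb{B}(F,\textbf{f},M;C)$, so the two Bellman functions coincide, the opposite inequality being the embedding just proved.

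The main obstacle is the sharpness half, specifically the control of errors in the transplantation: Definition~\ref{Def2} only forces the conditional mean of $h$ to be small in $L^1$, not zero, so each generation of the tree perturbs both the martingale values $f_n$ and the masses $\mu(E_\omega)$, and one must show these perturbations do not compound — this is what forces the geometric smallness of $(\varepsilon_j)$ and, since an optimal dyadic configuration is in general infinitely deep, also a uniform-integrability / tail estimate on that configuration, together with the check that the transplanted weights still satisfy \eqref{eq6} up to a factor tending to $1$. On the embedding side the only real work is the ``remodeling'' itself: extracting from the formulas of \cite{M} or \cite{VV} a version of the Bellman function (or merely a super-solution attaining the sharp constant) that is jointly concave in $(F,\textbf{f},M)$ and obeys the obstacle inequality, so that the main inequality may be applied to an arbitrary conditional expectation in place of a dyadic bisection.
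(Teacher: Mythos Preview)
Your plan is correct and follows the paper's approach in both halves: the Bellman induction along the filtration (your ``averaged main inequality'' is exactly what the paper derives from concavity of $\mathcal{B}$ plus the obstacle inequality of Proposition~\ref{P1}(v)), and the transplantation of dyadic near-extremizers for sharpness. Two technical devices in the paper address precisely the obstacles you flag at the end. First, rather than handling an infinitely deep configuration with summable $(\varepsilon_j)$ and a tail estimate, the paper truncates the dyadic data to finitely many levels $N$ (using continuity of $\mathcal{B}$ to absorb the loss) and then uses a \emph{single} small $\varepsilon$ at every level; the accumulated error factors are $(1\pm\varepsilon)^N\to1$ as $\varepsilon\to0$ with $N$ fixed, and the uniform-integrability issue disappears. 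Second, since Definition~\ref{Def2} only bounds $\int_E|h_n|\,d\mu$, the conditional mean $h_n$ may still be large on a small set; before splitting into $\{h=\pm1\}$ the paper excises $\{x\in E:|h_n(x)|>\varepsilon/2\}$ (of measure $\leq(\varepsilon/2)\mu(E)$ by Chebyshev), obtaining \emph{pointwise} smallness of $h_n$ on the surviving set. This upgrade is the content of Proposition~\ref{P5}(ii) and is exactly what drives the verification that the transplanted weights satisfy \eqref{eq6} up to a factor $(1+\varepsilon)^N/(1-\varepsilon)^{2N}\to1$.
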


\par Here, again without loss of generality, we can assume $f\geq0$. We define the Bellman function $\mathcal{B}_{\mu}^{\mathcal{F}}(F,\textbf{f},M;C)$ in the martingale setting by
\begin{align}\label{eq8}
\mathcal{B}_{\mu}^{\mathcal{F}}(F,\textbf{f},M;C)
=\sup\left\{\mathbb{E}^\mu\left[\sum_{n\geq0}\alpha_nf_n^p\right]: f,  \{\alpha_n\}_{n\geq0} ~\textup{satisfy (i), (ii), (iii) and (iv)}\right\},
\end{align}
$$\textup{(i)}~ \mathbb{E}^\mu[f^p]=F;~\textup{(ii)}~ \mathbb{E}^\mu[f]=\textbf{f};
~\textup{(iii)}~ \mathbb{E}^\mu\left[\sum_{n\geq0}\alpha_n\right]=M;~\textup{(iv)}~  \{\alpha_n\}_{n\geq0}~ \textup{satisfies (\ref{eq6})}.$$

\par Now, we are ready to state the first main theorem in this paper.

\begin{theorem}[Coincidence of the Bellman functions]\label{THM A}
\begin{align}\label{eq 101}
\mathcal{B}_{\mu}^{\mathcal{F}}(F,\textbf{\textup{f}},M;C) \leq \mathcal{B}(F,\textbf{\textup{f}},M;C).
\end{align}

Moreover, if $\{\mathcal{F}_n\}_{n\geq0}$ is an infinitely refining filtration, then

\begin{align}\label{eq 102A}
\mathcal{B}_{\mu}^{\mathcal{F}}(F,\textbf{\textup{f}},M;C) = \mathcal{B}(F,\textbf{\textup{f}},M;C).
\end{align}
\end{theorem}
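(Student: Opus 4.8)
The plan is to establish the two inequalities $\le$ and $\ge$ between $\mathcal{B}_{\mu}^{\mathcal{F}}$ and $\mathcal{B}$ separately. The inequality \eqref{eq 101} should hold for an arbitrary filtration, while \eqref{eq 102A} requires the infinitely refining hypothesis to produce enough Haar-type oscillation.

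For the bound $\mathcal{B}_{\mu}^{\mathcal{F}} \le \mathcal{B}$, the idea is to transfer any admissible pair $(f, \{\alpha_n\})$ on $(\mathcal{X},\mathcal{F},\{\mathcal{F}_n\},\mu)$ to an admissible pair on the dyadic lattice without increasing the functional. Here I would invoke the known structural fact (this is essentially why the dyadic Bellman function is the ``universal'' one) that the dyadic Bellman function $\mathcal{B}(F,\mathbf{f},M;C)$ satisfies a concavity / main inequality: whenever a point $(F,\mathbf{f},M)$ is split as a convex combination of points $(F^{\pm},\mathbf{f}^{\pm},M^{\pm})$ arising from one step of a martingale together with the allotment of mass $\alpha$ at the current level, one has the super-solution inequality $\mathcal{B}(F,\mathbf{f},M;C) \ge \alpha \mathbf{f}^p + \tfrac12\mathcal{B}(F^+,\mathbf{f}^+,M^+;C) + \tfrac12\mathcal{B}(F^-,\mathbf{f}^-,M^-;C)$, and more generally the analogous inequality for arbitrary convex splittings, which is exactly the content that makes $\mathcal{B}$ work on tree-like structures in \cite{M}. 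Given this, I would run the martingale $f_n$, at each stage $n$ and each atom-or-cell $E\in\mathcal{F}_n$ use the values $\langle f^p\rangle_{E,\mu}$, $\langle f\rangle_{E,\mu}$, and the conditional Carleson mass as the three Bellman variables, and peel off the term $\mathbb{E}^\mu[\alpha_n f_n^p]$ by the main inequality applied to the $\mathcal{F}_n$-to-$\mathcal{F}_{n+1}$ refinement (a general convex splitting, not just dyadic). Summing over $n$ and using that the Bellman function is bounded and that $f\in L^p$ forces $f_n^p$ to be uniformly integrable so the tail contribution vanishes, I get $\mathbb{E}^\mu[\sum_n \alpha_n f_n^p] \le \mathcal{B}(F,\mathbf{f},M;C)$; taking the supremum over admissible pairs gives \eqref{eq 101}.

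For the reverse inequality under the infinitely refining hypothesis, the plan is to take a near-optimal dyadic competitor for $\mathcal{B}(F,\mathbf{f},M;C)$ and ``remodel'' it inside $(\mathcal{X},\mathcal{F},\{\mathcal{F}_n\},\mu)$. Concretely: start from the cell $\mathcal{X}=E_\emptyset$; the dyadic extremizer prescribes, at each dyadic cell, a splitting of the current averages $(F,\mathbf{f})$ into two children values together with a weight $\alpha$; I want to realize the same combinatorial splitting along the filtration. The obstacle is that $\mathcal{F}_{n+1}$ need not split a set $E\in\mathcal{F}_n$ into two pieces of equal measure carrying prescribed conditional averages. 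This is where Definition \ref{Def2} enters: given $E\in\mathcal{F}_n$ and $\varepsilon>0$, the Haar-like function $h$ (with $|h\mathbf{1}_E| = \mathbf{1}_E$ and $\int_E |h_n|\,d\mu \le \varepsilon$) lets me split $E$ into $E^+ = \{h>0\}\cap E$ and $E^- = \{h<0\}\cap E$ inside some $\mathcal{F}_k$ with $\mu(E^+),\mu(E^-)$ arbitrarily close to $\tfrac12\mu(E)$ (the smallness of $\int_E|h_n|$ forces near-balance), and I can further adjust $f$ on $E^\pm$ so that the conditional $p$-th moments and means match the dyadic prescription up to error $\varepsilon$. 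Iterating this to some large depth $N$, choosing the errors summably small, and then stopping $f$ to be constant on depth-$N$ cells, produces an $f\in L^p(\mathcal{X},\mu)$ and a Carleson sequence $\{\alpha_n\}$ (the Carleson condition \eqref{eq6} is inherited from the dyadic Carleson condition (iv) because the masses and measures are matched up to controllable error) whose functional value $\mathbb{E}^\mu[\sum_n\alpha_n f_n^p]$ is within $O(\varepsilon) + o_N(1)$ of the chosen near-optimal dyadic value. Letting first $N\to\infty$ (within the near-extremizer's depth) and then $\varepsilon\to 0$, and finally passing to the supremum, yields $\mathcal{B}_{\mu}^{\mathcal{F}}(F,\mathbf{f},M;C) \ge \mathcal{B}(F,\mathbf{f},M;C)$.

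The main obstacle is the bookkeeping in this remodeling step: one must show that the accumulated errors from replacing exact dyadic half-splittings by the near-balanced $\mathcal{F}$-splittings provided by the infinitely refining property stay controlled when summed over all cells down to depth $N$, and simultaneously that the constructed $\{\alpha_n\}$ genuinely satisfies the Carleson condition \eqref{eq6} rather than merely a perturbed version of it (possibly one renormalizes by a factor $1+O(\varepsilon)$ and absorbs it at the end, using continuity of $\mathcal{B}(F,\mathbf{f},M;C)$ in $C$). A secondary technical point is justifying the vanishing of the tail $\mathbb{E}^\mu[\sum_{n\ge N}\alpha_n f_n^p]$ and the legitimacy of truncating the extremizer at finite depth; this uses that $\mathcal{B}$ is finite together with the $L^p$ control on $f$. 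I expect the $\le$ direction to be comparatively routine once the general (non-dyadic) convex-splitting form of the main inequality for $\mathcal{B}$ is in hand.
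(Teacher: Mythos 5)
Your overall architecture matches the paper's: the $\le$ direction is a telescoping argument driven by the super-solution properties of the dyadic $\mathcal{B}$, and the $\ge$ direction remodels a near-extremal dyadic competitor inside $(\mathcal{X},\mathcal{F},\{\mathcal{F}_n\},\mu)$ using the Haar-type random variable $h$. However, there are concrete gaps, especially in the $\ge$ direction.

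For the $\le$ direction, be aware that the $\mathcal{F}_n$-to-$\mathcal{F}_{n+1}$ refinement is not a \emph{finite} convex splitting, so the main inequality \eqref{eq3} (or its finitely-many-pieces generalization \eqref{eq 110}) does not apply directly. The paper instead separates the $M$-increment step (which uses $\partial\mathcal{B}/\partial M \ge \textbf{f}^p$) from the averaging step, and handles the latter by Jensen's inequality for conditional expectations applied to the concave function $\mathcal{B}$: $\mathcal{B}(\mathbb{E}^\mu[X^{n+1}|\mathcal{F}_n]) \ge \mathbb{E}^\mu[\mathcal{B}(X^{n+1})|\mathcal{F}_n]$. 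Without Jensen you cannot justify the peeling-off step for a general $\sigma$-algebra refinement. Also, no uniform-integrability argument for the tail is needed: since $\mathcal{B}\ge 0$, the telescoping sum $\sum_n(\mathbb{E}^\mu[\mathcal{B}(X^n)]-\mathbb{E}^\mu[\mathcal{B}(X^{n+1})])$ is automatically bounded above by $\mathcal{B}(X^0)$.

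For the $\ge$ direction, your direct split $E^\pm = \{h=\pm1\}\cap E$ has a real problem. The hypothesis $\int_E|h_n|\,d\mu \le \varepsilon$ gives \emph{global} balance $|\mu(E^+)-\mu(E^-)|\le\varepsilon$, but to verify the Carleson condition \eqref{eq6} for the remodeled $\{\alpha_n\}$ you need to control $\mu(A\cap E^\pm)/\mu(A\cap E)$ for \emph{every} $\mathcal{F}_n$-measurable $A$, uniformly; a bound of $\varepsilon$ on $\int_{A\cap E}|h_n|\,d\mu$ is useless when $\mu(A\cap E)$ is small. The paper fixes this by first requesting $\int_E|h_n|\,d\mu\le\frac{\varepsilon^2}{4}\mu(E)$, then truncating $E$ to $\widetilde{E}=E\setminus\{|h_n|>\varepsilon/2\}$ (Chebyshev shows the removed set is $\le\frac{\varepsilon}{2}\mu(E)$), and \emph{only then} splitting $\widetilde{E}$ by the sign of $h$. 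The pointwise bound $|h_n|\le\varepsilon/2$ on $\widetilde{E}$ is what yields the local estimate $|\mu(A\cap E^+)-\mu(A\cap E^-)|\le\frac{\varepsilon}{2}\mu(A\cap E)$ needed for the Carleson bound (Proposition \ref{P5}(ii)). Without this truncation step your construction does not produce a Carleson sequence. Two further points: (1) the limit order should be to fix a finite-depth $N$ from a finite near-extremal dyadic sequence $\{\alpha_J\}$ and then send $\varepsilon\to0$, not $N\to\infty$ first; (2) rather than adjusting $f$ level-by-level to match moments, the paper transplants the dyadic $f$ directly onto the depth-$N$ cells, observes that all the averages are within $(1\pm\varepsilon)^N$ of the dyadic ones (Proposition \ref{P5}(iv)), and makes a single top-level correction $\widetilde{f}\mapsto\widetilde{f}+t_0g$ to hit $\mathbb{E}^\mu[|\widetilde{f}|^p]=F$ exactly. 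The resulting Carleson constant $(1+\varepsilon)^N/(1-\varepsilon)^{2N}$ is then absorbed via the scaling property of $\mathcal{B}_\mu^\mathcal{F}$ and continuity of $\mathcal{B}$, as you correctly anticipated.
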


\par For the Doob's martingale inequality, recall the definition of the maximal function associated to a discrete-time filtration $\{\mathcal{F}_n\}_{n=0}^\infty$
\begin{align}\label{eq13}
f^*(x)=\sup_{n\geq0}|f_n(x)|.
\end{align}
\begin{theorem}[Doob's Martingale Inequality]\label{THM4}
For every $p>1$ and every $f\in L^p(\mathcal{X},\mathcal{F},\mu)$, we have
\begin{align}\label{eq14}
||f^*||_{\ci{ L^p(\mathcal{X},\mathcal{F},\mu)}}^p\leq (p')^p\cdot ||f||_{\ci{ L^p(\mathcal{X},\mathcal{F},\mu)}}^p.
\end{align}
Moreover, if $\{\mathcal{F}_n\}_{n\geq0}$ is an infinitely refining filtration, then the constant $(p')^p$ is sharp.
\end{theorem}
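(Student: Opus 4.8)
The plan is to derive Theorem \ref{THM4} from the Martingale Carleson Embedding Theorem \ref{THM3} together with the coincidence of Bellman functions in Theorem \ref{THM A}. The upper bound \eqref{eq14} is the easy half: one realizes the maximal function via a suitable Carleson sequence. Concretely, for $f \geq 0$ (which we may assume), stop the first time the martingale $f_n$ exceeds a dyadic level, or more efficiently, write $f^*$ as an increasing limit and choose $\alpha_n$ supported on the "new record" sets $\{f_n = f^*\text{ so far}, f_n > f_{n-1}^*\}$ with weight equal to the measure of that set. A cleaner route: recall the standard identity that for the maximal function there is a Carleson sequence $\{\alpha_n\}$ with Carleson constant $C = 1$ such that $\sum_n \alpha_n f_n^p$ dominates a fixed multiple of $(f^*)^p$ pointwise after integration; feeding $C=1$ into \eqref{eq7} yields $\mathbb{E}^\mu[(f^*)^p] \leq (p')^p \mathbb{E}^\mu[|f|^p]$. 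This part requires no infinitely-refining hypothesis.

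For sharpness, I would exploit that on an infinitely refining filtration the Bellman function $\mathcal{B}_\mu^{\mathcal{F}}$ coincides with the dyadic Bellman function $\mathcal{B}$ by \eqref{eq 102A}, and that $\mathcal{B}$ is already known (from \cite{M}, \cite{VV}) to have the sharp asymptotic constant $(p')^p$. More precisely, the sharpness of Doob's inequality is equivalent to the sharpness of the Carleson embedding constant for the \emph{specific} Carleson sequence attached to the maximal function, and on the dyadic lattice the extremizers constructed in \cite{M} for the Carleson Embedding Theorem are exactly of maximal-function type. So the strategy is: take the near-extremal pairs $(f, \{\alpha_{\ci{I}}\})$ on the dyadic lattice that nearly saturate \eqref{eq1} with $C = 1$; observe the relevant $\alpha_{\ci{I}}$ are, up to the construction, the "maximal-function weights"; then transport this example to the general filtration using the Haar-type functions $h$ furnished by Definition \ref{Def2}, exactly as in the proof of the sharpness half of Theorem \ref{THM A}. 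This produces functions $f$ on $(\mathcal{X},\mathcal{F},\mu)$ with $\mathbb{E}^\mu[(f^*)^p]/\mathbb{E}^\mu[|f|^p]$ arbitrarily close to $(p')^p$.

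The main obstacle is the passage from "sharpness of the Carleson embedding constant" to "sharpness of the Doob constant": a priori the extremal sequence $\{\alpha_{\ci{I}}\}$ realizing the embedding constant need not be the one coming from a maximal function, so one cannot immediately conclude. I expect to resolve this by directly building the extremizer: fix a large $N$ and a target level, and construct $f$ on the dyadic lattice so that the martingale $f_n$ climbs geometrically at rate tuned by $p'$ along a tree of branching sets whose measures decay correspondingly — this is the classical construction showing $\|f^*\|_p \to p'\|f\|_p$ on $[0,1)$ — and verify that its maximal function computation matches the Bellman-function value. Then apply the remodeling: for each dyadic splitting, use the random variable $h$ from the infinitely refining hypothesis to split a set $E \in \mathcal{F}_n$ into two pieces of prescribed measures at level $\mathcal{F}_k$, with the conditional expectation drift controlled by condition (ii) $\int_E |h_n|\,d\mu \leq \varepsilon$ of Definition \ref{Def2}; iterating gives a function on $\mathcal{X}$ whose martingale mimics the dyadic extremizer up to an $\varepsilon$-error, and letting $\varepsilon \to 0$ and $N \to \infty$ finishes the proof.

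\begin{remark}
One could alternatively define a separate Bellman function for Doob's inequality and prove its coincidence across filtrations by the same remodeling argument as in Theorem \ref{THM A}; the two approaches are essentially equivalent, since the maximal-function Bellman function is recovered from $\mathcal{B}(F,\textbf{\textup{f}},M;1)$ by optimizing over $M$.
\end{remark}
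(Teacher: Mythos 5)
Your upper-bound argument is essentially the paper's: for $f \ge 0$ define $E_n = \{x : n \text{ is the first index with } f^*(x) = f_n(x)\}$, set $\alpha_n = \mathbb{E}^\mu[\textbf{1}_{\ci{E_n}}\mid\mathcal{F}_n]$, check this is a Carleson sequence with $C=1$, and observe $\mathbb{E}^\mu[(f^*)^p] = \mathbb{E}^\mu\bigl[\sum_n \alpha_n f_n^p\bigr]$, so the bound follows from Theorem \ref{THM3}. Your description (``new record sets, weight equal to the measure of that set'') is imprecise --- the weight must be the $\mathcal{F}_n$-\emph{measurable random variable} $\mathbb{E}^\mu[\textbf{1}_{\ci{E_n}}\mid\mathcal{F}_n]$, not a scalar --- but the idea is right.

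On sharpness you correctly locate the central difficulty, but your proposed resolution has a real gap. You want to build the classical dyadic Doob extremizer and then assert that after remodeling ``the martingale mimics the dyadic extremizer up to an $\varepsilon$-error.'' The remodeling only controls \emph{averages}: it guarantees $\langle\widetilde{f}\rangle_{\ci{\mathcal{X}_j^k,\mu}} \approx \langle f\rangle_{\ci{I_j^k}}$ (Proposition \ref{P5}(iv)), but the conditional expectation $\widetilde{f}_{n_k}$ need not be approximately constant on $\mathcal{X}_j^k$, because $\mathcal{X}_j^k$ is generally not an atom of $\mathcal{F}_{n_k}$ --- the $\sigma$-field $\mathcal{F}_{n_k}$ may refine $\mathcal{X}_j^k$ substantially. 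Consequently it is not automatic that $\widetilde{f}^*$ picks up the value $\approx \langle f\rangle_{\ci{I_j^k}}$ on most of $\mathcal{X}_j^k$; the martingale $\widetilde{f}_{n_k}$ could in principle be small on most of $\mathcal{X}_j^k$ and large on a thin subset while still having the right average. To close this you would need a pointwise estimate like the paper's Lemma establishing \eqref{eq15}, which by an inductive argument on the Haar-type splittings controls $\widetilde{f}_{n_k}\textbf{1}_{\ci{\mathcal{X}_j^k}}$ in terms of $\langle\widetilde{f}\rangle_{\ci{\mathcal{X}_j^k,\mu}}$, followed by a Chebyshev-type argument. None of this appears in your sketch.

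The paper avoids the extremizer construction entirely and takes a cleverer route: it proves the stronger Bellman-function identity $\widetilde{\mathcal{B}}_{\mu}^{\mathcal{F}}(F,\textbf{\textup{f}}) = \mathcal{B}_{\mu}^{\mathcal{F}}(F,\textbf{\textup{f}},1;1)$ directly, using the pointwise bound \eqref{eq15} together with a covering lemma of Melas (Lemma \ref{l}): given an \emph{arbitrary} near-extremal Carleson sequence $\{\alpha_{\ci{I_j^k}}\}$, one can choose pairwise disjoint $\mathcal{A}_j^k \subseteq \mathcal{X}_j^k$ with $\alpha_{\ci{I_j^k}}$ proportional to $\mu(\mathcal{A}_j^k)$, so that $\sum_n \alpha_n |\widetilde{f}_n|^p$ is controlled by $\mathbb{E}^\mu[|\widetilde{f}^*|^p]$ after integrating over the disjoint family. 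This eliminates the worry you flagged about whether the embedding extremizer is of maximal-function type. You should adopt this structure, or else carry out the pointwise control you are currently assuming. Finally, a small correction to your closing remark: the Doob Bellman function is obtained from $\mathcal{B}_{\mu}^{\mathcal{F}}$ by \emph{setting} $M=1$ (Theorem \ref{THM B}), not by optimizing over $M$.
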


\par The study of the $L^p$-norm of the maximal function was initiated from the celebrated Doob's martingale inequality, e.g. in \cite{D}. The sharpness of this inequality was shown in \cite{B1} and \cite{B2} if one looks at all martingales. For particular martingales including the dyadic case, see \cite{M} and \cite{W}. Theorem \ref{THM4} covers all these results. 

\par  Assuming $f\geq 0$, we define the Bellman function $\widetilde{\mathcal{B}}_{\mu}^{\mathcal{F}}(F,\textbf{f})$ associated to the Doob's martingale inequality by

\begin{align} \label{eq 102}
\widetilde{\mathcal{B}}_{\mu}^{\mathcal{F}}(F,\textbf{f})=\sup\left\{\mathbb{E}^\mu\left[|f^*|^p\right]:\mathbb{E}^\mu[f^p]=F, ~ \mathbb{E}^\mu[f]=\textbf{f}\right\}.
\end{align}

\par The connection between the Carleson Embedding Theorem and the maximal theory has been known and exploited a lot, e.g. in \cite{NT} and \cite{M}. Using this connection, we give a proof of the second main theorem in this paper.

\begin{theorem}[The Bellman function of the maximal operators]\label{THM B}

\begin{align}\label{eq 1001}
\widetilde{\mathcal{B}}_{\mu}^{\mathcal{F}}(F,\textbf{\textup{f}}) \leq \mathcal{B}_{\mu}^{\mathcal{F}}(F,\textbf{\textup{f}},1;C=1).
\end{align}

Moreover, if $\{\mathcal{F}_n\}_{n\geq0}$ is an infinitely refining filtration, then 

\begin{align}\label{eq 1002}
\widetilde{\mathcal{B}}_{\mu}^{\mathcal{F}}(F,\textbf{\textup{f}}) = \mathcal{B}_{\mu}^{\mathcal{F}}(F,\textbf{\textup{f}},1;C=1).
\end{align}
\end{theorem}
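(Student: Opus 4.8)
The plan is to prove the two inequalities of the theorem directly, by explicit constructions on the given space; as a byproduct this shows the infinitely refining hypothesis is in fact not needed for either one.

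For \eqref{eq 1001}, given an admissible $f\ge 0$ (so $\mathbb{E}^\mu[f^p]=F$, $\mathbb{E}^\mu[f]=\textbf{\textup{f}}$, whence $f_0=\textbf{\textup{f}}$), I would build a Carleson sequence out of the running maximum of the martingale $(f_n)$. Let $m_n=\max_{0\le k\le n}f_k$, let $J_n=\{f_n>m_{n-1}\}\in\mathcal{F}_n$ for $n\ge 1$ and $J_0=\mathcal{X}$, fix a truncation level $N$, and put
\[
\alpha_n=\mathbf{1}_{J_n}\cdot\mathbb{E}^\mu\Big[\mathbf{1}_{\{m_N=f_n\}}\,\Big|\,\mathcal{F}_n\Big]\quad(0\le n\le N),\qquad \alpha_n=0\quad(n>N).
\]
Since $\mathbf{1}_{J_n}$ and $f_n^p$ are $\mathcal{F}_n$-measurable we have $\alpha_n=\mathbb{E}^\mu[\mathbf{1}_{J_n}\mathbf{1}_{\{m_N=f_n\}}\mid\mathcal{F}_n]$, so by the tower property
\begin{align*}
\mathbb{E}^\mu\Big[\sum\nolimits_{k\ge n}\alpha_k\,\Big|\,\mathcal{F}_n\Big]&=\mathbb{E}^\mu\Big[\sum\nolimits_{k\ge n}\mathbf{1}_{J_k}\mathbf{1}_{\{m_N=f_k\}}\,\Big|\,\mathcal{F}_n\Big],\\
\mathbb{E}^\mu\Big[\sum\nolimits_{k}\alpha_k f_k^p\Big]&=\mathbb{E}^\mu\Big[\sum\nolimits_{k}\mathbf{1}_{J_k}\mathbf{1}_{\{m_N=f_k\}}f_k^p\Big].
\end{align*}
The point I would then invoke is the pathwise identity $\sum_{k=0}^N\mathbf{1}_{J_k}\mathbf{1}_{\{m_N=f_k\}}=1$: over the running-record times $k$ (those with $\mathbf{1}_{J_k}=1$) the values $f_k$ are strictly increasing, so $m_N$ is attained at exactly one of them, namely at the first time the overall maximum occurs. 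This yields simultaneously that $\{\alpha_n\}$ is a Carleson sequence with $C=1$, that $\mathbb{E}^\mu[\sum_n\alpha_n]=1$, and that $\mathbb{E}^\mu[\sum_n\alpha_n f_n^p]=\mathbb{E}^\mu[m_N^p]$; hence $(f,\{\alpha_n\})$ is an admissible pair for $\mathcal{B}_{\mu}^{\mathcal{F}}(F,\textbf{\textup{f}},1;1)$, so $\mathbb{E}^\mu[m_N^p]\le\mathcal{B}_{\mu}^{\mathcal{F}}(F,\textbf{\textup{f}},1;1)$. Letting $N\to\infty$ (monotone convergence, $m_N\uparrow f^*$) and taking the supremum over $f$ gives \eqref{eq 1001}.

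For \eqref{eq 1002} it suffices, by \eqref{eq 1001}, to prove the reverse bound $\mathcal{B}_{\mu}^{\mathcal{F}}(F,\textbf{\textup{f}},1;1)\le\widetilde{\mathcal{B}}_{\mu}^{\mathcal{F}}(F,\textbf{\textup{f}})$, and I would obtain it by summation by parts. Let $(g,\{\alpha_n\})$ be an admissible pair for $\mathcal{B}_{\mu}^{\mathcal{F}}(F,\textbf{\textup{f}},1;1)$, and set $m'_n=\max_{0\le k\le n}g_k$; since $0\le g_n\le m'_n$ we have $\sum_n\alpha_n g_n^p\le\sum_n\alpha_n(m'_n)^p$ pointwise. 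With the partial tails $S_n=\sum_{k=n}^N\alpha_k$,
\[
\sum_{n=0}^N\alpha_n(m'_n)^p=S_0(m'_0)^p+\sum_{n=1}^N S_n\big((m'_n)^p-(m'_{n-1})^p\big);
\]
taking $\mathbb{E}^\mu$, and using that $(m'_n)^p-(m'_{n-1})^p\ge 0$ is $\mathcal{F}_n$-measurable, that $\mathbb{E}^\mu[S_n\mid\mathcal{F}_n]\le\mathbb{E}^\mu[\sum_{k\ge n}\alpha_k\mid\mathcal{F}_n]\le C=1$, and that $m'_0=\textbf{\textup{f}}$, one gets $\mathbb{E}^\mu[\sum_{n=0}^N\alpha_n(m'_n)^p]\le\mathbb{E}^\mu[(m'_N)^p]$. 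Letting $N\to\infty$ ($m'_N\uparrow g^*$) then gives $\mathbb{E}^\mu[\sum_n\alpha_n g_n^p]\le\mathbb{E}^\mu[(g^*)^p]\le\widetilde{\mathcal{B}}_{\mu}^{\mathcal{F}}(F,\textbf{\textup{f}})$, the last inequality because $g$ has the prescribed moments; taking the supremum over $(g,\{\alpha_n\})$ finishes \eqref{eq 1002}. Since this argument uses only $\mathbb{E}^\mu[\sum_{k\ge n}\alpha_k\mid\mathcal{F}_n]\le 1$, the equality \eqref{eq 1002} in fact holds for \emph{every} discrete-time filtered probability space.

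The only step that is not bookkeeping is the choice of weight in \eqref{eq 1001} — the conditional probability that the running record at time $n$ is the decisive one, i.e. the one attaining $m_N$ — after which everything rests on the tower property, the counting identity $\sum_k\mathbf{1}_{J_k}\mathbf{1}_{\{m_N=f_k\}}=1$, and summation by parts. The remaining ingredients are routine: the passage from $m_N$ to $f^*$ (and from $m'_N$ to $g^*$) is monotone convergence; finiteness of the right-hand sides is Doob's inequality (Theorem \ref{THM4}), so that $g^*\in L^p$; and the two suprema are taken over the same class of admissible $f$ (the trivial sequence $\alpha_0=1$, $\alpha_n=0$ for $n\ge 1$ is always Carleson with $C=1$ and total mass $1$), so no empty-supremum issue arises.
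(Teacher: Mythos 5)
Your argument for \eqref{eq 1001} is essentially the same construction as the paper's: the weight $\alpha_n=\mathbf{1}_{J_n}\,\mathbb{E}^\mu[\mathbf{1}_{\{m_N=f_n\}}\mid\mathcal{F}_n]$ is exactly $\mathbb{E}^\mu[\mathbf{1}_{E_n^{(N)}}\mid\mathcal{F}_n]$, where $E_n^{(N)}$ is the event that $n$ is the first time the running maximum $m_N$ is attained; the paper uses the untruncated set $E_n=\{n~\text{smallest with}~f^*=|f_n|\}$ and asserts that $\{E_n\}_{n\ge 0}$ partitions $\mathcal{X}$. Your truncated version is actually a little cleaner, since it sidesteps the question of whether the supremum $f^*$ is attained pathwise (the monotone passage $m_N\uparrow f^*$ handles the limit); the two arguments are otherwise identical.

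For \eqref{eq 1002} you take a genuinely different route, and it is both correct and stronger than what the paper does. The paper proves $\mathcal{B}_{\mu}^{\mathcal{F}}(F,\mathbf{f},1)\le\widetilde{\mathcal{B}}_{\mu}^{\mathcal{F}}(F,\mathbf{f})$ by appealing to Theorem \ref{THM A} (i.e.\ the equality $\mathcal{B}_{\mu}^{\mathcal{F}}=\mathcal{B}$), and then running the section 5 remodeling together with Melas' covering Lemma \ref{l} to bound $\mathbb{E}^\mu\bigl[\sum_n\alpha_n|\widetilde{f}_n|^p\bigr]$ by $\frac{(1+\varepsilon)^{pN}}{(1-\varepsilon)^{(p+1)N}}\mathbb{E}^\mu\bigl[|\widetilde{f}^*|^p\bigr]$; this machinery is precisely what forces the infinitely refining hypothesis into Theorem \ref{THM B}. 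Your argument instead bounds $\mathbb{E}^\mu\bigl[\sum_n\alpha_ng_n^p\bigr]$ by $\mathbb{E}^\mu\bigl[(g^*)^p\bigr]$ directly via Abel summation: pointwise domination by $\sum\alpha_n(m'_n)^p$, summation by parts with the tails $S_n$, and the Carleson bound $\mathbb{E}^\mu[S_n\mid\mathcal{F}_n]\le 1$ applied against the nonnegative $\mathcal{F}_n$-measurable increments $(m'_n)^p-(m'_{n-1})^p$. (The step $\mathbb{E}^\mu[S_n\Delta_n]=\mathbb{E}^\mu\bigl[\mathbb{E}^\mu[S_n\mid\mathcal{F}_n]\Delta_n\bigr]\le\mathbb{E}^\mu[\Delta_n]$ is exactly right because $S_n$ is not $\mathcal{F}_n$-measurable but $\Delta_n$ is.) This is shorter, uses none of the remodeling, and — as you note — establishes the identity $\widetilde{\mathcal{B}}_{\mu}^{\mathcal{F}}(F,\mathbf{f})=\mathcal{B}_{\mu}^{\mathcal{F}}(F,\mathbf{f},1;C=1)$ for an arbitrary discrete-time filtration, so the ``infinitely refining'' hypothesis in Theorem \ref{THM B} is genuinely superfluous. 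What the paper's approach buys in exchange is that it exhibits the connection to the dyadic Bellman function $\mathcal{B}$ concretely through the remodeling, which fits the narrative of the paper; but as a proof of the stated theorem, your summation-by-parts argument is more elementary and yields a sharper result.
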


The paper is organized as follows. In section 2, we list some properties of the Bellman function $\mathcal{B}_{\mu}^{\mathcal{F}}(F,\textbf{f},M;C)$ associated to Theorem \ref{THM3} and define the Burkh\"{o}lder's hull. In section 3, we discuss the main inequality of $\mathcal{B}(F,\textbf{f},M;C)$ associated to the dyadic Carleson Embedding Theorem \ref{THM1} and the super-solutions. In section 4, we find explicitly the Burkh\"{o}lder's hull and thus a super-solution of the Theorem \ref{THM1}. In section 5, we give a remodeling of the Bellman function $\mathcal{B}(F,\textbf{f},M;C=1)$ for an infinitely refining filtration, which is central to the rest of the paper. In section 6, we prove the first main Theorem \ref{THM A}. In section 7, we prove the second main Theorem \ref{THM B}.

\section{Properties of $\mathcal{B}_{\mu}^{\mathcal{F}}(F,\textbf{f},M;C)$ and the Burkh\"{o}lder's hull}

\subsection{Properties of $\mathcal{B}_{\mu}^{\mathcal{F}}(F,\textbf{f},M,C)$} 
\par We list some properties of $\mathcal{B}_{\mu}^{\mathcal{F}}(F,\textbf{f},M;C)$. 

\begin{proposition}[Properties of the Bellman function $\mathcal{B}_{\mu}^{\mathcal{F}}(F,\textbf{f},M;C)$]\label{P1}
\end{proposition}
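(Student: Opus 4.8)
The statement to prove is Proposition \ref{P1}, a list of properties of the Bellman function $\mathcal{B}_{\mu}^{\mathcal{F}}(F,\mathbf{f},M;C)$. Although the exact list is not spelled out in the excerpt, the standard package of such properties — as used in the Bellman-function literature (\cite{NT}, \cite{NTV}) and needed for the arguments sketched in the introduction — consists of: (a) the natural domain of definition, namely $\mathbf{f}^p \le F$ (from Jensen/Hölder) together with $0 \le M \le C$ (from the Carleson condition (iv) averaged over $\mathcal{X}$); (b) homogeneity: $\mathcal{B}_{\mu}^{\mathcal{F}}(\lambda^p F, \lambda \mathbf{f}, M; C) = \lambda^p \,\mathcal{B}_{\mu}^{\mathcal{F}}(F,\mathbf{f},M;C)$ for $\lambda>0$, and the scaling in the Carleson constant $\mathcal{B}_{\mu}^{\mathcal{F}}(F,\mathbf{f},\tau M; \tau C) = \tau\cdot(\text{something})$ — more precisely a linear comparison in $(M,C)$; (c) a crude two-sided bound, e.g. $\mathbf{f}^p M \le \mathcal{B}_{\mu}^{\mathcal{F}}(F,\mathbf{f},M;C) \le C_p\,C\,F$, the upper bound being exactly Theorem \ref{THM3}; and (d) the \emph{main inequality} (dynamic-programming / concavity-type inequality) obtained by splitting off the first generation $\mathcal{F}_1$.

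The plan is to establish each item directly from the definition \eqref{eq8} as a supremum over admissible pairs $(f,\{\alpha_n\})$. For the domain and the crude bounds, I would simply feed the constraints (i)–(iv) into elementary inequalities: Jensen gives $\mathbf{f}^p = (\mathbb{E}^\mu f)^p \le \mathbb{E}^\mu f^p = F$; taking $n=0$ in \eqref{eq6} and using $\mathcal{F}_0=\{\emptyset,\mathcal{X}\}$ gives $\mathbb{E}^\mu[\sum_n \alpha_n] \le C$, i.e. $0 \le M \le C$; the lower bound comes from keeping only the $n=0$ term, $\mathbb{E}^\mu[\sum_n \alpha_n f_n^p] \ge \alpha_0 f_0^p = M\mathbf{f}^p$; the upper bound $\le C_pC F$ is Theorem \ref{THM3}, which I am allowed to assume. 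Homogeneity in $(F,\mathbf{f})$ follows by the substitution $f \mapsto \lambda f$, which multiplies $F$ by $\lambda^p$, $\mathbf{f}$ by $\lambda$, and the objective by $\lambda^p$, while leaving $M$, $C$ and the Carleson condition untouched; the scaling relating $(M,C)$ to $(\tau M,\tau C)$ follows from $\{\alpha_n\} \mapsto \{\tau\alpha_n\}$.

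The substantive item is the main inequality. Here I would fix a set $E\in\mathcal{F}_1$ (or partition $\mathcal{X}$ into the atoms / a measurable partition generating $\mathcal{F}_1$), write $\mu_j = \mu(E_j)$, and let $F_j = \langle f^p\rangle_{E_j,\mu}$, $\mathbf{f}_j = \langle f\rangle_{E_j,\mu}$, $M_j = \mu_j^{-1}\mathbb{E}^\mu[\mathbf{1}_{E_j}\sum_{n\ge 1}\alpha_n]$ be the conditional data on each piece. Splitting the sum $\sum_{n\ge0}\alpha_n f_n^p = \alpha_0 f_0^p + \sum_{n\ge 1}\alpha_n f_n^p$, the first term is $\alpha_0 \mathbf{f}^p$ with $\alpha_0 = M - \sum_j \mu_j M_j$, and on each $E_j$ the restricted filtration $\{\mathcal{F}_n|_{E_j}\}_{n\ge1}$ with the restricted data realizes a configuration for the Bellman function with parameters $(F_j,\mathbf{f}_j,M_j;C)$ — one must check the Carleson condition survives restriction, which it does since conditioning on a larger $\sigma$-field only makes \eqref{eq6} easier, and that $\alpha_0\ge 0$, i.e. $\sum_j\mu_j M_j \le M$, which is again \eqref{eq6} at $n=0$. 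Taking suprema over each piece and using $\sum_j \mu_j \mathbf{f}_j = \mathbf{f}$, $\sum_j\mu_j F_j = F$ yields
\[
\mathcal{B}_{\mu}^{\mathcal{F}}(F,\mathbf{f},M;C) \ge \Big(M - \sum_j \mu_j M_j\Big)\mathbf{f}^p + \sum_j \mu_j\, \mathcal{B}_{\mu}^{\mathcal{F}}(F_j,\mathbf{f}_j,M_j;C),
\]
valid whenever the $(F_j,\mathbf{f}_j,M_j)$ average correctly; the reverse inequality (that this supremum over splittings actually equals $\mathcal{B}_{\mu}^{\mathcal{F}}$) is obtained by gluing near-optimal configurations on the pieces, which is legitimate because the filtration below level $1$ on $\mathcal{X}$ is precisely the disjoint union of the restricted filtrations. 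The main obstacle, and the step that needs the most care, is this gluing/measurability argument: one must verify that an arbitrary admissible pair decomposes as above with the Carleson constant preserved on each atom, and conversely that independent choices on the atoms paste into a single globally admissible pair — in particular that the global Carleson condition \eqref{eq6} at level $0$ reduces exactly to the budget constraint $\alpha_0 + \sum_j \mu_j M_j = M \le C$ together with the per-atom conditions. I would carry this out by unwinding \eqref{eq6} through the tower property of conditional expectation, after which everything reduces to the elementary identities above.
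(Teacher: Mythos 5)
Your derivations of the domain (Jensen gives $\textbf{f}^p\leq F$; the Carleson condition at $n=0$ gives $M\leq C$), the upper bound $\mathcal{B}_\mu^{\mathcal{F}}\leq C_p\cdot C\cdot F$ from Theorem \ref{THM3}, the homogeneity via $f\mapsto tf$, and the scaling $\mathcal{B}_\mu^{\mathcal{F}}(F,\textbf{f},M;C)=C\cdot\mathcal{B}_\mu^{\mathcal{F}}(F,\textbf{f},M/C;1)$ via $\alpha_n\mapsto\alpha_n/C$ all coincide with the paper's (one-line) arguments. One minor slip: $\alpha_0 f_0^p=M\textbf{f}^p$ is false for an arbitrary admissible pair, since $\alpha_0$ is only one summand of $M=\mathbb{E}^\mu[\sum_n\alpha_n]$; what is true is that testing the definition on the specific configuration $\alpha_0=M$, $\alpha_n=0$ ($n\geq1$) yields $\mathcal{B}_\mu^{\mathcal{F}}\geq M\textbf{f}^p$. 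The paper only records the weaker lower bound $0$.

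However, you have misidentified the last item of the proposition. It is \emph{not} the dynamic-programming/concavity inequality obtained by splitting over $\mathcal{F}_1$; the paper reserves that main inequality for the dyadic Bellman function $\mathcal{B}$ in Proposition \ref{P3} and explicitly remarks that it is much stronger than Proposition \ref{P1} (v) and specifically associated to $\mathcal{B}$, not to $\mathcal{B}_\mu^{\mathcal{F}}$. The actual property (v) is the far weaker statement
\begin{align*}
\mathcal{B}_\mu^{\mathcal{F}}(F,\textbf{f},M;C)\geq\mathcal{B}_\mu^{\mathcal{F}}(F,\textbf{f},M-\Delta M;C)+\Delta M\cdot\textbf{f}^p,\qquad 0\leq\Delta M\leq M,
\end{align*}
whose proof is a one-liner: take an almost-optimal pair for budget $M-\Delta M$ and replace $\alpha_0$ by $\alpha_0+\Delta M$. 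The Carleson condition \eqref{eq6} survives because $M\leq C$, and the objective gains exactly $\Delta M\cdot\textbf{f}^p$ since $f_0\equiv\textbf{f}$. Your elaborate $\mathcal{F}_1$-decomposition is not needed for this, and, more importantly, the gluing step you yourself flag as needing the most care is exactly the step that fails on a general filtered probability space: in the dyadic or tree setting $\mathcal{F}_1$ has finitely many atoms of known mass, so configurations on the halves paste together freely, but a general $\mathcal{F}_1$ can be nonatomic, and one cannot independently prescribe near-optimal data ``on each piece'' and reassemble a single admissible martingale. That obstruction is precisely why the paper does not assert a main inequality for $\mathcal{B}_\mu^{\mathcal{F}}$ and instead proves $\mathcal{B}_\mu^{\mathcal{F}}\leq\mathcal{B}$ by a direct comparison with the dyadic Bellman function, reserving the splitting argument for $\mathcal{B}$ alone.
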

\begin{enumerate}
\item Domain: $\textbf{f}^p\leq F$ and $0\leq M\leq C$. 
\item Range: $0\leq \mathcal{B}_{\mu}^{\mathcal{F}}(F,\textbf{f},M;C)\leq C_p\cdot C\cdot F$. 
\item Homogeneity: $\mathcal{B}_{\mu}^{\mathcal{F}}(t^pF,t\textbf{f},M;C)
=t^p\cdot\mathcal{B}_{\mu}^{\mathcal{F}}(F,\textbf{f},M;C)$ for all $t\geq 0$.
\item Scaling Property: $\mathcal{B}_{\mu}^{\mathcal{F}}(F,\textbf{f},M;C)=C\cdot\mathcal{B}_{\mu}^{\mathcal{F}}(F,\textbf{f},\frac{M}{C};1)$.
\item $\mathcal{B}_{\mu}^{\mathcal{F}}(F,\textbf{f},M;C)\geq\mathcal{B}_{\mu}^{\mathcal{F}}(F,\textbf{f},M-\Delta M;C)+\Delta M\cdot\textbf{f}^p$ for $0\leq \Delta M\leq M$. \\
In particular, $\mathcal{B}_{\mu}^{\mathcal{F}}(F,\textbf{f},M;C)$ is increasing in $M$.
\end{enumerate}

\begin{proof}
(i) follows from the H\"{o}lder's inequality and that $\{\alpha_n\}_{n\geq0}$ is a Carleson sequence. (ii) holds if we assume Theorem \ref{THM3} is true. (iii) and (iv) are obtained directly from definition (\ref{eq8}). We explain (v) in more detail. Choose $f\geq0$ and $\{\alpha_n\}_{n\geq 0}$ that almost give the supremum in the definition (\ref{eq8}), i.e. for small $\varepsilon>0$,
$$\mathbb{E}^\mu\left[\sum_{n\geq 0}\alpha_nf_n^p\right]\geq\mathcal{B}_{\mu}^{\mathcal{F}}(F,\textbf{f},M-\Delta M;C)-\varepsilon,$$
where $\mathbb{E}^\mu\left[f^p\right]=F$, $\mathbb{E}^\mu\left[f\right]=\textbf{f}$, $\mathbb{E}^\mu\left[\sum_{n\geq 0}\alpha_n\right]=M-\Delta M$ and $\mathbb{E}^\mu\left[\sum_{k\geq n}\alpha_k|\mathcal{F}_n\right]\leq C$ for every $n\geq 0$. Since $0\leq M\leq C$, if we increase $\alpha_0$ to $\alpha_0+\Delta M$ then everything is retained except we have now $\mathbb{E}^\mu\left[\sum_{n\geq 0}\alpha_n\right]=M$ and
$$\mathbb{E}^\mu\left[\sum_{n\geq 0}\alpha_nf_n^p\right]\geq\mathcal{B}_{\mu}^{\mathcal{F}}(F,\textbf{f},M-\Delta M;C)-\varepsilon+\Delta M\cdot\textbf{f}^p.$$
Letting $\varepsilon\rightarrow 0$, we obtain $\mathcal{B}_{\mu}^{\mathcal{F}}(F,\textbf{f},M;C)\geq\mathcal{B}_{\mu}^{\mathcal{F}}(F,\textbf{f},M-\Delta M;C)+\Delta M\cdot\textbf{f}^p$.
\end{proof}

\begin{remark}
From (\ref{eq8}), we know that $ \mathcal{B}_{\mu}^{\mathcal{F}}(F,\textbf{f},M;C)$ exists and $0\leq \mathcal{B}_{\mu}^{\mathcal{F}}(F,\textbf{f},M;C)\leq C_p\cdot C\cdot F$ if and only if Theorem \ref{THM3} is true. The sharpness is explained as
\begin{align}\label{eq 103}
\sup_{\textbf{f}^p\leq F, ~0\leq M\leq C} \frac{\mathcal{B}_{\mu}^{\mathcal{F}}(F,\textbf{f},M;C)}{C\cdot F}=(p')^p.
\end{align}
\end{remark}

\begin{remark}\label{R1}
By (iv) of Proposition \ref{P1}, it suffices to consider only $C=1$ for the Bellman function. To suppress notation, we denote $\mathcal{B}_{\mu}^{\mathcal{F}}(F,\textbf{f},M)=\mathcal{B}_{\mu}^{\mathcal{F}}(F,\textbf{f},M;C=1)$.
\end{remark}

\subsection{The Burkh\"{o}lder's hull} Assume that $\mathbb{B}_{\mu}^{\mathcal{F}}(F,\textbf{f},M;C)$ is any function satisfying Proposition \ref{P1}. The domain is $\textbf{\textup{f}}^p\leq F, ~0\leq M\leq C$ and the range is $0\leq \mathbb{B}_{\mu}^{\mathcal{F}}(F,\textbf{f},M)\leq C_p\cdot C\cdot F$. We define the \emph{Burkh\"{o}lder's hull} of $ \mathbb{B}_{\mu}^{\mathcal{F}}(F,\textbf{f},M;C)$ by

\begin{align}\label{eq 104}
u_{\ci{C_p}}^{\mathcal{F}}(\textbf{f}, M;C)=\sup_F\left\{\mathbb{B}_{\mu}^{\mathcal{F}}(F,\textbf{f},M;C)-C_p\cdot C\cdot F\right\},~ \textbf{f}\geq 0,~ 0\leq M\leq C.
\end{align}

\begin{proposition}\label{P2} The {Burkh\"{o}lder's hull} $u_{\ci{C_p}}^{\mathcal{F}}(\textbf{\textup{f}}, M;C)$ satisfies
\begin{enumerate}
\item$-C_p\cdot C\cdot\textbf{\textup{f}}^p\leq u_{\ci{C_p}}^\mathcal{F}(\textbf{\textup{f}}, M;C)\leq 0$; 
\item $u_{\ci{C_p}}^\mathcal{F}(t\textbf{\textup{f}}, M;C)=t^p\cdot u_{\ci{C_p}}^\mathcal{F}(\textbf{\textup{f}}, M;C)$ for all $t\geq 0$.
\item $u_{\ci{C_p}}^\mathcal{F}(\textbf{\textup{f}}, M;C)=C\cdot u_{\ci{C_p}}^\mathcal{F}(\textbf{\textup{f}}, \frac{M}{C};1)$
\end{enumerate}
\end{proposition}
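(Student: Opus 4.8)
The plan is to derive each of the three properties of the Burkh\"older's hull $u_{\ci{C_p}}^{\mathcal{F}}(\textbf{f},M;C)$ directly from the corresponding properties of $\mathbb{B}_{\mu}^{\mathcal{F}}(F,\textbf{f},M;C)$ listed in Proposition \ref{P1}, simply by tracking how those properties interact with the operation $\sup_F\{\,\cdot - C_p\cdot C\cdot F\,\}$ in the definition \eqref{eq 104}. None of these steps should be hard; the only point requiring a little care is making sure the supremum in \eqref{eq 104} is taken over the correct range of $F$ (namely $F\geq\textbf{f}^p$, coming from the domain condition in Proposition \ref{P1}(i)) and that this range transforms correctly under the scalings involved.

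For property (i), the upper bound $u_{\ci{C_p}}^{\mathcal{F}}(\textbf{f},M;C)\leq 0$: for every admissible $F\geq\textbf{f}^p$ we have, by Proposition \ref{P1}(ii), that $\mathbb{B}_{\mu}^{\mathcal{F}}(F,\textbf{f},M;C)\leq C_p\cdot C\cdot F$, hence the quantity inside the supremum is $\leq 0$ for every $F$, and taking the supremum preserves this. For the lower bound $u_{\ci{C_p}}^{\mathcal{F}}(\textbf{f},M;C)\geq -C_p\cdot C\cdot\textbf{f}^p$: it suffices to exhibit one admissible value of $F$ giving a quantity $\geq -C_p\cdot C\cdot\textbf{f}^p$. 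Take $F=\textbf{f}^p$; then since $\mathbb{B}_{\mu}^{\mathcal{F}}\geq 0$ by Proposition \ref{P1}(ii), we get $\mathbb{B}_{\mu}^{\mathcal{F}}(\textbf{f}^p,\textbf{f},M;C)-C_p\cdot C\cdot\textbf{f}^p\geq -C_p\cdot C\cdot\textbf{f}^p$, and the supremum is at least this.

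For property (ii), the homogeneity: starting from the definition,
\begin{align*}
u_{\ci{C_p}}^{\mathcal{F}}(t\textbf{f},M;C)
=\sup_{F\geq t^p\textbf{f}^p}\left\{\mathbb{B}_{\mu}^{\mathcal{F}}(F,t\textbf{f},M;C)-C_p\cdot C\cdot F\right\}.
\end{align*}
Substitute $F=t^p G$ (which is a bijection of $\{F\geq t^p\textbf{f}^p\}$ onto $\{G\geq\textbf{f}^p\}$ when $t>0$; the case $t=0$ is immediate) and apply Proposition \ref{P1}(iii), $\mathbb{B}_{\mu}^{\mathcal{F}}(t^p G,t\textbf{f},M;C)=t^p\cdot\mathbb{B}_{\mu}^{\mathcal{F}}(G,\textbf{f},M;C)$, to pull the factor $t^p$ out of the supremum and land on $t^p\cdot u_{\ci{C_p}}^{\mathcal{F}}(\textbf{f},M;C)$.

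For property (iii), the scaling in the last variable: apply Proposition \ref{P1}(iv), $\mathbb{B}_{\mu}^{\mathcal{F}}(F,\textbf{f},M;C)=C\cdot\mathbb{B}_{\mu}^{\mathcal{F}}(F,\textbf{f},\tfrac{M}{C};1)$, inside the supremum in \eqref{eq 104}, noting that the domain constraint $F\geq\textbf{f}^p$ does not involve $M$ or $C$, so it is unchanged. Then $u_{\ci{C_p}}^{\mathcal{F}}(\textbf{f},M;C)=\sup_{F\geq\textbf{f}^p}\{C\cdot\mathbb{B}_{\mu}^{\mathcal{F}}(F,\textbf{f},\tfrac{M}{C};1)-C_p\cdot C\cdot F\}=C\cdot\sup_{F\geq\textbf{f}^p}\{\mathbb{B}_{\mu}^{\mathcal{F}}(F,\textbf{f},\tfrac{M}{C};1)-C_p\cdot F\}=C\cdot u_{\ci{C_p}}^{\mathcal{F}}(\textbf{f},\tfrac{M}{C};1)$, which is the claim. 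The main (though minor) obstacle throughout is bookkeeping the range of $F$ under the change of variables and confirming the domain constraints are respected; the algebra itself is routine.
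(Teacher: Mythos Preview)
Your proof is correct and follows essentially the same approach as the paper: both derive (i) from the range bounds in Proposition~\ref{P1}(ii) together with the domain constraint $F\geq\textbf{f}^p$, and both obtain (ii) and (iii) by passing the supremum over $F$ through the homogeneity and scaling properties of $\mathbb{B}_{\mu}^{\mathcal{F}}$ from Proposition~\ref{P1}(iii)--(iv). Your write-up simply makes explicit the bookkeeping (the change of variables $F=t^pG$ and the invariance of the constraint $F\geq\textbf{f}^p$) that the paper leaves implicit.
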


\begin{proof}

For (i), note that $-C_p\cdot C\cdot F\leq  \mathbb{B}_{\mu}^{\mathcal{F}}(F,\textbf{f},M)-C_p\cdot C\cdot F\leq 0$ and $\textbf{f}^p\leq F$, taking the supremum in $F$, we have $-C_p\cdot C\cdot\textbf{f}^p\leq u_{\ci{C_p}}^\mathcal{F}(\textbf{f},M;C)\leq 0$. For (ii) and (iii), taking the supreme in $F$ of Proposition \ref{P1} (iii) and (iv), we prove the results.

\end{proof}

\begin{remark}\label{R2}
By (iii) of Proposition \ref{P2}, it suffices to consider only $C=1$ for the Burkh\"{o}lder's hull. To suppress notation, we denote $u_{\ci{C_p}}^\mathcal{F}(\textbf{\textup{f}}, M)=u_{\ci{C_p}}^\mathcal{F}(\textbf{\textup{f}}, M;C=1)$. We will further discuss the Burkh\"{o}lder's hull in section 3 and section 4.
\end{remark}

\section{The main inequality of $\mathcal{B}(F,\textbf{f},M;C)$ and the super-solutions}

\subsection{The main inequality and the super-solutions}

In this and the next section, we restrict ourselves to the Bellman function $\mathcal{B}(F,\textbf{f},M;C)$ of the dyadic Carleson Embedding Theorem \ref{THM1}. In addition to Proposition \ref{P1}, we also have the following crutial property.

\begin{proposition}[The main inequality]\label{P3}
For all triples $(F,\textbf{\textup{f}},M)$ and $(F_\pm,\textbf{\textup{f}}_\pm,M_\pm)$ belong to the domain $\textbf{\textup{f}}^p\leq F$, $0\leq M\leq C$, with $F=\frac{1}{2}(F_++F_-)$, $\textbf{\textup{f}}=\frac{1}{2}(\textbf{\textup{f}}_++\textbf{\textup{f}}_-)$ and $M=\Delta M+\frac{1}{2}  (M_++M_-)$, where $0\leq \Delta M\leq M$, we have

\begin{align}
\mathcal{B}(F,\textbf{\textup{f}},M;C)\geq\frac{1}{2}\left\{\mathcal{B}(F_+,\textbf{\textup{f}}_+,M_+;C)+\mathcal{B}(F_-,\textbf{\textup{f}}_-,M_-;C)\right\}+\Delta M\cdot\textbf{\textup{f}}^p,\label{eq3}
\end{align}

\end{proposition}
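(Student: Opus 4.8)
The plan is to prove the main inequality \eqref{eq3} directly from the definition \eqref{eq2} by a splitting/concatenation argument on the dyadic lattice. Fix an interval $I\in\mathcal{D}$ with children $I_+$ and $I_-$ (say $I_+$ the left half, $I_-$ the right half), so that $|I_\pm|=\tfrac12|I|$. First I would take near-extremizers for the right-hand side: for $\varepsilon>0$, choose functions $f^\pm$ on $I_\pm$ and Carleson sequences $\{\alpha^\pm_{\ci{J}}\}_{\ci{J\subseteq I_\pm}}$ realizing properties (i)--(iv) with data $(F_\pm,\textbf{f}_\pm,M_\pm)$ on $I_\pm$, such that
\[
|I_\pm|^{-1}\sum_{J\subseteq I_\pm}\alpha^\pm_{\ci{J}}\langle f^\pm\rangle_{\ci{J}}^p \;\geq\; \mathcal{B}(F_\pm,\textbf{f}_\pm,M_\pm;C)-\varepsilon.
\]
Then I would glue these into a single function $f$ on $I$ by setting $f|_{I_\pm}=f^\pm$, and define a Carleson family on all $J\subseteq I$ by keeping $\alpha_{\ci{J}}=\alpha^\pm_{\ci{J}}$ for $J\subsetneq I_\pm$ and setting $\alpha_{\ci{I}} = \Delta M\cdot |I|$ for the top interval itself.

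Next I would verify that this glued configuration is admissible for the triple $(F,\textbf{f},M)$. The averaging identities are immediate: $\langle f^p\rangle_{\ci{I}} = \tfrac12(\langle (f^+)^p\rangle_{\ci{I_+}}+\langle (f^-)^p\rangle_{\ci{I_-}}) = \tfrac12(F_++F_-)=F$, and likewise $\langle f\rangle_{\ci{I}}=\textbf{f}$. For the total mass, $|I|^{-1}\sum_{J\subseteq I}\alpha_{\ci{J}} = |I|^{-1}\alpha_{\ci{I}} + \tfrac12\big(|I_+|^{-1}\sum_{J\subseteq I_+}\alpha^+_{\ci{J}}+|I_-|^{-1}\sum_{J\subseteq I_-}\alpha^-_{\ci{J}}\big) = \Delta M + \tfrac12(M_++M_-) = M$. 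The Carleson condition (iv) needs a small check: for $J\subsetneq I_\pm$ it holds by hypothesis on the pieces, while for $J=I$ we need $\sum_{J'\subseteq I}\alpha_{\ci{J'}} = \alpha_{\ci{I}} + \sum_{J'\subseteq I_+}\alpha^+_{\ci{J'}} + \sum_{J'\subseteq I_-}\alpha^-_{\ci{J'}} = \Delta M\,|I| + M_+|I_+| + M_-|I_-| = (\Delta M + \tfrac12 M_+ + \tfrac12 M_-)|I| = M|I| \leq C|I|$, using $M\leq C$.

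Finally I would compute the gain. Since $\langle f\rangle_{\ci{I}}=\textbf{f}$, the contribution of the top interval to the sum in \eqref{eq2} is $|I|^{-1}\alpha_{\ci{I}}\langle f\rangle_{\ci{I}}^p = \Delta M\cdot\textbf{f}^p$, and the remaining terms split exactly as $\tfrac12$ times the two piece-sums. Hence
\[
\mathcal{B}(F,\textbf{f},M;C) \;\geq\; |I|^{-1}\sum_{J\subseteq I}\alpha_{\ci{J}}\langle f\rangle_{\ci{J}}^p \;\geq\; \Delta M\cdot\textbf{f}^p + \tfrac12\big(\mathcal{B}(F_+,\textbf{f}_+,M_+;C)+\mathcal{B}(F_-,\textbf{f}_-,M_-;C)\big) - \varepsilon,
\]
and letting $\varepsilon\to0$ gives \eqref{eq3}. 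The only genuinely delicate point is the bookkeeping in the Carleson condition (iv) for the top interval $I$ — one must be sure that after adding the mass $\Delta M\,|I|$ at the top, the total mass below $I$ is still $\leq C|I|$, which is exactly where the constraint $0\leq\Delta M\leq M$ and $M\leq C$ from the domain of $\mathcal{B}$ enter; everything else is a routine scaling/translation-invariance observation (the value of $\mathcal{B}$ does not depend on $I$, so the pieces can be rescaled to live on $I_\pm$).
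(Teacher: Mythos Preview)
Your proof is correct and follows essentially the same splitting/gluing argument as the paper's own proof; in fact your version is more explicit in verifying that the glued sequence satisfies the Carleson condition (iv), which the paper leaves implicit. The only cosmetic slip is that ``$J\subsetneq I_\pm$'' should read ``$J\subseteq I_\pm$'' (you do treat $J=I_\pm$ correctly in the subsequent computations), but this does not affect the argument.
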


\begin{proof}
\par Split the sum in the definition (\ref{eq2}) of $\mathcal{B}(F,\textbf{f},M)$ into three pieces
$$|I|^{-1}\sum_{J\subseteq I}\alpha_{\ci{J}}\langle f\rangle_{\ci{J}}^p=\frac{1}{2}|I_+|^{-1}\sum_{J\subseteq I_+}\alpha_{\ci{J}}\langle f\rangle_{\ci{J}}^p+\frac{1}{2}|I_-|^{-1}\sum_{J\subseteq I_-}\alpha_{\ci{J}}\langle f\rangle_{\ci{J}}^p+|I|^{-1}\alpha_{\ci{I}}\langle f\rangle_{\ci{I}}^p,$$
where $I_\pm$ means the right and left halves of $I$, respectively.
\par Now, we choose $f^\pm$ on the interval $I_\pm$ that almost give the supremum in the definition (\ref{eq2}) of $\mathcal{B}(F_\pm,\textbf{f}_\pm,M_\pm)$, i.e. for small $\varepsilon>0$,
$$|I_\pm|^{-1}\sum_{J\subseteq I\pm}\alpha_{\ci{J}}\langle f^{\pm}\rangle_{\ci{J}}^p\geq\mathcal{B}(F_\pm,\textbf{f}_\pm,M_\pm;C)-\frac{\varepsilon}{2},$$
and note that $|I|^{-1}\alpha_{\ci{I}}\langle f\rangle_{\ci{I}}^p=\Delta M\cdot\textbf{f}$, we conclude
$$|I|^{-1}\sum_{J\subseteq I}\alpha_{\ci{J}}\langle f\rangle_{\ci{J}}^p\geq\frac{1}{2}\{\mathcal{B}(F_+,\textbf{f}_+,M_+;C)+\mathcal{B}(F_-,\textbf{f}_-,M_-;C)\}-\varepsilon+\Delta M\cdot\textbf{f}^p,$$
which yields exactly (\ref{eq3}).
\end{proof}

\begin{remark}
The main inequality (\ref{eq3}) is much stronger than Proposition \ref{P1} (v) and it is specifically associated to $\mathcal{B}(F,\textbf{f},M;C)$.
\end{remark}
\begin{remark}
\par A function satisfies Propositon \ref{P1} and Proposition \ref{P3} is called a \emph{super-solution}. Such a super-solution needs not to be the exact Bellman function and is much easier to find, see section 4. To distinguish from the exact Bellman function, we denote a super-solution by $\mathbb{B}(F,\textbf{f},M;C)$.
\end{remark}

\par We have seen that the dyadic Carleson Embedding Theorem \ref{THM1} gives rise to a super-solution $\mathbb{B}(F,\textbf{f},M;C)$. On the other hand, to prove (\ref{eq1}) and actually Theorem \ref{THM1}, it suffices to find any super-solution. 
\par Indeed, pick $f\geq 0$ and $\{\alpha_{\ci{I}}\}_{\ci{I\in\mathcal{D}}}$ satisfying the Carleson condition. For every dyadic interval $I\in\mathcal{D}$, let $F_{\ci{I}},\textbf{f}_{\ci{I}},M_{\ci{I}}$ be the corresponding averages
$$F_{\ci{I}}=\langle f^p\rangle_{\ci{I}}, \textbf{f}_{\ci{I}}=\langle f\rangle_{\ci{I}}, M_{\ci{I}}=|I|^{-1}\sum_{J\subseteq I}\alpha_{\ci{J}}.$$
Note that $F_{\ci{I}}=\frac{1}{2}(F_{\ci{I_+}}+F_{\ci{I_-}})$, $\textbf{f}_{\ci{I}}=\frac{1}{2}  (\textbf{f}_{\ci{I_+}}+\textbf{f}_{\ci{I_-}})$ and $M_{\ci{I}}=\Delta M_{\ci{I}}+\frac{1}{2}  (M_{\ci{I_+}}+M_{\ci{I_-}})$, where $0\leq\Delta M_{\ci{I}}=|I|^{-1}\alpha_{\ci{I}}\leq M_{\ci{I}}$. For the interval $I$, the main inequality (\ref{eq3}) implies
$$\alpha_{\ci{I}}\langle f\rangle_{\ci{I}}^p\leq|I|\mathbb{B}(F_{\ci{I}},\textbf{f}_{\ci{I}},M_{\ci{I}};C)-|I_+|\mathbb{B}(F_{\ci{I_+}},\textbf{f}_{\ci{I_+}},M_{\ci{I_+}};C)-|I_-|\mathbb{B}(F_{\ci{I_-}},\textbf{f}_{\ci{I_-}},M_{\ci{I_-}};C).$$
\par Going $n$ levels down, we get the inequality
\begin{align*}
\sum_{J\subseteq I,|J|>2^{-n}|I|}\alpha_{\ci{J}}\langle f\rangle_{\ci{J}}^p
& \leq|I|\mathbb{B}(F_{\ci{I}},\textbf{f}_{\ci{I}},M_{\ci{I}};C)-\sum_{J\subseteq I,|J|=2^{-n}|I|}|J|\mathbb{B}  (F_{\ci{J}},\textbf{f}_{\ci{J}},M_{\ci{J}};C)\\
& \leq|I|\mathbb{B}(F_{\ci{I}},\textbf{f}_{\ci{I}},M_{\ci{I}};C)\leq C_p\cdot C\cdot|I|F_{\ci{I}}=C_p\cdot C\cdot\int_If^p.
\end{align*}
\par Applying the above estimate for the intervals $[-2^n,0)$ and $[0,2^n)$ and taking the limit as $n\rightarrow\infty$, we prove exactly (\ref{eq1}).

\begin{remark}
To prove Theorem \ref{THM1}, all amounts to finding a super-solution $\mathbb{B}(F,\textbf{f},M;C)$. We will see in section 4 that the least possible constant for which $\mathbb{B}(F,\textbf{f},M;C)$ exists is $C_p=(p')^p$.
\end{remark}

\subsection{Further properties of $\mathcal{B}(F,\textbf{f},M;C)$} We start with the following celebrated theorem in convex analysis. We will give a proof for the sake of completeness, for more details, see \cite{NP}.

\begin{theorem}
Let $f:\Omega\rightarrow\mathbb{R}$ be a locally bounded function defined on some convex domain $\Omega\in\mathbb{R}^n$ and $f$ satisfies the midpoint concavity: $f(\frac{x+y}{2})\geq\frac{f(x)+f(y)}{2}$ for all $x,y\in\Omega$. Then $f$ is concave and locally Lipschitz.\label{THM2}
\end{theorem}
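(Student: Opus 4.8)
The plan is to establish the standard fact that midpoint concavity plus local boundedness upgrades to full concavity, following the classical argument in two stages: first pass from midpoint concavity to dyadic-rational concavity, then use local boundedness to fill in all the remaining convex combinations by a limiting argument. The local Lipschitz property will then follow from concavity plus local boundedness by a routine geometric estimate.

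First I would iterate the midpoint inequality. A straightforward induction on $n$ shows that midpoint concavity implies
\begin{align*}
f\left(\frac{1}{2^n}\sum_{j=1}^{2^n} x_j\right) \geq \frac{1}{2^n}\sum_{j=1}^{2^n} f(x_j)
\end{align*}
for any $x_1,\dots,x_{2^n}\in\Omega$; grouping points in pairs and applying midpoint concavity repeatedly does the job. Taking some of the $x_j$ equal, this yields $f(\lambda x + (1-\lambda) y) \geq \lambda f(x) + (1-\lambda) f(y)$ whenever $\lambda$ is a dyadic rational in $[0,1]$ and $x,y\in\Omega$. So $f$ is ``dyadically concave'' on every segment in $\Omega$.

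Next I would promote this to all $\lambda\in[0,1]$ using local boundedness. Fix $x,y\in\Omega$ and consider $g(\lambda) = f(\lambda x + (1-\lambda)y)$ on a slightly larger segment (we may extend a little past the endpoints since $\Omega$ is open and convex — or argue on compact subsegments), so that $g$ is bounded on the relevant interval. For a general $\lambda\in[0,1]$, pick dyadic rationals $\lambda_k\to\lambda$. Dyadic concavity gives, for dyadic $\mu$ near $\lambda_k$, control of $g$ at intermediate points; the key trick is that a midpoint-concave function that is bounded above on an interval but fails to be continuous at an interior point can be shown to be unbounded below near that point — contradicting local boundedness. Concretely: if $g$ were not concave, there would be a point where $g$ lies strictly below the chord; using dyadic concavity to reflect this deficiency and iterate produces values of $g$ tending to $-\infty$ on a bounded set, the desired contradiction. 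This forces $g$ to be concave on $[0,1]$, i.e. $f$ is concave along every segment, hence concave on $\Omega$.

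Finally, for local Lipschitz continuity: given a point $x_0\in\Omega$, choose a closed ball $\overline{B(x_0,2r)}\subseteq\Omega$ on which $f$ is bounded, say $|f|\leq \Lambda$. For $x,y\in B(x_0,r)$ with $x\neq y$, extend the segment from $x$ through $y$ to a point $z$ on the sphere of radius $2r$; writing $y$ as a convex combination of $x$ and $z$ and using concavity gives a one-sided bound $f(y) - f(x) \geq -\frac{2\Lambda}{r}\,|y-x|$, and swapping the roles of $x$ and $y$ gives the matching upper bound, so $|f(y)-f(x)|\leq \frac{2\Lambda}{r}|x-y|$. I expect the main obstacle to be the second step — the pathology-exclusion argument that uses local boundedness to rule out the wildly discontinuous midpoint-concave functions (the ones built from a Hamel basis); the iteration that forces unboundedness below has to be set up carefully, and it is the only place where the hypothesis of local boundedness is genuinely used.
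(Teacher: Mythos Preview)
Your proposal is correct, and the local Lipschitz argument is essentially identical to the paper's. For concavity, however, the paper takes a shorter path that bypasses the dyadic-rational step entirely. It fixes a segment $[a,b]$ where concavity is supposed to fail, sets $\varphi(\lambda)=f(\lambda a+(1-\lambda)b)-\lambda f(a)-(1-\lambda)f(b)$, and notes that $\varphi$ is midpoint concave, vanishes at the endpoints, and (by local boundedness) has a finite infimum $C<0$. Picking $\lambda_0$ with $\varphi(\lambda_0)$ within $\delta<-C/2$ of $C$ and, say, $\lambda_0<\tfrac12$, midpoint concavity at $0,2\lambda_0$ forces $\varphi(\lambda_0)\geq\tfrac12(\varphi(0)+\varphi(2\lambda_0))\geq C/2$, contradicting $\varphi(\lambda_0)<C/2$. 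So the paper goes straight from midpoint concavity to a contradiction in one stroke, whereas you first build up dyadic concavity and then run an iteration-to-$-\infty$ argument that you acknowledge is the delicate part. Your route is the textbook one and is fine, but the paper's defect-function trick is slicker precisely because it avoids the step you flagged as the main obstacle.
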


\begin{proof}
For concavity: If $f$ is not concave, then there exist two points $a,b\in\Omega$, as well as the line segment connecting them $[a,b]=\{\lambda a+(1-\lambda)b:0\leq\lambda\leq1\}\subseteq\Omega$, such that the function $\varphi(\lambda)=f(\lambda a+(1-\lambda)b)-\lambda f(a)-(1-\lambda)f(b)$ verifies
$$-\infty<C=\inf\{\varphi(\lambda):0\leq\lambda\leq1\}<0.$$
Note that we have used $\Omega$ being convex and $f$ being locally bounded here. Furthermore, $\varphi(0)=\varphi(1)=0$ and a direct computation shows that $\varphi$ is also midpoint concave. Take $0<\delta<-\frac{C}{2}$ and let $0\leq\lambda_0\leq1$, such that $\varphi(\lambda_0)\leq C+\delta$, without loss of generality, further assuming $0<\lambda_0<\frac{1}{2}$, hence we have $\varphi(0)=0$ and $\varphi(2\lambda_0)\geq C$, however
$$\varphi(\lambda_0)\leq C+\delta<\frac{C}{2}=\frac{\varphi(0)+\varphi(2\lambda_0)}{2},~\textup{a contradiction!}$$
\par For locally Lipschitz continuity: Given $a\in\Omega$, we can find a ball $B(a,2r)\subseteq\Omega$ on which $f$ is bounded by a constant $M$. For $x\neq y$ in $B(a,2r)$, put $z=y+(\frac{r}{\alpha})(y-x)$, where $\alpha=||y-x||$. Clearly, $z\in B(a,2r)$. Moreover, since $y=\frac{r}{r+\alpha}x+\frac{\alpha}{r+\alpha}z$, from the concavity of $f$ we infer that $f(y)\geq\frac{r}{r+\alpha}f(x)+\frac{\alpha}{r+\alpha}f(z)$. So $|f(y)-f(x)|\leq\frac{\alpha}{r+\alpha}|f(z)-f(x)|\leq\frac{||y-x||}{r}\cdot 2M$.
\end{proof}

\par In the case of our main inequality (\ref{eq3}), first put $F=\frac{1}{2}(F_++F_-)$, $\textbf{f}=\frac{1}{2}(\textbf{f}_++\textbf{f}_-)$ and $M=\frac{1}{2}(M_++M_-)$ (i.e. $\Delta M=0$) and assume all triples $(F,\textbf{f},M)$, $(F_\pm,\textbf{f}_\pm,M_\pm)$ are in the convex domain: $\textbf{f}^p\leq F, 0\leq M\leq C$, then we obtain the midpoint concavity of $\mathcal{B}(F,\textbf{f},M;C)$.
Apply Theorem \ref{THM2} to the function $\mathcal{B}$, so $\mathcal{B}$ is itself concave and locally Lipschitz. In particular, $\mathcal{B}$ is a continuous function.

\par Now let $0\leq \lambda\leq 1$ and $F=\lambda F_+(1-\lambda)F_-$, $\textbf{f}=\lambda\textbf{f}_+(1-\lambda)\textbf{f}_-$, $M=\Delta M+\lambda M_++(1-\lambda)M_-$. Proposition \ref{P1} (v) and concavity of $\mathcal{B}$ imply that

\begin{align*}
\Delta M\cdot\textbf{f}^p
& \leq\mathcal{B}(F,\textbf{f},M;C)-\mathcal{B}(F,\textbf{f},M-\Delta M;C)\\
& \leq\mathcal{B}(F,\textbf{f},M;C)-\left\{\lambda\mathcal{B}(F_+,\textbf{f}_+,M_+;C)+(1-\lambda)\mathcal{B}(F_-,\textbf{f}_-,M_-;C)\right\}.
\end{align*}

Hence, the Bellman function $\mathcal{B}(F,\textbf{f},M)$ is continuous and

\begin{align}\label{eq 110}
\mathcal{B}(F,\textbf{f},M;C)\geq \lambda\mathcal{B}(F_+,\textbf{f}_+,M_+;C)+(1-\lambda)\mathcal{B}(F_-,\textbf{f}_-,M_-;C)+\Delta M\cdot\textbf{f}^p.
\end{align}

\subsection{Regularization of the super-solutions}

\par As we have seen, the Bellman function $\mathcal{B}$ is concave and locally Lipschitz, and thus continuous, but hardly any better than that. Fortunately, we know that the proof of Theorem \ref{THM1} boils down to finding just a super-solution $\mathbb{B}$. We recall the trick of regularization of the super-solutions from \cite{N}.

\par Given a super-solution $\mathbb{B}(F,\textbf{f},M;C)$ satisfying Proposition \ref{P1} and Proposition \ref{P3}. Let $\phi_\varepsilon, \psi_\varepsilon:(0,\infty)\rightarrow[0,\infty)$ be any two nonnegative $C^\infty$ functions, such that supp$(\phi_\varepsilon)\subseteq[1,(1+\varepsilon)^p]$,  supp$(\psi_\varepsilon)\subseteq[1+\varepsilon,1+2\varepsilon]$ and $\int_0^\infty\phi_\varepsilon(t)\frac{dt}{t}=\int_0^\infty\psi_\varepsilon(t)\frac{dt}{t}=1$. Define
\begin{align*}
\mathbb{B}_\varepsilon(F,\textbf{f},M;C)
& = \iiint_{(0,\infty)^3}\mathbb{B}\left(\frac{F}{u},\frac{\textbf{f}}{v},\frac{M}{w};C\right)\phi_\varepsilon(u)\psi_\varepsilon(v)\phi_\varepsilon(w)\frac{dudvdw}{uvw}\\
& = \iiint_{(0,\infty)^3}\mathbb{B}(u,v,w;C)\phi_\varepsilon\left(\frac{F}{u}\right)\psi_\varepsilon\left(\frac{\textbf{f}}{v}\right)\phi_\varepsilon\left(\frac{M}{w}\right)\frac{dudvdw}{uvw}
\end{align*}
Note that the second representation shows $\mathbb{B}_\varepsilon\in C^\infty$. Since $\mathbb{B}$ is continuous, the family of smooth functions $\{\mathbb{B}_\varepsilon:\varepsilon>0\}$ converges to $\mathbb{B}$ pointwisely as $\varepsilon\rightarrow 0$.

\par To check Proposition \ref{P1} and Proposition \ref{P3} for $\mathbb{B}_\varepsilon$. Note that the supports of $\phi_\varepsilon$ and $\psi_\varepsilon$ guarantee that $\mathbb{B}_\varepsilon$ is well-defined in the region $\{\textbf{f}^p\leq F, 0\leq M\leq C\}$ and an easy calculation shows that $0\leq\mathbb{B}_\varepsilon\leq C_p\cdot C\cdot F$. Homogeniety and the scaling property are inherited from $\mathbb{B}$. For the main inequality, the first representation and (\ref{eq3}) imply that
\begin{align*}
\mathbb{B}_\varepsilon(F,\textbf{f},M;C)
& - \frac{1}{2}\left\{\mathbb{B}_\varepsilon(F_+,\textbf{f}_+,M_+;C)+\mathbb{B}_\varepsilon(F_-,\textbf{f}_-,M_-;C)\right\}\\
& \geq\Delta M\cdot\textbf{f}^p\int_1^{(1+\varepsilon)^p}\int_{1+\varepsilon}^{1+2\varepsilon}
\int_1^{(1+\varepsilon)^p}\frac{1}{v^pw}\phi_\varepsilon(u)\psi_\varepsilon(v)\phi_\varepsilon(w)\frac{dudvdw}{uvw}\\
& \geq\frac{1}{(1+2\varepsilon)^p(1+\varepsilon)^p}\Delta M\cdot\textbf{f}^p \rightarrow \Delta M\cdot\textbf{f}^p~\textup{as}~\varepsilon \rightarrow 0.
\end{align*}

\par Hence, the proof of (\ref{eq1}) given in subsection 3.1 works via the smooth function $\mathbb{B}_\varepsilon(F,\textbf{f},M;C)$ as well. In what follows, it suffices to consider only for smooth super-solutions $\mathbb{B}(F,\textbf{f},M;C)$.

\subsection{The main inequality in its infinitesimal version} For a smooth super-solution $\mathbb{B}(F,\textbf{f},M;C)$, being concave means the second differential $d^2\mathbb{B}\leq 0$. By Proposition \ref{P1} (v), we have: $\mathbb{B}(F,\textbf{f},M)-\mathbb{B}(F,\textbf{f},M-\Delta M)\geq\Delta M\cdot\textbf{f}^p$, thus $\frac{\partial\mathbb{B}}{\partial M}\geq\textbf{f}^p$.
\par Therefore, the main inequality (\ref{eq3}) implies the following two infinitesimal ones
\begin{align}
d^2\mathbb{B}(F,\textbf{f},M;C)\leq0~\ \ \ \textup{and}\ \ \ \ ~\frac{\partial\mathbb{B}}{\partial M}(F,\textbf{f},M;C)\geq\textbf{f}^p.\label{eq4}
\end{align}

Actually, (\ref{eq4}) is equivalent to the main inequality (\ref{eq3}). Since by (\ref{eq4}), we can deduce 

\begin{align*}
\Delta M\cdot\textbf{f}^p
& \leq\mathbb{B}(F,\textbf{f},M;C)-\mathbb{B}(F,\textbf{f},M-\Delta M;C)\\
& \leq \mathbb{B}(F,\textbf{f},M;C)-\frac{1}{2}\left\{\mathbb{B}(F_+,\textbf{f}_+,M_+;C)+\mathbb{B}(F_-,\textbf{f}_-,M_-;C)\right\}.
\end{align*}

\section{Finding a super-solution via the Burkh\"{o}lder's hull}

\subsection{Burkh\"{o}lder's hull and some reductions} 
Assume $\mathbb{B}(F,\textbf{\textup{f}},M;C)$ is a smooth super-solution. In this section, we present an explicit function $\mathbb{B}(F,\textbf{\textup{f}},M;C)$ with the help of the Burkh\"{o}lder's hull. Recall from (\ref{eq 104}), define  $u(\textbf{\textup{f}},M;C)=\sup_{\ci{F}}\{\mathbb{B}(F,\textbf{f},M;C)-C_p\cdot C\cdot F\}$. From Remark \ref{R1} and Remark \ref{R2}, it suffices to consider only $C=1$. We use the notation $\mathbb{B}(F,\textbf{\textup{f}},M)=\mathbb{B}(F,\textbf{\textup{f}},M;C=1)$ and $u(\textbf{\textup{f}},M)=u(\textbf{\textup{f}},M;C=1)$ in this section.
\begin{proposition}\label{P4}
The Bukh\"{o}lder's hull $u(\textbf{\textup{f}},M)$ satisfies the following properties:
$$\textup{(i)}~\frac{\partial u}{\partial M}(\textbf{\textup{f}},M)\geq\textbf{\textup{f}}^p~\ \ \ \textup{and\ \ \ \      (ii)}~u(\textbf{\textup{f}},M)~\textup{is concave}.$$
\end{proposition}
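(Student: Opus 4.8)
The plan is to derive both claims from the infinitesimal main inequality (\ref{eq4}) satisfied by the smooth super-solution $\mathbb{B}(F,\textbf{f},M)=\mathbb{B}(F,\textbf{f},M;C=1)$ together with Proposition \ref{P1}(v), exploiting the elementary fact that a partial supremum of a jointly concave function is concave. Recall $u(\textbf{f},M)=\sup_{F\geq\textbf{f}^p}\{\mathbb{B}(F,\textbf{f},M)-C_p\,F\}$; this supremum is finite, lying in $[-C_p\,\textbf{f}^p,0]$, since $\textbf{f}^p\leq F$ and $0\leq\mathbb{B}\leq C_p\,F$ on the relevant domain (cf. Proposition \ref{P2}). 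The first thing I would record is that the domain
$$D=\{(F,\textbf{f},M)\,:\,\textbf{f}\geq0,\ \textbf{f}^p\leq F,\ 0\leq M\leq1\}$$
is convex: since $p>1$, the map $t\mapsto t^p$ is convex on $[0,\infty)$, so $\{\textbf{f}\geq0,\ \textbf{f}^p\leq F\}$ is the epigraph of a convex function, and intersecting with the slab $\{0\leq M\leq1\}$ keeps it convex; its projection onto the $(\textbf{f},M)$-plane is the convex set $\{\textbf{f}\geq0,\ 0\leq M\leq1\}$.

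I would establish (ii) first, since it is what makes the derivative in (i) meaningful. By (\ref{eq4}) we have $d^2\mathbb{B}\leq0$, i.e.\ $\mathbb{B}$ is concave on $D$; hence $g(F,\textbf{f},M):=\mathbb{B}(F,\textbf{f},M)-C_p\,F$ is concave on $D$, being a sum of a concave and a linear function. Now $u$ is the marginal of $g$ in the variable $F$. Given two points $(\textbf{f}_0,M_0),(\textbf{f}_1,M_1)$ in the projection of $D$, a number $\lambda\in[0,1]$, and $\varepsilon>0$, pick $F_i\geq\textbf{f}_i^p$ with $g(F_i,\textbf{f}_i,M_i)\geq u(\textbf{f}_i,M_i)-\varepsilon$. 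The convex combination of the two points $(F_i,\textbf{f}_i,M_i)$ again lies in $D$, so the concavity of $g$ gives
\begin{align*}
u\big(\lambda(\textbf{f}_0,M_0)+(1-\lambda)(\textbf{f}_1,M_1)\big)
&\geq g\big(\lambda F_0+(1-\lambda)F_1,\ \lambda\textbf{f}_0+(1-\lambda)\textbf{f}_1,\ \lambda M_0+(1-\lambda)M_1\big)\\
&\geq \lambda\,g(F_0,\textbf{f}_0,M_0)+(1-\lambda)\,g(F_1,\textbf{f}_1,M_1)\\
&\geq \lambda\,u(\textbf{f}_0,M_0)+(1-\lambda)\,u(\textbf{f}_1,M_1)-\varepsilon.
\end{align*}
Letting $\varepsilon\to0$ proves that $u$ is concave.

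For (i), I would use Proposition \ref{P1}(v), valid for the super-solution $\mathbb{B}$ with $C=1$: $\mathbb{B}(F,\textbf{f},M)\geq\mathbb{B}(F,\textbf{f},M-\Delta M)+\Delta M\cdot\textbf{f}^p$ for $0\leq\Delta M\leq M$. Subtracting $C_p\,F$ and taking the supremum over $F\geq\textbf{f}^p$ (again through an $\varepsilon$-almost-maximizer, so no attainment of the sup is needed) yields $u(\textbf{f},M)\geq u(\textbf{f},M-\Delta M)+\Delta M\cdot\textbf{f}^p$ for all $0\leq\Delta M\leq M$. By (ii) the map $M\mapsto u(\textbf{f},M)$ is concave on $[0,1]$, hence has a one-sided derivative at every point; dividing the last inequality by $\Delta M$ and letting $\Delta M\to0^+$ gives $\frac{\partial u}{\partial M}(\textbf{f},M)\geq\textbf{f}^p$, which is (i). (Alternatively, when the optimal $F^{*}=F^{*}(\textbf{f},M)$ is a smooth function, the envelope theorem gives the same conclusion directly: $\frac{\partial u}{\partial M}=\frac{\partial\mathbb{B}}{\partial M}(F^{*},\textbf{f},M)\geq\textbf{f}^p$ by the second inequality in (\ref{eq4}), the $\partial F^{*}/\partial M$ term vanishing because $\frac{\partial\mathbb{B}}{\partial F}=C_p$ at $F^{*}$.)

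The argument is essentially routine; the only points requiring a little care are the convexity of $D$ and of its $(\textbf{f},M)$-projection (immediate from convexity of $t\mapsto t^p$), the verification that the convex combinations used in the concavity step remain in $D$ (built into the definition of $D$), and the fact that $u$ need not be $C^1$ — which is exactly why (ii) is proved before (i), so that the one-sided derivative appearing in (i) is guaranteed to exist. The conceptual content is just the single observation that a partial supremum of a jointly concave function is concave.
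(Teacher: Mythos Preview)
Your proof is correct and follows essentially the same approach as the paper: both arguments use an $\varepsilon$-almost-maximizer to pass the increment inequality $\mathbb{B}(F,\textbf{f},M)-\mathbb{B}(F,\textbf{f},M-\Delta M)\geq\Delta M\cdot\textbf{f}^p$ through the supremum for (i), and both invoke the standard fact that a partial supremum of a jointly concave function is concave for (ii). The only cosmetic differences are that you reverse the order (proving (ii) before (i)) and add extra care about convexity of the domain and existence of the one-sided derivative, while the paper simply states a short lemma for (ii) and appeals directly to the infinitesimal inequality $\frac{\partial\mathbb{B}}{\partial M}\geq\textbf{f}^p$ rather than Proposition~\ref{P1}(v).
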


\begin{proof}
\begin{enumerate}

\item From $\frac{\partial\mathbb{B}}{\partial M}(F,\textbf{f},M)\geq\textbf{f}^p$ we conclude $\mathbb{B}(F,\textbf{f},M+\Delta M)-\mathbb{B}(F,\textbf{f},M)\geq\Delta M\cdot\textbf{f}^p$. Choose $F_0$ that almost gives the supremum in the definition of $u(\textbf{f},M)$, i.e. for small $\varepsilon>0$, $\mathbb{B}(F_0,\textbf{f},M)-C_p\cdot F_0>u(\textbf{f},M)-\varepsilon$, then
\begin{align*}
u(\textbf{f},M+\Delta M)-u(\textbf{f},M)
& \geq \left[\mathbb{B}(F_0,\textbf{f},M+\Delta M)-C_p\cdot F_0\right]-[\mathbb{B}(F_0,\textbf{f},M)-C_p\cdot F_0+\varepsilon]\\
& = [\mathbb{B}(F_0,\textbf{f},M+\Delta M)-\mathbb{B}(F_0,\textbf{f},M)]-\varepsilon\geq\Delta M\cdot\textbf{f}^p-\varepsilon.
\end{align*}
Letting $\varepsilon\rightarrow0$, so $\frac{\partial u}{\partial M}(\textbf{f},M)\geq\textbf{f}^p$.
\item This follows from a simple lemma.

\begin{lemma}
Let $\varphi(x,y)$ be a convex function and let $\Phi(x)=\sup_{\ci{y}}\varphi(x,y)$, then $\Phi(x)$ is also a concave function.
\end{lemma}

\noindent \emph{Proof}.  We need to see $\Phi(\lambda x_1+(1-\lambda)x_2)\geq\lambda\Phi(x_1)+(1-\lambda)\Phi(x_2)$ for all $x_1, x_2$ and $0\leq\lambda\leq1$. Again choose $y_1$ and $y_2$ in the definition of $\Phi(x)$, such that for small $\varepsilon>0$, $\varphi(x_1,y_1)>\Phi(x_1)-\varepsilon$ and $\varphi(x_2,y_2)>\Phi(x_2)-\varepsilon$. Then
\begin{align*}
\lambda\Phi(x_1)+(1-\lambda)\Phi(x_2)
& < \lambda\varphi(x_1,y_1)+(1-\lambda)\varphi(x_2,y_2)+\varepsilon\\
& \leq\varphi(\lambda x_1+(1-\lambda)x_2,\lambda y_1+(1-\lambda)y_2)+\varepsilon\\
& \leq\Phi((\lambda x_1+(1-\lambda)x_2)+\varepsilon,
\end{align*}
which proves the lemma.
\end{enumerate}
\end{proof}

From Proposition \ref{P2} and Proposition \ref{P4}, if the dyadic Carleson Embedding Theorem \ref{THM1} holds with constant $C_p$, then there exists a concave function $u(\textbf{f},M)$ satisfying  $-C_p\cdot\textbf{f}^p\leq u(\textbf{f},M)\leq0$,  $\frac{\partial u}{\partial M}(\textbf{f},M)\geq\textbf{f}^p$ and $u(t\textbf{f},M)=t^p\cdot u(\textbf{f},M)$, for all $t\geq 0$.

\par On the other hand, if such a $u(\textbf{f},M)$ exists, then we can define $\mathbb{B}(F,\textbf{f},M)=u(\textbf{f},M)+C_p\cdot F$ for $F\geq\textbf{f}^p, 0\leq M\leq 1$, and so $\mathbb{B}$ is a super-solution that proves the dyadic Carleson Embedding Theorem with the same constant $C_p$. Hence, the best constant in the dyadic Carleson Embedding Theorem is exactly the best constant for which the fuction $u(\textbf{f},M)$ exists. 

\par Now using the homogeniety property $u(t\textbf{f},M)=t^p\cdot u(\textbf{f},M)$, $u(\textbf{f},M)$ can be represented as $u(\textbf{f},M)=\textbf{f}^p\cdot\varphi(M)$. For such a function $u(\textbf{f},M)$, the Hessian equals 
\begin{align*}
\begin{pmatrix}
p(p-1)\textbf{f}^{p-2}\varphi(M) & p\textbf{f}^{p-1}\varphi'(M)\\
 p\textbf{f}^{p-1}\varphi'(M) & \textbf{f}^p\varphi{''}(M)
\end{pmatrix},
\end{align*}
so the concavity of $u(\textbf{f},M)$ is equivalent to the following two inequalities
$$\varphi(M)\leq0~\textup{and}~\varphi\varphi^{''}-(p')(\varphi')^2\geq0~\textup{for}~0\leq M\leq1.$$
The inequality $\frac{\partial u}{\partial M}(\textbf{f},M)\geq\textbf{f}^p$ means $\varphi'(M)\geq1$ and $\varphi(M)$ also satisfies $-C_p\leq\varphi(M)\leq0$.\\
Hence, our task is to find $\varphi(M)$, such that
\begin{enumerate}
\item $0\leq M\leq1$
\item $-C_p\leq\varphi(M)\leq0$
\item $\varphi'(M)\geq1$
\item $\varphi\varphi^{''}-(p')(\varphi')^2\geq0$,
\end{enumerate}
and the least possible constant is $C_p=\inf_{\ci{\varphi}}\sup_{\ci{0\leq M\leq1}}\{-\varphi(M)\}$.

\subsection{An explicit super-solution $\mathbb{B}(F,\textbf{f},M)$}
We first introduce $\phi(M)=-\varphi(M)\geq0$, then $\phi(M)$ satisfies
$$\textup{(i)}~0\leq M\leq1;~\textup{(ii)}~0\leq\phi(M)\leq C_p;~\textup{(iii)}~\phi'(M)\leq-1;~\textup{(iv)}~\phi\phi^{''}-(p')(\phi)^2\geq0,$$
and we need to consider $C_p=\inf_{\ci{\phi}}\sup_{\ci{0\leq M\leq1}}\{\phi(M)\}$.
\par Rewrite $\phi\phi^{''}-(p')(\phi)^2\geq0$ as $\phi^{p'+1}\cdot\left(\frac{\phi'}{\phi^{p'}}\right)'\geq0$ or equivalently $\left(\frac{\phi'}{\phi^{p'}}\right)'\geq0$. Let $\left(\frac{\phi'}{\phi^{p'}}\right)'=g(M)\geq0$ and denote $G(M)=\int_0^Mg$, we can solve
$$\phi(M)=\left[\frac{p-1}{C_2M+C_1-\int_0^MG}\right]^{p-1},$$
where $C_1$ and $C_2$ are some constants, such that $C_2M+C_1-\int_0^MG\geq0$ for $0\leq M\leq1$.
\par Note that $\phi'(M)\leq-1$, so $\sup_{\ci{0\leq M\leq1}}\phi(M)=\phi(0)=\left[\frac{p-1}{C_1}\right]^{p-1}$. All we need to do now is to infimize $\left[\frac{p-1}{C_1}\right]^{p-1}$ among all possible $\phi(M)$. 
\par To this end, we compute
$$\phi'(M)=-\left[\frac{p-1}{C_2M+C_1-\int_0^MG}\right]^p\cdot\left[C_2-G(M)\right],$$
and use again $\phi'(M)\leq-1$ with $M=1$, which yields
$$C_1\leq-C_2+\int_0^1G+(p-1)\cdot\left[C_2-G(1)\right]^{\frac{1}{p}}.$$
Remember that $G'(M)=g(M)\geq0$, thus $G(M)$ is increasing, in particular, $\int_0^1G\leq G(1)$, so $C_1\leq-\left[C_2-G(1)\right]+(p-1)\cdot\left[C_2-G(1)\right]^{\frac{1}{p}}$. An easy calculation gives the maximum of the right hand side equals $(p-1)\cdot(p')^{-p'}$ when $C_2=G(1)+(p')^{-p'}$, therefore, $C_1$ is at most $(p-1)\cdot(p')^{-p'}$ and thus  $\left[\frac{p-1}{C_1}\right]^{p-1}\geq(p')^p$.

\par To write down an explicit super-solution, simply take $G(M)=0$, $C_2=(p')^{-p'}$ and $C_1=(p-1)\cdot(p')^{-p'}$, then
$$\phi(M)=\left[\frac{p-1}{C_2M+C_1-\int_0^MG}\right]^{p-1}=\frac{p^p}{(p-1)\cdot[M+(p-1)]^{p-1}},$$
and recall the relation $\mathbb{B}(F,\textbf{f},M)=u(\textbf{f},M)+C_pF=(p')^pF-\textbf{f}^p\cdot\phi(M)$, we obtain
\begin{align}
u(\textbf{f},M)=-\frac{(p\textbf{f})^p}{(p-1)\cdot[M+(p-1)]^{p-1}},
\end{align}

\begin{align}
\mathbb{B}(F,\textbf{f},M)=(p')^pF-\frac{(p\textbf{f})^p}{(p-1)\cdot[M+(p-1)]^{p-1}}.\label{eq5}
\end{align}
In the general case, we have $u(\textbf{\textup{f}},M;C)=C\cdot u(\textbf{\textup{f}},\frac{M}{C})$ and $\mathbb{B}(F,\textbf{\textup{f}},M;C)=C\cdot \mathbb{B}(F,\textbf{\textup{f}},\frac{M}{C})$. This super-solution $\mathbb{B}(F,\textbf{f},M;C)$ gives exactly the sharpness of $C_p=(p')^p$.

\begin{remark}
Now that the dyadic Carleson Embedding Theorem \ref{THM1} is proved, the Bellman function $\mathcal{B}(F,\textbf{f},M;C)$ exists with $C_p=(p')^p$. However, the super-solution $\mathbb{B}(F,\textbf{\textup{f}},M;C)$ obtained above is not the real Bellman function, since on the boundary $F=\textup{f}^p$ the real Bellman function must satisfy the boundary condition $\mathcal{B}(F,\textbf{f},M;C)=M\textbf{f}^p=MF$, but the function we constructed does not satisfy this condition. So, this super-solution only touches the real one along some set. For the exact Bellman function $\mathcal{B}(F,\textbf{f},M;C)$, see \cite{M} and \cite{VV}.
\end{remark}

\section{Remodeling of the Bellman function $\mathcal{B}(F,\textbf{f},M;C=1)$ for an infinitely refining filtration}

\par In this section, we present a remodeling of the Bellman function $\mathcal{B}(F,\textbf{f},M;C=1)$ for an infinitely refining filtration, which is central to the proof of Theorem \ref{THM A} and Theorem \ref{THM B}. We use the notation $\mathcal{B}(F,\textbf{f},M)=\mathcal{B}(F,\textbf{f},M;C=1)$ in this and later sections.
\par Consider the unit interval $I=[0,1]\in\mathcal{D}$, let $\{I_j^k: 1\leq j\leq2^k\}$ be its $k$-th generation decendant by subdividing $I$ into $2^k$ congruent dyadic intervals and denote $I_1^0=I$.
\par Starting from the definition (\ref{eq2}) of the Bellman function $\mathcal{B}(F,\textbf{f},M)$, we can find a function $f\geq 0$ with $\langle f^p\rangle_{\ci{I}}=F$, $\langle f\rangle_{\ci{I}}=\textbf{f}$ and a sequence $\{\alpha_{\ci{J}}\}_{\ci{J\subseteq I}}$, $\sum_{\ci{J\subseteq{I}}}\alpha_{\ci{J}}=M$ satisfying the Carleson condition with constant $C=1$, such that the sum $\sum_{\ci{J\subseteq I}}\alpha_{\ci{J}}\langle f\rangle_{\ci{J}}^p$ (almost) attains $\mathcal{B}(F,\textbf{f},M)$. 
\par To proceed, we further assume that the sequence $\{\alpha_{\ci{J}}\}_{\ci{J\subseteq I}}$ has only \emph{finitely many} non-zero terms. Hence, the indices of $\{\alpha_{\ci{J}}\}_{\ci{J\subseteq I}}$ belong to the collection $\{I_j^k: 1\leq k\leq N, 1\leq j\leq2^k\}$ for some fixed integer $N$, i.e. for all $J\notin\{I_j^k: 1\leq k\leq N, 1\leq j\leq2^k\}$, we have $\alpha_{\ci{J}}=0$. As a consequence, we can think the function $f$ being piecewise constant on all $\{I_j^N:1\leq j\leq2^N\}$.

\par Now, let us do the remodeling. Fix a small $\varepsilon$, $0<\varepsilon<1$. Consider a discrete-time filtered probability space $(\mathcal{X}, \mathcal{F}, \{\mathcal{F}_n\}_{n\geq0}, \mu)$. The initial construction is $\mathcal{X}_1^0=\mathcal{X}$, and this is $\mathcal{F}_{n_0}$-measurable, where $n_0=0$. Assume that the $\mathcal{F}_{n_k}$-measurable sets $\mathcal{X}_j^k, 1\leq j\leq2^k$ are constructed. We want to inductively construct $\mathcal{F}_{n_{k+1}}$-measurable sets $\mathcal{X}_j^{k+1}, 1\leq j\leq2^{k+1}$. Take a $\mathcal{F}_{n_k}$-measurable set $\mathcal{X}_j^k$. Our construction consists two steps.
\par The first step is a modification of the set $\mathcal{X}_j^k$. For the given $\varepsilon>0$ and $\mathcal{X}_j^k\in\mathcal{F}_{n_k}$, Definition \ref{Def2} guarantees the existence of a real-valued $\mathcal{F}_{n_j^k}$-measurable random variable $h$ $(n_j^k>n_k)$, such that: (i) $|h\textbf{1}_{\ci{E}}|=\textbf{1}_{\ci{E}}$ and (ii) $\int_{\mathcal{X}_j^k}|h_{n_k}|d\mu\leq\frac{\varepsilon^2}{4}\mu(X_j^k)$. The condition (ii) is chosen in such a way that
\begin{align}\label{eq9}
\mu\left(\left\{x\in\mathcal{X}_j^k: |h_{n_k}|>\frac{\varepsilon}{2}\right\}\right)\leq\frac{\varepsilon}{2}\mu(\mathcal{X}_j^k).
\end{align}
Let $\widetilde{\mathcal{X}_j^k}=\mathcal{X}_j^k\setminus\left\{x\in\mathcal{X}_j^k: |h_{n_k}|>\frac{\varepsilon}{2}\right\}$. So we can conclude $|h_{n_k}|\leq\frac{\varepsilon}{2}$ on $\widetilde{\mathcal{X}_j^k}$, and moreover, $\left(1-\frac{\varepsilon}{2}\right)\mu(\mathcal{X}_j^k)\leq\mu(\widetilde{\mathcal{X}_j^k})\leq\mu(\mathcal{X}_j^k)$.
\par In the second step, we set $\mathcal{X}_{2j-1}^{k+1}=\widetilde{{X}_j^k}\cap\{h=1\}$ and $\mathcal{X}_{2j}^{k+1}=\widetilde{{X}_j^k}\cap\{h=-1\}$. Since $\left|\int_{\widetilde{\mathcal{X}_j^k}}hd\mu\right|\leq\int_{\widetilde{\mathcal{X}_j^k}}|h_{n_k}|d\mu\leq\frac{\varepsilon}{2}\mu(\widetilde{\mathcal{X}_j^k})$, which gives $\left|\mu(\mathcal{X}_{2j-1}^{k+1})-\mu(\mathcal{X}_{2j}^{k+1})\right|\leq\frac{\varepsilon}{2}\mu({\widetilde{\mathcal{X}_j^k}})\leq\frac{\varepsilon}{2}\mu({\mathcal{X}_j^k})$, we have 
\begin{align}\label{eq10}
\frac{1}{2}(1-\varepsilon)\leq\max\left\{\frac{\mu(\mathcal{X}_{2j-1}^{k+1})}{\mu(\mathcal{X}_j^k)}, \frac{\mu(\mathcal{X}_{2j}^{k+1})}{\mu(\mathcal{X}_j^k)}\right\}\leq\frac{1}{2}(1+\varepsilon).
\end{align}
\par Do this for all $\mathcal{X}_j^k$, $1\leq j\leq2^k$ and let $n_{k+1}=\max\{n_j^k: 1\leq j\leq2^k\}$. Hence, we construct $\mathcal{F}_{n_{k+1}}$-measurable sets $\mathcal{X}_j^{k+1}, 1\leq j\leq2^{k+1}$. Our construction stops when $k=N$.
\par Now that we have constructed $\{\mathcal{X}_j^k: 0\leq k\leq N, 1\leq j\leq2^k\}$. We can define a new sequence  $\{\alpha_n\}_{n\geq0}$ on the space  $(\mathcal{X},\mathcal{F},\mu)$ as 
\begin{align*}
\alpha_{n_k} &=
\left\{ \begin{array}{cc}
\mu(\mathcal{X}_j^k)^{-1}\alpha_{\ci{I_j^k}}, ~\textup{if}~x\in\mathcal{X}_j^k\\
0, ~\textup{if}~ x\in\mathcal{X}\setminus\bigcup_{j=1}^{2^k}\mathcal{X}_j^k\\
\end{array}
\right.
\end{align*}
and $\alpha_n=0$ for all $n$'s different from $n_k, 1\leq k\leq N$.
\par Finally, set the new function $\widetilde{f}$ as $\widetilde{f}\textbf{1}_{\ci{\mathcal{X}_j^N}}=f\textbf{1}_{\ci{I_j^N}}, 1\leq j\leq 2^N$, and set $\widetilde{f}=0$ on $\mathcal{X}\setminus\bigcup_{j=1}^{2^N}\mathcal{X}_j^N$. Note that the function $\widetilde{f}$ is also piecewise constant on all $\{\mathcal{X}_j^k: 0\leq k\leq N, 1\leq j\leq2^k\}$. 

\begin{remark}
This construction guarantees that $\mathbb{E}^\mu\left[\sum_{n\geq 0}\alpha_n\right]=\sum_{\ci{J\subseteq I}}\alpha_{\ci{J}}=M$ and $\mathbb{E}^\mu\left[\widetilde{f}\right]=\langle f\rangle_{\ci{I}}=\textbf{f}$. Later in subsection 6.2 and subsection 7.2, we use a slightly modified version of this construction.
\end{remark}

\par We will frequently consult to the following proposition.

\begin{proposition}\label{P5}
\begin{enumerate}
\item $\frac{1}{2}(1-\varepsilon)\leq\max\left\{\frac{\mu(\mathcal{X}_{2j-1}^{k+1})}{\mu(\mathcal{X}_j^k)},\frac{\mu(\mathcal{X}_{2j}^{k+1})}{\mu(\mathcal{X}_j^k)}\right\}\leq\frac{1}{2}(1+\varepsilon)$.
\item For every subset $E\in\mathcal{F}_{n_k}$ and $\mu(E\cap\mathcal{X}_j^k)>0$, we have
\begin{align}\label{eq11}
\max\left\{\frac{\mu(E\cap\mathcal{X}_{2j-1}^{k+1})}{\mu(E\cap\mathcal{X}_j^k)}, \frac{\mu(E\cap\mathcal{X}_{2j}^{k+1})}{\mu(E\cap\mathcal{X}_j^k)}\right\}\leq\frac{1}{2}(1+\varepsilon).
\end{align}
Combined with \textup{(i)}, we have
\begin{align}\label{eq12}
\max\left\{\frac{\mu(E\cap\mathcal{X}_{2j-1}^{k+1})}{\mu(\mathcal{X}_{2j-1}^{k+1})}, \frac{\mu(E\cap\mathcal{X}_{2j}^{k+1})}{\mu(\mathcal{X}_{2j}^{k+1})}\right\}\leq\frac{1+\varepsilon}{1-\varepsilon}\cdot\frac{\mu(E\cap\mathcal{X}_{j}^{k})}{\mu(\mathcal{X}_{j}^{k})}.
\end{align}
\item $(1-\varepsilon)^k\leq\frac{\mu(\mathcal{X}_{j}^{k})}{|I_j^k|}\leq(1+\varepsilon)^k$ for all $0\leq k\leq N, 1\leq j\leq2^k$.
\item 
$(1-\varepsilon)^k\langle f\rangle_{\ci{I_j^{N-k}}}\leq \langle\widetilde{f}\rangle_{\ci{X_j^{N-k},\mu}}\leq(1+\varepsilon)^k\langle f\rangle_{\ci{I_j^{N-k}}}$ for all $0\leq k\leq N, 1\leq j\leq2^k$.
\end{enumerate}\label{P}
\end{proposition}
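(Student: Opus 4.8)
The plan is to prove the four assertions in turn; each is a short computation straight from the construction of the sets $\mathcal{X}_j^k$ and the Haar-type functions $h$ used at every step, so the only real work is bookkeeping and keeping the inequalities pointing the right way. For (i), I would revisit the two facts already recorded when $\mathcal{X}_{2j-1}^{k+1},\mathcal{X}_{2j}^{k+1}$ were defined: their measures sum to $\mu(\widetilde{\mathcal{X}_j^k})$, which by \eqref{eq9} lies between $(1-\tfrac{\varepsilon}{2})\mu(\mathcal{X}_j^k)$ and $\mu(\mathcal{X}_j^k)$, while their difference has absolute value at most $\tfrac{\varepsilon}{2}\mu(\mathcal{X}_j^k)$. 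Writing each sibling measure as half the sum plus or minus half the difference gives not only the $\max$ bound of \eqref{eq10} but in fact that \emph{both} ratios $\mu(\mathcal{X}_{2j-1}^{k+1})/\mu(\mathcal{X}_j^k)$ and $\mu(\mathcal{X}_{2j}^{k+1})/\mu(\mathcal{X}_j^k)$ lie in $[\tfrac12(1-\varepsilon),\tfrac12(1+\varepsilon)]$; this two-sided form is what the remaining parts need.

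For (ii), fix the $h$ attached to $\mathcal{X}_j^k$ and set $A=E\cap\widetilde{\mathcal{X}_j^k}$. Since $h_{n_k}$ is $\mathcal{F}_{n_k}$-measurable, the set $\widetilde{\mathcal{X}_j^k}$, and hence $A$, is $\mathcal{F}_{n_k}$-measurable, so $\int_A h\,d\mu=\int_A h_{n_k}\,d\mu$ by the tower property, and $|h_{n_k}|\le\tfrac{\varepsilon}{2}$ on $\widetilde{\mathcal{X}_j^k}\supseteq A$ yields $\bigl|\int_A h\,d\mu\bigr|\le\tfrac{\varepsilon}{2}\mu(A)$. Because $A\subseteq\mathcal{X}_j^k=\{|h|=1\}$ one also has $\mu(A\cap\{h=1\})+\mu(A\cap\{h=-1\})=\mu(A)$, whence $\mu(A\cap\{h=\pm1\})=\tfrac12\bigl(\mu(A)\pm\int_A h\,d\mu\bigr)\le\tfrac12(1+\varepsilon)\mu(A)\le\tfrac12(1+\varepsilon)\mu(E\cap\mathcal{X}_j^k)$; since $E\cap\mathcal{X}_{2j-1}^{k+1}=A\cap\{h=1\}$ and $E\cap\mathcal{X}_{2j}^{k+1}=A\cap\{h=-1\}$, dividing by $\mu(E\cap\mathcal{X}_j^k)$ gives \eqref{eq11}. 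Then \eqref{eq12} follows by dividing this numerator bound by the denominator bound $\mu(\mathcal{X}_{2j-1}^{k+1}),\mu(\mathcal{X}_{2j}^{k+1})\ge\tfrac12(1-\varepsilon)\mu(\mathcal{X}_j^k)$ from (i).

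Both (iii) and (iv) I would get by induction on the level. For (iii), induct upward from $\mathcal{X}_1^0=\mathcal{X}$, where $\mu(\mathcal{X})=|I_1^0|=1$; passing from level $k$ to $k+1$, each child interval has half the length of $I_j^k$ while by (i) its remodeled set has measure between $\tfrac12(1-\varepsilon)\mu(\mathcal{X}_j^k)$ and $\tfrac12(1+\varepsilon)\mu(\mathcal{X}_j^k)$, so exactly one factor $(1\pm\varepsilon)$ is gained per generation, which closes the induction. For (iv) I would induct on $k$ starting from the bottom level $N$: when $k=0$ equality holds because $\widetilde f$ is by definition the constant value of $f$ on $I_j^N$ throughout $\mathcal{X}_j^N$. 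For the step, the key point is that $\widetilde f$ is supported on $\bigcup_j\mathcal{X}_j^N$ and therefore vanishes on the pieces $\mathcal{X}_m^{N-k-1}\setminus(\mathcal{X}_{2m-1}^{N-k}\cup\mathcal{X}_{2m}^{N-k})$ discarded at that step, so that $\int_{\mathcal{X}_m^{N-k-1}}\widetilde f\,d\mu=\int_{\mathcal{X}_{2m-1}^{N-k}}\widetilde f\,d\mu+\int_{\mathcal{X}_{2m}^{N-k}}\widetilde f\,d\mu$; plugging in the inductive bounds on the two conditional averages, the mass estimates from (i), the nonnegativity of $f$, and the identity $\langle f\rangle_{I_m^{N-k-1}}=\tfrac12\bigl(\langle f\rangle_{I_{2m-1}^{N-k}}+\langle f\rangle_{I_{2m}^{N-k}}\bigr)$ produces the bound with $(1\pm\varepsilon)^{k+1}$.

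The step that needs the most care is (iv): one must keep the direction of every estimate consistent — which is exactly where $f\ge0$ is used — and one must be certain that $\widetilde f$ leaks no mass into the discarded ``bad'' sets, for otherwise the one-generation averaging identity above breaks. The only subtlety in (ii) is the reason for intersecting $E$ with $\widetilde{\mathcal{X}_j^k}$ rather than directly with $\mathcal{X}_{2j-1}^{k+1}$: it is $\widetilde{\mathcal{X}_j^k}$ that is $\mathcal{F}_{n_k}$-measurable, and that is what licenses replacing $h$ by its conditional expectation $h_{n_k}$ inside the integral.
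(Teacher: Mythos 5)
Your proof is correct and follows essentially the same route as the paper: (i) comes straight from the construction, (ii) by replaying the $h\mapsto h_{n_k}$ conditioning argument for $E\cap\widetilde{\mathcal{X}_j^k}$, and (iii), (iv) by induction on the generation using (i). One genuinely useful clarification you make — and which the paper leaves implicit — is that both sibling ratios $\mu(\mathcal{X}_{2j\mp 1}^{k+1})/\mu(\mathcal{X}_j^k)$ individually lie in $[\tfrac12(1-\varepsilon),\tfrac12(1+\varepsilon)]$, not merely their maximum: as stated, the lower bound $\tfrac12(1-\varepsilon)\le\max\{\cdot\}$ does not by itself control the smaller sibling, yet the lower bound on \emph{each} sibling is exactly what the inductive step in (iii) and the denominator estimate in (\ref{eq12}) require. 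Your sum/difference computation (sum in $[(1-\varepsilon/2)\mu(\mathcal{X}_j^k),\mu(\mathcal{X}_j^k)]$, difference at most $\tfrac{\varepsilon}{2}\mu(\mathcal{X}_j^k)$ in absolute value) delivers this two-sided form cleanly, so your write-up is in fact slightly more careful than the paper's on this point while following the same underlying argument.
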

\begin{proof}
\begin{enumerate}
\item This is (\ref{eq10}) from our construction.
\item This is a very important extension of (i). But we only have the upper bound estimation in this general case. Recall that our construction gives $|h_{n_k}|\leq\frac{\varepsilon}{2}$ on $\widetilde{\mathcal{X}_j^k}$, so $\left|\int_{E\cap\widetilde{\mathcal{X}_j^k}}hd\mu\right|\leq\int_{E\cap\widetilde{\mathcal{X}_j^k}}|h_{n_k}|d\mu\leq\frac{\varepsilon}{2}\mu(E\cap\widetilde{\mathcal{X}_j^k})$, which is $\left|\mu(E\cap\mathcal{X}_{2j-1}^{k+1})-\mu(E\cap\mathcal{X}_{2j}^{k+1})\right|\\
\leq\frac{\varepsilon}{2}\mu({E\cap\widetilde{\mathcal{X}_j^k}})\leq\frac{\varepsilon}{2}\mu({E\cap\mathcal{X}_j^k})$. So we obtain (\ref{eq11}). (\ref{eq12}) follows from (\ref{eq11}) and (i).
\item We prove this by induction. For $k=0$, we have $\mu(\mathcal{X}_1^0)=|I_1^0|=1$. Assuming (iii) holds for $k$, by (i) we can estimate for $\mathcal{X}_{2j}^{k+1}$ (same for $\mathcal{X}_{2j-1}^{k+1}$) that
$$(1-\varepsilon)^{k+1}\leq\frac{\mu(\mathcal{X}_{2j}^{k+1})}{|I_{2j}^{k+1}|}=2\cdot\frac{\mu(\mathcal{X}_{2j}^{k+1})}{\mu(\mathcal{X}_j^k)}\cdot\frac{\mu(\mathcal{X}_{j}^{k})}{|I_j^k|}\leq(1+\varepsilon)^{k+1}.$$
\item Again by induction, for $k=0$, since $\widetilde{f}\textbf{1}_{\ci{\mathcal{X}_j^N}}=f\textbf{1}_{\ci{I_j^N}}, 1\leq j\leq 2^N$ and $\widetilde{f}=0$ on $\mathcal{X}\setminus\bigcup_{j=1}^{2^N}\mathcal{X}_j^N$, we have $\langle\widetilde{f}\rangle_{\ci{X_j^{N},\mu}}=\langle f\rangle_{\ci{I_j^N}}$. Assuming (iv) holds for $k$, by (i) we have
\begin{align*}
(1-\varepsilon)^{k+1}\langle f\rangle_{\ci{I_j^{N-(k+1)}}}
& \leq\langle\widetilde{f}\rangle_{\ci{X_j^{N-(k+1)},\mu}}\\
& =\frac{\mu(\mathcal{X}_{2j-1}^{N-k})}{\mu(\mathcal{X}_{j}^{N-(k+1)})}\langle\widetilde{f}\rangle_{\ci{\mathcal{X}_{2j-1}^{N-k},\mu}}+\frac{\mu(\mathcal{X}_{2j}^{N-k})}{\mu(\mathcal{X}_{j}^{N-(k+1)})}\langle\widetilde{f}\rangle_{\ci{\mathcal{X}_{2j}^{N-k},\mu}}\\
& \leq(1+\varepsilon)^{k+1}\langle f\rangle_{\ci{I_j^{N-(k+1)}}}.
\end{align*}
\end{enumerate}
\end{proof}

\section{The Bellman function $\mathcal{B}_\mu^\mathcal{F}(F, \textbf{f}, M;C)$ of Theorem \ref{THM A}}
\subsection{$\mathcal{B}_{\mu}^{\mathcal{F}}(F,\textbf{\textup{f}},M) \leq \mathcal{B}(F,\textbf{\textup{f}},M)$} 
\par We show (\ref{eq 101}) for the case $C=1$ and the general case follows from the scaling property. Take the Bellman function $\mathcal{B}(F,\textbf{f}, M)$ of the dyadic Carleson Embedding Theorem. Consider an arbitrary  function $f\geq 0$ and an arbitrary Carleson sequence $\{\alpha_n\}_{n\geq 0}$ with $C=1$. Set for every $n\geq 0$, 
\begin{align*}
X^n=(F^n, \textbf{f}^n, M^n)=\left(\mathbb{E}^{\mu}\left[f^p|\mathcal{F}_n\right], \mathbb{E}^{\mu}\left[f|\mathcal{F}_n\right], \mathbb{E}^{\mu}\left[\sum_{k\geq n}\alpha_k|\mathcal{F}_n\right]\right).
\end{align*}
Fix the initial step
\begin{align*}
X^0=\left(\mathbb{E}^\mu[f^p], \mathbb{E}^\mu[f], \mathbb{E}^{\mu}\left[\sum_{n\geq 0}\alpha_n\right]\right)=(F,\textbf{f},M).
\end{align*}
By (\ref{eq6}), $0\leq M^n\leq1$, $\textbf{f}^n=f_n$, and when $n\geq1$, $F^n, \textbf{f}^n$ and $M^n$ are random variables. 
\begin{lemma} For every $n\geq0$, we have
$$\mathbb{E}^{\mu}\left[\mathcal{B}(X^n)\right]-\mathbb{E}^{\mu}\left[\mathcal{B}(X^{n+1})\right]\geq\mathbb{E}^{\mu}\left[\alpha_nf_n^p\right],$$
where $\mathcal{B}(X^n)=\mathcal{B}(F^n, \textbf{\textup{f}}^n, M^n)$.
\end{lemma}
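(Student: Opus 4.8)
The plan is to read $(X^n)_{n\ge0}$ through the concave function $\mathcal{B}$ and recognize the estimate as a one‑step supermartingale inequality with a bonus term, extracted from property (v) of Proposition \ref{P1} (which, as noted at the start of section~3, the dyadic Bellman function $\mathcal{B}$ also satisfies). The crucial algebraic input is the tower property. Since $F^{n+1}=\mathbb{E}^\mu[f^p|\mathcal{F}_{n+1}]$ and $\textbf{f}^{n+1}=\mathbb{E}^\mu[f|\mathcal{F}_{n+1}]$, conditioning down gives $F^n=\mathbb{E}^\mu[F^{n+1}|\mathcal{F}_n]$ and $\textbf{f}^n=\mathbb{E}^\mu[\textbf{f}^{n+1}|\mathcal{F}_n]$. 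For the third coordinate, $\alpha_n$ is $\mathcal{F}_n$-measurable, so
$$M^n=\mathbb{E}^\mu\Big[\sum_{k\ge n}\alpha_k\Big|\mathcal{F}_n\Big]=\alpha_n+\mathbb{E}^\mu\Big[\sum_{k\ge n+1}\alpha_k\Big|\mathcal{F}_n\Big]=\alpha_n+\mathbb{E}^\mu[M^{n+1}|\mathcal{F}_n].$$
Hence the $\mathcal{F}_n$-conditional mean of the random vector $X^{n+1}$ is exactly $(F^n,\textbf{f}^n,M^n-\alpha_n)$.

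First I would dispose of the domain and integrability bookkeeping. By conditional H\"older (or conditional Jensen), $(\textbf{f}^{n+1})^p\le F^{n+1}$, and by the Carleson condition \eqref{eq6} with $C=1$ one has $0\le M^{n+1}\le1$, so $X^{n+1}$ takes values in the convex domain $\{\textbf{f}^p\le F,\ 0\le M\le1\}$ of $\mathcal{B}$; likewise $0\le\alpha_n\le M^n$, the second inequality because $M^n-\alpha_n=\mathbb{E}^\mu[M^{n+1}|\mathcal{F}_n]\ge0$. Moreover $0\le\mathcal{B}(X^{n+1})\le C_p\,F^{n+1}$ by the range bound in Proposition \ref{P1}, and $\mathbb{E}^\mu[F^{n+1}]=F<\infty$, so $\mathcal{B}(X^n)$ and $\mathcal{B}(X^{n+1})$ are integrable.

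Then I would run the two-step estimate. Because $\mathcal{B}$ is concave and continuous (midpoint concavity from the main inequality together with Theorem \ref{THM2}), the conditional Jensen inequality applies:
$$\mathbb{E}^\mu\big[\mathcal{B}(X^{n+1})\,\big|\,\mathcal{F}_n\big]\le\mathcal{B}\big(\mathbb{E}^\mu[X^{n+1}\,|\,\mathcal{F}_n]\big)=\mathcal{B}(F^n,\textbf{f}^n,M^n-\alpha_n).$$
Applying property (v) of Proposition \ref{P1} pointwise with $\Delta M=\alpha_n$ and using $\textbf{f}^n=f_n$,
$$\mathcal{B}(F^n,\textbf{f}^n,M^n)\ \ge\ \mathcal{B}(F^n,\textbf{f}^n,M^n-\alpha_n)+\alpha_n(\textbf{f}^n)^p\ \ge\ \mathbb{E}^\mu\big[\mathcal{B}(X^{n+1})\,\big|\,\mathcal{F}_n\big]+\alpha_n f_n^p.$$
Taking $\mathbb{E}^\mu$ of both sides and using the tower property on the middle term yields $\mathbb{E}^\mu[\mathcal{B}(X^n)]-\mathbb{E}^\mu[\mathcal{B}(X^{n+1})]\ge\mathbb{E}^\mu[\alpha_n f_n^p]$, which is the claim.

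The main obstacle is the clean justification of the conditional Jensen step: $\mathcal{B}$ is finite and concave only on the \emph{unbounded} convex set $\Omega=\{\textbf{f}^p\le F,\ 0\le M\le1\}$, not on all of $\mathbb{R}^3$, and $\mathbb{E}^\mu[X^{n+1}|\mathcal{F}_n]$ may land on $\partial\Omega$ (for instance $M^n-\alpha_n=0$ or $(\textbf{f}^n)^p=F^n$). I would handle this by writing $\mathcal{B}$ as the infimum over $\Omega$ of the affine functions dominating it (possible since $\mathcal{B}$ is concave and continuous, hence upper semicontinuous, on the closed convex $\Omega$, extending by $-\infty$ off $\Omega$ to get a proper concave function on $\mathbb{R}^3$): for each such affine $\ell$, $\mathbb{E}^\mu[\mathcal{B}(X^{n+1})|\mathcal{F}_n]\le\mathbb{E}^\mu[\ell(X^{n+1})|\mathcal{F}_n]=\ell(\mathbb{E}^\mu[X^{n+1}|\mathcal{F}_n])$, and taking the infimum over $\ell$ gives the conditional Jensen inequality (the conditional mean stays in $\Omega$ because $\Omega$ is convex). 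Everything else is routine manipulation of conditional expectations.
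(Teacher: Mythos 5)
Your proof is correct and follows essentially the same route as the paper: identify $X^n=\mathbb{E}^\mu[X^{n+1}\mid\mathcal{F}_n]+(0,0,\alpha_n)$, peel off the $\alpha_n f_n^p$ bonus via Proposition \ref{P1}(v), and apply conditional Jensen to the concave $\mathcal{B}$ before taking unconditional expectations. The one substantive difference is that you spell out the bookkeeping the paper leaves implicit --- that $X^{n+1}$ stays in the convex domain $\Omega$, that $\mathcal{B}(X^n)$ is integrable via the range bound $0\le\mathcal{B}\le C_p F$, and that conditional Jensen for a concave function finite only on $\Omega$ is justified by taking the infimum over dominating affine functionals --- which is a welcome tightening of the argument rather than a genuine departure.
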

\begin{proof}
Recall from subsection 3.2, the Bellman function $\mathcal{B}(F,\textbf{f},M)$ satisfies (\ref{eq 110}). Note also that we have
\begin{align*}
X^n=\mathbb{E}^{\mu}\left[X^{n+1}|\mathcal{F}_n\right]+(0, 0, \alpha_n).
\end{align*}
By Proposition \ref{P1} (v), (\ref{eq 110}) and the Jensen's inequality, we deduce
\begin{align*}
\mathcal{B}(X^n)
&\geq \mathcal{B}\left(\mathbb{E}^{\mu}\left[X^{n+1}|\mathcal{F}_n\right]\right)+\alpha_n f_n^p \geq  \mathbb{E}^{\mu}\left[\mathcal{B}(X^{n+1})|\mathcal{F}_n\right]+\alpha_n f_n^p.
\end{align*}
Taking expectation, we prove exactly
$$\mathbb{E}^{\mu}\left[\mathcal{B}(X^n)\right]-\mathbb{E}^{\mu}\left[\mathcal{B}(X^{n+1})\right]\geq\mathbb{E}^{\mu}\left[\alpha_nf_n^p\right].$$
\end{proof}
Summing up, we get the inequality
\begin{align*}
\mathbb{E}^\mu\left[\sum_{n\geq0}\alpha_nf_n^p\right]\leq\sum_{n\geq0}\left(\mathbb{E}^
\mu[\mathcal{B}(X^n)]-\mathbb{E}^\mu[\mathcal{B}(X^{n+1})]\right)\leq\mathcal{B}(X^0).
\end{align*}
Hence, we conclude that $\mathcal{B}_{\mu}^{\mathcal{F}}(F,\textbf{\textup{f}},M) \leq \mathcal{B}(F,\textbf{\textup{f}},M)$.

\subsection{$\mathcal{B}_{\mu}^{\mathcal{F}}(F,\textbf{\textup{f}},M)=\mathcal{B}(F,\textbf{\textup{f}},M)$ for an infinitely refining filtration}
\par To show (\ref{eq 102A}), again we consider $C=1$. Note first that on the boundary $\textbf{f}^p=F$, we have $\mathcal{B}_{\mu}^{\mathcal{F}}(F,\textbf{\textup{f}},M)=\mathcal{B}(F,\textbf{\textup{f}},M)=MF$. For the case $\textbf{f}^p<F$, we need to apply the remodeling from section 5.

\par For technical issues, we slightly modify our remodeling here. First, by the continuity of $\mathcal{B}$ (see subsection 3.2), there exists $\delta_1>0$, such that $\textbf{f}^p<F-\delta_1$ and $\mathcal{B}(F-\delta_1,\textbf{\textup{f}},M)$ is close to $\mathcal{B}(F,\textbf{\textup{f}},M)$. Next, by the definition of $\mathcal{B}$, we can find a non-negative function $f$ on the unit interval $I=[0, 1]$ with $\langle f^p\rangle_{\ci{I}}=F-\delta_1$, $\langle f\rangle_{\ci{I}}=\textbf{f}$ and a sequence $\{\alpha_{\ci{J}}\}_{\ci{J\subseteq I}}$, $\sum_{\ci{J\subseteq{I}}}\alpha_{\ci{J}}=M$ satisfying the Carleson condition with constant $C=1$, such that the sum $\sum_{\ci{J\subseteq I}}\alpha_{\ci{J}}\langle f\rangle_{\ci{J}}^p$ (almost) equals $\mathcal{B}(F,\textbf{f},M)$. Moreover, by again the continuity, we can choose a \emph{finite} subset of $\{\alpha_{\ci{J}}\}_{\ci{J\subseteq I}}$ such that $\sum_{\ci{J\subseteq{I}}}\alpha_{\ci{J}}=M-\delta_2$ for some $\delta_2>0$ and $\sum_{\ci{J\subseteq I}}\alpha_{\ci{J}}\langle f\rangle_{\ci{J}}^p$ still (almost) equals $\mathcal{B}(F,\textbf{f},M)$. For simplicity, we assume exactly

\begin{align}\label{eq last}
\sum_{\ci{J\subseteq I}}\alpha_{\ci{J}}\langle f\rangle_{\ci{J}}^p=\mathcal{B}(F,\textbf{f},M).
\end{align}

Let the indices of $\{\alpha_{\ci{J}}\}_{\ci{J\subseteq I}}$ belong to the collection $\{I_j^k: 1\leq k\leq N, 1\leq j\leq2^k\}$ for some fixed integer $N$. Choose $\varepsilon>0$, such that $F-\delta_1\leq\frac{1}{(1+\varepsilon)^N}F$. We do the remodeling with this $\varepsilon>0$ to construct $\{\mathcal{X}_j^k: 0\leq k\leq N, 1\leq j\leq2^k\}$, $\{\alpha_n\}_{n\geq0}$ and $\widetilde{f}$ on the space  $(\mathcal{X},\mathcal{F},\mu)$. To proceed, we observe that

\begin{lemma}
\begin{align}
\mathbb{E}^\mu\left[\widetilde{f}^p\right]\leq(1+\varepsilon)^N\langle f^p\rangle_{\ci{I}}.\label{c3}
\end{align}
\end{lemma}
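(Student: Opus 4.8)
The plan is to bound $\mathbb{E}^\mu[\widetilde f^p]$ by comparing it level-by-level with $\langle f^p\rangle_{\ci{I}}$, exploiting that $\widetilde f$ is piecewise constant on the bottom generation $\{\mathcal{X}_j^N\}$ and that $\widetilde f\textbf{1}_{\ci{\mathcal{X}_j^N}}=f\textbf{1}_{\ci{I_j^N}}$. Concretely, since $\widetilde f$ vanishes off $\bigcup_j\mathcal{X}_j^N$, we have $\mathbb{E}^\mu[\widetilde f^p]=\sum_{j=1}^{2^N}\mu(\mathcal{X}_j^N)\,\langle f\rangle_{\ci{I_j^N}}^{\,p}\cdot(\text{value of }f^p\text{ on }I_j^N)$, i.e. $\mathbb{E}^\mu[\widetilde f^p]=\sum_{j=1}^{2^N}\mu(\mathcal{X}_j^N)\cdot(f|_{I_j^N})^p$, whereas $\langle f^p\rangle_{\ci{I}}=\sum_{j=1}^{2^N}|I_j^N|\cdot(f|_{I_j^N})^p$. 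So the inequality reduces entirely to the measure comparison $\mu(\mathcal{X}_j^N)\leq(1+\varepsilon)^N|I_j^N|$, which is exactly the upper half of Proposition \ref{P5}(iii) with $k=N$. Summing over $j$ then gives $\mathbb{E}^\mu[\widetilde f^p]\leq(1+\varepsilon)^N\sum_j|I_j^N|(f|_{I_j^N})^p=(1+\varepsilon)^N\langle f^p\rangle_{\ci{I}}$.

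First I would record that $f$ is constant on each $I_j^N$ (this was arranged in the remodeling, since the $\{\alpha_{\ci J}\}$ have indices only down to level $N$) and write $f|_{I_j^N}=:c_j$. Then I would write both sides as finite sums: the left side using $\widetilde f\textbf{1}_{\ci{\mathcal{X}_j^N}}=c_j\textbf{1}_{\ci{\mathcal{X}_j^N}}$ and $\widetilde f=0$ elsewhere, so $\mathbb{E}^\mu[\widetilde f^p]=\sum_{j=1}^{2^N}c_j^p\,\mu(\mathcal{X}_j^N)$; the right side as $\langle f^p\rangle_{\ci{I}}=|I|^{-1}\int_I f^p=\sum_{j=1}^{2^N}c_j^p\,|I_j^N|$ (using $|I|=1$). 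Next I would invoke Proposition \ref{P5}(iii) at level $N$, namely $\mu(\mathcal{X}_j^N)\leq(1+\varepsilon)^N|I_j^N|$ for every $j$. Substituting termwise and factoring out $(1+\varepsilon)^N$ finishes the proof.

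There is essentially no obstacle here: the lemma is a bookkeeping consequence of Proposition \ref{P5}(iii), which has already been established by induction in section 5. The only point requiring a word of care is the justification that $f$ may be taken piecewise constant on $\{I_j^N\}$ — but this is exactly the finiteness reduction carried out just before the lemma (the sequence $\{\alpha_{\ci J}\}$ has only finitely many nonzero terms, all with indices in generations $1$ through $N$, so $f$ can be taken constant on the $N$-th generation intervals). Thus the proof is short: convert to finite sums, apply the measure estimate, sum up.

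\begin{proof}
Since the sequence $\{\alpha_{\ci{J}}\}_{\ci{J\subseteq I}}$ has only finitely many non-zero terms with indices in $\{I_j^k:1\leq k\leq N, 1\leq j\leq 2^k\}$, the function $f$ may be taken constant on each $I_j^N$; write $f|_{\ci{I_j^N}}=c_j$. By the definition of $\widetilde f$, we have $\widetilde f\textbf{1}_{\ci{\mathcal{X}_j^N}}=c_j\textbf{1}_{\ci{\mathcal{X}_j^N}}$ and $\widetilde f=0$ on $\mathcal{X}\setminus\bigcup_{j=1}^{2^N}\mathcal{X}_j^N$, so
\begin{align*}
\mathbb{E}^\mu\left[\widetilde f^p\right]=\sum_{j=1}^{2^N}c_j^p\,\mu(\mathcal{X}_j^N).
\end{align*}
On the other hand, since $|I|=1$,
\begin{align*}
\langle f^p\rangle_{\ci{I}}=\int_I f^p\,=\sum_{j=1}^{2^N}c_j^p\,|I_j^N|.
\end{align*}
By Proposition \ref{P5}(iii) with $k=N$, $\mu(\mathcal{X}_j^N)\leq(1+\varepsilon)^N|I_j^N|$ for every $1\leq j\leq 2^N$. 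Hence
\begin{align*}
\mathbb{E}^\mu\left[\widetilde f^p\right]=\sum_{j=1}^{2^N}c_j^p\,\mu(\mathcal{X}_j^N)\leq(1+\varepsilon)^N\sum_{j=1}^{2^N}c_j^p\,|I_j^N|=(1+\varepsilon)^N\langle f^p\rangle_{\ci{I}},
\end{align*}
which is \eqref{c3}.
\end{proof}
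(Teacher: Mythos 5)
Your proof is correct and follows essentially the same route as the paper: both reduce $\mathbb{E}^\mu[\widetilde f^p]$ to a finite sum over the sets $\mathcal{X}_j^N$, identify the constant values of $\widetilde f$ on $\mathcal{X}_j^N$ with those of $f$ on $I_j^N$, and invoke Proposition \ref{P5}(iii) at level $k=N$ to compare $\mu(\mathcal{X}_j^N)$ with $(1+\varepsilon)^N|I_j^N|$. The only cosmetic difference is that you write everything out in terms of the constants $c_j$ while the paper phrases the identical computation using the averages $\langle\widetilde f^p\rangle_{\ci{\mathcal{X}_j^N,\mu}}=\langle f^p\rangle_{\ci{I_j^N}}$.
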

\begin{proof}
By (iii) of Proposition \ref{P},
$$\mathbb{E}^\mu\left[\widetilde{f}^p\right]=\sum_{j=1}^{2^N}\langle\widetilde{f}^p\rangle_{\ci{\mathcal{X}_{j}^N,\mu}}\mu(\mathcal{X}_j^N)\leq\sum_{j=1}^{2^N}\langle f^p\rangle_{\ci{I_j^N}}\cdot (1+\varepsilon)^N |I_j^N|=(1+\varepsilon)^N\langle f^p\rangle_{\ci{I}}.$$
\end{proof}

So (\ref{c3}) and $\langle f^p\rangle_{\ci{I}}=F-\delta_1\leq\frac{1}{(1+\varepsilon)^N}F$ imply that  $\mathbb{E}^\mu\left[\widetilde{f}^p\right]\leq F$. Also recall from the remodeling, we know $\mathbb{E}^\mu\left[\widetilde{f}\right]=\langle f\rangle_{\ci{I}}=\textbf{f}$. Let us further modify the function $\widetilde{f}$ in the following way. Note that we are working on an infinitely refining filtration (see definition \ref{Def2}). There exists a simple function $g$ behaving like a Haar function, such that $g$ is supported on $\mathcal{X}_1^N$, $\langle g\rangle_{\ci{\mathcal{X}_1^N,\mu}}=0$ and $0<\mathbb{E}^\mu\left[|g|^p\right]<\infty$. Consider the continuous function 
$$a(t)=\mathbb{E}^\mu\left[\left|\widetilde{f}+tg\right|^p\right].$$
Thus, $a(0)\leq F$ and $\lim_{t\rightarrow\infty}a(t)=\infty$. Hence, we can find $t_0\geq 0$, such that $\mathbb{E}^\mu\left[\left|\widetilde{f}+t_0g\right|^p\right]=F$. Update $\widetilde{f}$ to $\widetilde{f}+t_0g$. We have then $\mathbb{E}^\mu\left[\left|\widetilde{f}\right|^p\right]=F$ and $\mathbb{E}^\mu\left[\widetilde{f}\right]=\textbf{f}$. Note here the updated function $\widetilde{f}$ might be negative, however, all the relevant average values we will use are still non-negative.

\par Now, let us discuss the properties of the Carleson sequence $\{\alpha_n\}_{n\geq0}$. Directly from the remodeling, we know $\mathbb{E}^\mu\left[\sum_{n\geq 0}\alpha_n\right]=\sum_{\ci{J\subseteq{I}}}\alpha_{\ci{J}}=M-\delta_2$. Moreover, we can prove

\begin{lemma}
The non-negative sequence $\{\alpha_n\}_{n\geq0}$ satisfies each $\alpha_n$ is $\mathcal{F}_n$-measurable and
\begin{align}
\mathbb{E}^\mu\left[\sum_{k\geq n}\alpha_k|\mathcal{F}_n\right]\leq\frac{(1+\varepsilon)^N}{(1-\varepsilon)^{2N}}~\textup{for every}~n\geq0.\label{c1}
\end{align}
\end{lemma}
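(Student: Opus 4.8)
The plan is to verify the two assertions about $\{\alpha_n\}_{n\geq 0}$ separately: measurability is immediate, while the Carleson bound \eqref{c1} requires transferring the dyadic Carleson condition on $\{\alpha_{\ci{J}}\}_{\ci{J\subseteq I}}$ through the remodeling, keeping careful track of the distortion factors supplied by Proposition \ref{P}.

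First, measurability: by construction $\alpha_{n_k}$ is constant on each $\mathcal{X}_j^k$ and zero off $\bigcup_j \mathcal{X}_j^k$, and each $\mathcal{X}_j^k$ is $\mathcal{F}_{n_k}$-measurable, so $\alpha_{n_k}$ is $\mathcal{F}_{n_k}$-measurable; for indices $n$ not of the form $n_k$ we have $\alpha_n\equiv 0$, which is trivially $\mathcal{F}_n$-measurable. Thus each $\alpha_n$ is $\mathcal{F}_n$-measurable.

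For the Carleson estimate, fix $n\geq 0$. Since all nonzero $\alpha$'s live at levels $n_1<\cdots<n_N$, it suffices to bound $\mathbb{E}^\mu[\sum_{k\geq n}\alpha_k\mid\mathcal{F}_n]$ at a point $x$; let $m$ be the smallest index with $n_m\geq n$ (if there is none the sum is $0$). The key point is that conditioning on $\mathcal{F}_n$ is, up to the measure ratios, the same as conditioning on $\mathcal{F}_{n_m}$: for a set $E\in\mathcal{F}_n$ with $\mu(E\cap\mathcal{X}_j^m)>0$ one has $\mathbb{E}^\mu[\alpha_{n_k}\mathbf{1}_E]=\sum_{\ell}\mu(E\cap\mathcal{X}_\ell^k)\,\mu(\mathcal{X}_\ell^k)^{-1}\alpha_{\ci{I_\ell^k}}$ for $k\geq m$, and each descendant $\mathcal{X}_\ell^k$ of $\mathcal{X}_j^m$ contributes, via iterated use of \eqref{eq12}, a factor at most $\left(\tfrac{1+\varepsilon}{1-\varepsilon}\right)^{k-m}\tfrac{\mu(E\cap\mathcal{X}_j^m)}{\mu(\mathcal{X}_j^m)}$. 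Summing over $k\geq m$ and over the descendants $I_\ell^k\subseteq I_j^m$, the inner sum is $\sum_{k\geq m}\sum_{I_\ell^k\subseteq I_j^m}\alpha_{\ci{I_\ell^k}}\cdot\mu(\mathcal{X}_\ell^k)^{-1}$; using Proposition \ref{P} (iii) to replace $\mu(\mathcal{X}_\ell^k)^{-1}$ by at most $(1-\varepsilon)^{-k}|I_\ell^k|^{-1}$, and again (iii) to compare $\mu(\mathcal{X}_j^m)$ with $|I_j^m|$, one is left with the \emph{dyadic} Carleson sum $|I_j^m|^{-1}\sum_{J\subseteq I_j^m}\alpha_{\ci{J}}\leq 1$, multiplied by the accumulated distortion constants, which one checks are all bounded by $\tfrac{(1+\varepsilon)^N}{(1-\varepsilon)^{2N}}$ since $k,m\leq N$. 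Dividing by $\mu(E)$ and taking the supremum over $E\in\mathcal{F}_n$ (equivalently, evaluating the conditional expectation pointwise) gives \eqref{c1}.

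I expect the main obstacle to be organizing the bookkeeping of the several multiplicative error factors — the one-sided bounds in \eqref{eq12}, the $(1\pm\varepsilon)^k$ comparisons in Proposition \ref{P} (iii), and the fact that only an upper bound (not a two-sided one) is available in the refining step — so that they collapse to the single clean exponent $\tfrac{(1+\varepsilon)^N}{(1-\varepsilon)^{2N}}$ rather than something that blows up with the depth at which $\alpha$ lives. The worst case is $\alpha$ concentrated at the deepest level $k=N$ starting from $m$ close to $0$, and one must confirm that even then the product $\left(\tfrac{1+\varepsilon}{1-\varepsilon}\right)^{N}\cdot(1-\varepsilon)^{-N}\cdot(1+\varepsilon)^{N}$-type quantity is dominated by the stated bound; a slightly generous grouping of the factors (e.g. absorbing $(1+\varepsilon)^{k-m}$ into $(1+\varepsilon)^N$ and $(1-\varepsilon)^{-(k-m)}(1-\varepsilon)^{-k}$ into $(1-\varepsilon)^{-2N}$) does the job without any delicate optimization.
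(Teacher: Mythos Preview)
Your approach is essentially the paper's: reduce to the first level $n_{k_0}\geq n$, iterate \eqref{eq12} to bound $\mu(E\cap\mathcal{X}_\ell^k)/\mu(\mathcal{X}_\ell^k)$ by $\bigl(\tfrac{1+\varepsilon}{1-\varepsilon}\bigr)^{N}\mu(E\cap\mathcal{X}_j^{k_0})/\mu(\mathcal{X}_j^{k_0})$, apply the dyadic Carleson condition $\sum_{I_\ell^k\subseteq I_j^{k_0}}\alpha_{\ci{I_\ell^k}}\leq |I_j^{k_0}|$, and use Proposition~\ref{P}\,(iii) to convert $|I_j^{k_0}|\leq(1-\varepsilon)^{-N}\mu(\mathcal{X}_j^{k_0})$, yielding exactly $\tfrac{(1+\varepsilon)^N}{(1-\varepsilon)^{2N}}\mu(E)$.

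One bookkeeping slip to clean up: once you have applied \eqref{eq12}, the factor $\mu(\mathcal{X}_\ell^k)^{-1}$ has already been absorbed into the ratio at level $m$, so your ``inner sum'' should read $\sum_{I_\ell^k\subseteq I_j^m}\alpha_{\ci{I_\ell^k}}$, not $\sum\alpha_{\ci{I_\ell^k}}\mu(\mathcal{X}_\ell^k)^{-1}$; the extra application of Proposition~\ref{P}\,(iii) at level $k$ you describe is then unnecessary, and the three distortion factors reduce to just $\bigl(\tfrac{1+\varepsilon}{1-\varepsilon}\bigr)^{N}$ from \eqref{eq12} and one $(1-\varepsilon)^{-N}$ from (iii) at level $m$.
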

\begin{proof}
From the construction, it is clear that each $\alpha_n$ is non-negative and $\mathcal{F}_n$-measurable. So we need to show for every $\mathcal{F}_n$-measurable set $E$, we have
\begin{align*}
\mathbb{E}^\mu\left[\sum_{k\geq n}\alpha_k\textbf{1}_{\ci{E}}\right]\leq\frac{(1+\varepsilon)^N}{(1-\varepsilon)^{2N}}\cdot\mu(E).
\end{align*}
Denote by $k_0=\min\{k: n_k\geq n\}$. Since $\mathbb{E}^\mu\left[\sum_{k\geq n}\alpha_k\textbf{1}_{\ci{E}}\right]=\mathbb{E}^\mu\left[\mathbb{E}^\mu\left[\sum_{k\geq k_0}\alpha_{n_k}|\mathcal{F}_{n_{k_0}}\right]\textbf{1}_{\ci{E}}\right]$, it suffices to show
\begin{align*}
\mathbb{E}^\mu\left[\sum_{k\geq k_0}\alpha_{n_k}|\mathcal{F}_{n_{k_0}}\right]\leq\frac{(1+\varepsilon)^N}{(1-\varepsilon)^{2N}}, 
\end{align*}
or equivalently, for every $\mathcal{F}_{n_{k_0}}$-measurable set $E$, we have
\begin{align*}
\mathbb{E}^\mu\left[\sum_{k\geq k_0}\alpha_{n_k}\textbf{1}_{\ci{E}}\right]\leq\frac{(1+\varepsilon)^N}{(1-\varepsilon)^{2N}}\cdot\mu(E). 
\end{align*}
Now the explicit computation shows
$$\mathbb{E}^\mu\left[\sum_{k\geq k_0}\alpha_{n_k}\textbf{1}_{\ci{E}}\right]=\sum_{k\geq k_0}\sum_{j=1}^{2^k}\alpha_{\ci{I_j^k}}\frac{\mu(E\cap\mathcal{X}_j^k)}{\mu(\mathcal{X}_j^k)}.$$
An iteration of (\ref{eq12}) gives
\begin{align*}
\frac{\mu(E\cap\mathcal{X}_j^k)}{\mu(\mathcal{X}_j^k)}\leq\frac{(1+\varepsilon)^N}{(1-\varepsilon)^N}\cdot\frac{\mu(E\cap\mathcal{X}_l^{k_0})}{\mu(\mathcal{X}_l^{k_0})},~\textup{whenever}~
\mathcal{X}_j^k\subseteq\mathcal{X}_l^{k_0}.
\end{align*}
So we can estimate
\begin{align*}
\mathbb{E}^\mu\left[\sum_{k\geq k_0}\alpha_{n_k}\textbf{1}_{\ci{E}}\right]
&\leq \sum_{l=1}^{2^{k_0}}\frac{(1+\varepsilon)^N}{(1-\varepsilon)^N}\cdot\frac{\mu(E\cap\mathcal{X}_l^{k_0})}{\mu(\mathcal{X}_l^{k_0})}\sum_{k, j: \mathcal{X}_j^k\subseteq\mathcal{X}_l^{k_0}}\alpha_{\ci{I_j^k}},~(\{\alpha_{\ci{I}}\}~\textup{is a Carleson sequence})\\
&\leq \frac{(1+\varepsilon)^N}{(1-\varepsilon)^N}\sum_{l=1}^{2^{k_0}}\frac{\mu(E\cap\mathcal{X}_l^{k_0})}{\mu(\mathcal{X}_l^{k_0})}\cdot\left|I_l^{k_0}\right|,~(\textup{Proposition \ref{P} (iii)})\\
&\leq \frac{(1+\varepsilon)^N}{(1-\varepsilon)^{2N}}\sum_{l=1}^{2^{k_0}}\mu(E\cap\mathcal{X}_l^{k_0})\leq \frac{(1+\varepsilon)^N}{(1-\varepsilon)^{2N}}\cdot\mu(E). 
\end{align*}
\end{proof}

\par To finish, we need one final lemma.

\begin{lemma}
\begin{align}
\mathbb{E}^\mu\left[\sum_{n\geq0}\alpha_n\left|\widetilde{f}_n\right|^p\right]\geq(1-\varepsilon)^{pN}
\sum_{J\subseteq I}\alpha_{\ci{J}}\langle f\rangle_{\ci{J}}^p.\label{c2}
\end{align}
\end{lemma}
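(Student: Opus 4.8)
The plan is to compare the martingale sum $\mathbb{E}^\mu[\sum_{n\geq0}\alpha_n|\widetilde f_n|^p]$ term-by-term against the dyadic sum $\sum_{J\subseteq I}\alpha_{\ci{J}}\langle f\rangle_{\ci{J}}^p$. By construction $\alpha_n=0$ unless $n=n_k$ for some $1\leq k\leq N$, and on each $\mathcal{X}_j^k$ we have $\alpha_{n_k}=\mu(\mathcal{X}_j^k)^{-1}\alpha_{\ci{I_j^k}}$ with $\alpha_{n_k}=0$ outside $\bigcup_j\mathcal{X}_j^k$. Hence
\begin{align*}
\mathbb{E}^\mu\left[\sum_{n\geq0}\alpha_n\left|\widetilde f_n\right|^p\right]
=\sum_{k=1}^{N}\sum_{j=1}^{2^k}\alpha_{\ci{I_j^k}}\cdot\frac{1}{\mu(\mathcal{X}_j^k)}\int_{\mathcal{X}_j^k}\left|\widetilde f_{n_k}\right|^pd\mu.
\end{align*}
So it suffices to show, for each $k$ and $j$, that $\mu(\mathcal{X}_j^k)^{-1}\int_{\mathcal{X}_j^k}|\widetilde f_{n_k}|^pd\mu\geq(1-\varepsilon)^{pN}\langle f\rangle_{\ci{I_j^k}}^p$, after which summing against the non-negative weights $\alpha_{\ci{I_j^k}}$ finishes the proof.

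For the pointwise-on-atoms estimate I would first reduce from $\widetilde f_{n_k}$ to the average $\langle\widetilde f\rangle_{\ci{\mathcal{X}_j^k,\mu}}$. Since $\mathcal{X}_j^k\in\mathcal{F}_{n_k}$, Jensen's inequality (conditioning $|\cdot|^p$) gives
\begin{align*}
\frac{1}{\mu(\mathcal{X}_j^k)}\int_{\mathcal{X}_j^k}\left|\widetilde f_{n_k}\right|^pd\mu
\geq\left|\frac{1}{\mu(\mathcal{X}_j^k)}\int_{\mathcal{X}_j^k}\widetilde f_{n_k}\,d\mu\right|^p
=\left|\langle\widetilde f\rangle_{\ci{\mathcal{X}_j^k,\mu}}\right|^p,
\end{align*}
using that $\widetilde f_{n_k}$ integrates to the same value as $\widetilde f$ over the $\mathcal{F}_{n_k}$-set $\mathcal{X}_j^k$. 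Then Proposition \ref{P} (iv) (applied with the index shifted: $\mathcal{X}_j^k$ plays the role of $\mathcal{X}_j^{N-(N-k)}$, so the loss factor is $(1-\varepsilon)^{N-k}\geq(1-\varepsilon)^N$) yields $\langle\widetilde f\rangle_{\ci{\mathcal{X}_j^k,\mu}}\geq(1-\varepsilon)^{N-k}\langle f\rangle_{\ci{I_j^k}}\geq(1-\varepsilon)^N\langle f\rangle_{\ci{I_j^k}}\geq 0$, and raising to the $p$-th power gives the factor $(1-\varepsilon)^{pN}$. Note that although the updated $\widetilde f$ (after adding $t_0 g$) may be negative, the correction $g$ is supported on $\mathcal{X}_1^N$ with mean zero there, so it does not affect any of the averages $\langle\widetilde f\rangle_{\ci{\mathcal{X}_j^k,\mu}}$ for $k\leq N$ that appear here, and these remain non-negative — this is exactly the point flagged in the remark after the construction of $t_0$.

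Assembling: combine the displayed identity with the two estimates to get
\begin{align*}
\mathbb{E}^\mu\left[\sum_{n\geq0}\alpha_n\left|\widetilde f_n\right|^p\right]
\geq\sum_{k=1}^{N}\sum_{j=1}^{2^k}\alpha_{\ci{I_j^k}}(1-\varepsilon)^{pN}\langle f\rangle_{\ci{I_j^k}}^p
=(1-\varepsilon)^{pN}\sum_{J\subseteq I}\alpha_{\ci{J}}\langle f\rangle_{\ci{J}}^p,
\end{align*}
where the last equality uses that $\alpha_{\ci{J}}=0$ for $J\notin\{I_j^k:1\leq k\leq N\}$ and that $\alpha_{\ci{I}}$ (the $k=0$ term) contributes $\alpha_{\ci{I}}\langle f\rangle_{\ci{I}}^p$ on both sides — one should be slightly careful whether the $k=0$ term is included, but since $\mathcal{X}_1^0=\mathcal{X}$ and $\langle\widetilde f\rangle_{\ci{\mathcal{X},\mu}}=\textbf{f}=\langle f\rangle_{\ci{I}}$ with loss factor $1$, it is handled the same way (indeed better). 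The main obstacle is purely bookkeeping: tracking the correct power of $(1-\varepsilon)$ at generation $k$ versus the uniform bound $(1-\varepsilon)^{N}$, and making sure the Jensen step is legitimate despite $\widetilde f$ possibly changing sign — both are routine given Proposition \ref{P} and the mean-zero property of $g$.
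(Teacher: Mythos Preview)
Your proof is correct and follows essentially the same route as the paper: expand the sum using the definition of $\alpha_{n_k}$, apply Jensen's inequality on each $\mathcal{X}_j^k$ to pass from $\langle|\widetilde f_{n_k}|^p\rangle_{\ci{\mathcal{X}_j^k,\mu}}$ to $\langle\widetilde f\rangle_{\ci{\mathcal{X}_j^k,\mu}}^p$, and then invoke Proposition~\ref{P}(iv) to compare with $\langle f\rangle_{\ci{I_j^k}}^p$. Your additional remarks on the sharper factor $(1-\varepsilon)^{N-k}$, the $k=0$ term, and why the mean-zero correction $g$ does not disturb the relevant averages are all accurate and make explicit what the paper leaves to the reader.
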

\begin{proof}
\begin{align*}
\mathbb{E}^\mu\left[\sum_{n\geq0}\alpha_n\left|\widetilde{f}_n\right|^p\right]
&=\mathbb{E}^\mu\left[\sum_{k\geq0}\alpha_{n_k}\left|\widetilde{f}_{n_k}\right|^p\right]
=\sum_{k\geq0}\sum_{j=1}^{2^k}\alpha_{I_j^k}
\left\langle\left|\widetilde{f}_{n_k}\right|^p\right\rangle_{\ci{\mathcal{X}_j^k,\mu}}\\
&\geq\sum_{k\geq0}\sum_{j=1}^{2^k}\alpha_{\ci{I_j^k}}
\langle\widetilde{f}_{n_k}\rangle^p_{\ci{\mathcal{X}_j^k,\mu}}=\sum_{k\geq0}\sum_{j=1}^{2^k}\alpha_{\ci{I_j^k}}
\langle\widetilde{f}\rangle^p_{\ci{\mathcal{X}_j^k,\mu}},~\textup{(Proposition \ref{P} (iv))}\\
& \geq(1-\varepsilon)^{pN}\sum_{k\geq0}\sum_{j=1}^{2^k}\alpha_{I_j^k}
\langle f\rangle^p_{\ci{I_j^k}}
=(1-\varepsilon)^{pN}\sum_{J\subseteq I}\alpha_{\ci{J}}\langle f\rangle_{\ci{J}}^p.
\end{align*}
\end{proof}

Summarizing, we have constructed a function $\widetilde{f}$ and a Carleson sequence $\{\alpha_n\}_{n\geq0}$ satisfying (\ref{c1}) with $\mathbb{E}^\mu\left[\left|\widetilde{f}\right|^p\right]=F$,  $\mathbb{E}^\mu\left[\widetilde{f}\right]=\textbf{f}$ and $\mathbb{E}^\mu\left[\sum_{n\geq 0}\alpha_n\right]=\sum_{\ci{J\subseteq{I}}}\alpha_{\ci{J}}=M-\delta_2$. By (\ref{eq last}) and (\ref{c2}), we deduce 

\begin{align*}
\mathcal{B}_\mu^{\mathcal{F}}\left(F, \textbf{f}, M-\delta_2; C=\frac{(1+\varepsilon)^N}{(1-\varepsilon)^{2N}}\right)
& \geq (1-\varepsilon)^{pN}
\sum_{J\subseteq I}\alpha_{\ci{J}}\langle f\rangle_{\ci{J}}^p =(1-\varepsilon)^{pN}\mathcal{B}\left(F, \textbf{f}, M\right).
\end{align*}

And Proposition \ref{P1} (iv) and (v) imply that

\begin{align*}
\mathcal{B}_\mu^{\mathcal{F}}\left(F, \textbf{f}, M-\delta_2; C=\frac{(1+\varepsilon)^N}{(1-\varepsilon)^{2N}}\right)
& = \frac{(1+\varepsilon)^N}{(1-\varepsilon)^{2N}}\mathcal{B}_\mu^{\mathcal{F}}\left(F, \textbf{f}, \frac{(1-\varepsilon)^{2N}}{(1+\varepsilon)^N}(M-\delta_2)\right)\\
&\leq  \frac{(1+\varepsilon)^N}{(1-\varepsilon)^{2N}}\mathcal{B}_\mu^{\mathcal{F}}\left(F, \textbf{f},M\right).
\end{align*}

Letting $\varepsilon\rightarrow 0$, the continuity of $\mathcal{B}$ gives exactly $\mathcal{B}_{\mu}^{\mathcal{F}}(F,\textbf{\textup{f}},M)\geq\mathcal{B}(F,\textbf{\textup{f}},M)$. The other inequality is proved in the subsection 6.1.

\section{The Bellman function $\widetilde{\mathcal{B}}_\mu^\mathcal{F}(F, \textbf{f})$ of the maximal operators}

\subsection{$\widetilde{\mathcal{B}}_{\mu}^{\mathcal{F}}(F,\textbf{\textup{f}}) \leq \mathcal{B}_{\mu}^{\mathcal{F}}(F,\textbf{\textup{f}},1)$} 
Let us relate the maximal function (\ref{eq13}) to the Bellman function $\mathcal{B}_{\mu}^{\mathcal{F}}(F,\textbf{\textup{f}},M)$. Define $E_n=\{x\in\mathcal{X}: n~\textup{is the smallest integer, such that}~f^*(x)=|f_n(x)|\}$. Obviously, $\{E_n\}_{n\geq0}$ forms a disjoint partition of $\mathcal{X}$. We can compute
\begin{align*}
||f^*||_{\ci{L^p(\mathcal{X},\mathcal{F},\mu)}}^p
& =\mathbb{E}^\mu\left[|f^*|^p\right]=\mathbb{E}^\mu\left[\sum_{n\geq0}|f_n|^p\textbf{1}_{\ci{E_n}}\right]\\
&=\mathbb{E}^\mu\left[\sum_{n\geq0}\mathbb{E}^\mu[\textbf{1}_{\ci{E_n}}|\mathcal{F}_n]\cdot|f_n|^p\right].
\end{align*}
Let $\alpha_n=\mathbb{E}^\mu[\textbf{1}_{\ci{E_n}}|\mathcal{F}_n]$, $n\geq0$. The connection between the maximal function (\ref{eq13}) and $\mathcal{B}_{\mu}^{\mathcal{F}}(F,\textbf{\textup{f}},M)$ relies on the following simple fact.
\begin{lemma}
$\{\alpha_n\}_{n\geq0}$ is a Carleson sequence with $C=1$. (see Definition \ref{Def1}).
\end{lemma}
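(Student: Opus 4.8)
The plan is to verify directly the two defining requirements of a Carleson sequence in Definition \ref{Def1}. First I would note that each $\alpha_n=\mathbb{E}^\mu[\textbf{1}_{\ci{E_n}}|\mathcal{F}_n]$ is the conditional expectation of a non-negative function and is $\mathcal{F}_n$-measurable by construction, so $\{\alpha_n\}_{n\geq0}$ is automatically a non-negative $\mathcal{F}_n$-adapted sequence; this disposes of the measurability condition.

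The only content is the Carleson estimate (\ref{eq6}) with $C=1$. Fixing $n\geq0$, for every $k\geq n$ we have $\mathcal{F}_n\subseteq\mathcal{F}_k$, so the tower property gives $\mathbb{E}^\mu[\alpha_k|\mathcal{F}_n]=\mathbb{E}^\mu\big[\mathbb{E}^\mu[\textbf{1}_{\ci{E_k}}|\mathcal{F}_k]\big|\mathcal{F}_n\big]=\mathbb{E}^\mu[\textbf{1}_{\ci{E_k}}|\mathcal{F}_n]$. Summing over $k\geq n$ and interchanging the sum with the conditional expectation by the monotone convergence theorem for conditional expectations (valid because each $\textbf{1}_{\ci{E_k}}\geq0$), I obtain
\begin{align*}
\mathbb{E}^\mu\left[\sum_{k\geq n}\alpha_k\Big|\mathcal{F}_n\right]
=\sum_{k\geq n}\mathbb{E}^\mu[\textbf{1}_{\ci{E_k}}|\mathcal{F}_n]
=\mathbb{E}^\mu\left[\sum_{k\geq n}\textbf{1}_{\ci{E_k}}\Big|\mathcal{F}_n\right].
\end{align*}

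To finish, I would invoke that $\{E_n\}_{n\geq0}$ is a disjoint partition of $\mathcal{X}$, whence $\sum_{k\geq n}\textbf{1}_{\ci{E_k}}=\textbf{1}_{\ci{\bigcup_{k\geq n}E_k}}\leq1$ pointwise; monotonicity of the conditional expectation then bounds the right-hand side of the last display by $1$, which is exactly (\ref{eq6}) with $C=1$. I do not expect any genuine obstacle here: the proof is essentially a one-line computation, and the only point that deserves a moment's care is the interchange of the infinite sum with the conditional expectation, which is precisely where non-negativity of the indicators (equivalently, of the $\alpha_k$) is used.
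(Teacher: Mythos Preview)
Your proof is correct and essentially the same as the paper's. The only cosmetic difference is that the paper verifies the bound $\mathbb{E}^\mu\left[\sum_{k\geq n}\alpha_k\,\middle|\,\mathcal{F}_n\right]\leq 1$ by integrating against an arbitrary $E\in\mathcal{F}_n$ (obtaining $\mathbb{E}^\mu\left[\sum_{k\geq n}\alpha_k\textbf{1}_{\ci{E}}\right]=\mathbb{E}^\mu\left[\sum_{k\geq n}\textbf{1}_{\ci{E_k\cap E}}\right]\leq\mu(E)$), whereas you compute the conditional expectation directly via the tower property; both rely on the same fact that the $E_k$ are disjoint.
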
 
\begin{proof}
It is clear that each $\alpha_n$ is non-negative and $\mathcal{F}_n$-measurable. Moreover, for every set $E\in\mathcal{F}_n$, we have $\mathbb{E}^\mu\left[\sum_{k\geq n}\alpha_k\textbf{1}_{\ci{E}}\right]=\mathbb{E}^\mu\left[\sum_{k\geq n}\textbf{1}_{\ci{E_k\cap E}}\right]\leq\mu(E)$. So we prove the claim.
\end{proof}
To prove (\ref{eq 1001}), fix $\mathbb{E}^\mu[f^p]=F$ and $\mathbb{E}^\mu[f]=\textbf{f}$. Since $\{\alpha_n\}_{n\geq0}$ is a Carleson sequence with $C=1$ and $\mathbb{E}^\mu\left[\sum_{n\geq 0}\alpha_n\right]=1$, we conclude that $\widetilde{\mathcal{B}}_{\mu}^{\mathcal{F}}(F,\textbf{\textup{f}}) \leq \mathcal{B}_{\mu}^{\mathcal{F}}(F,\textbf{\textup{f}},1)$.

\subsection{$\widetilde{\mathcal{B}}_{\mu}^{\mathcal{F}}(F,\textbf{\textup{f}}) = \mathcal{B}_{\mu}^{\mathcal{F}}(F,\textbf{\textup{f}},1)$ for an infinitely refining filtration} 
Again, we appeal to the modified remodeling from subsection 6.2, but only with $M=1$.  Note that we have $$\mathbb{E}^\mu\left[\sum_{n\geq0}\alpha_n\left|\widetilde{f}_n\right|^p\right]
=\mathbb{E}^\mu\left[\sum_{k\geq0}\alpha_{n_k}\left|\widetilde{f}_{n_k}\right|^p\right]
=\sum_{k\geq0}\sum_{j=1}^{2^k}\alpha_{I_j^k}
\left\langle\left|\widetilde{f}_{n_k}\right|^p\right\rangle_{\ci{\mathcal{X}_j^k,\mu}}.$$ 
To proceed, we observe that

\begin{lemma} For every $0\leq k\leq N$ and $1\leq j\leq2^k$, we have
\begin{align}
\left|\widetilde{f}_{n_{\ci{N-k}}}\right|\textbf{\textup{1}}_{\ci{\mathcal{X}_j^{N-k}}}\leq\frac{(1+\varepsilon)^k}{(1-\varepsilon)^k}\langle\widetilde{f}\rangle_{\ci{\mathcal{X}_j^{N-k},\mu}}.\label{eq15}
\end{align}
\end{lemma}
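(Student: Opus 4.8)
The plan is to prove (\ref{eq15}) by induction on $k$, running from $k=0$ up to $k=N$, and extracting the conditional expectation $\widetilde{f}_{n_{N-k}}$ level by level through the remodeled filtration. For $k=0$ the claim reads $|\widetilde{f}_{n_N}|\textbf{1}_{\ci{\mathcal{X}_j^N}}\leq\langle\widetilde{f}\rangle_{\ci{\mathcal{X}_j^N,\mu}}$, which is immediate because $\widetilde{f}$ is piecewise constant on $\{\mathcal{X}_j^N\}$ (hence $\widetilde{f}_{n_N}=\widetilde{f}$ there, and in fact equals its own average over $\mathcal{X}_j^N$); even after the Haar-type update $\widetilde{f}\mapsto\widetilde{f}+t_0g$, the function $g$ is supported on $\mathcal{X}_1^N$ with mean zero there, so on each $\mathcal{X}_j^N$ the relevant conditional expectation is still constant and equals the average. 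Actually one should be a touch careful here about absolute values and the possibly-negative updated $\widetilde{f}$; the remark after the update in subsection 6.2 (``all the relevant average values we will use are still non-negative'') is exactly what licenses writing $|\langle\widetilde f\rangle|=\langle\widetilde f\rangle$ in these estimates.

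For the inductive step, I would assume (\ref{eq15}) holds at level $k$ and pass to level $k+1$. The key structural fact is that $\mathcal{X}_j^{N-(k+1)}$ splits into its two children $\mathcal{X}_{2j-1}^{N-k}$ and $\mathcal{X}_{2j}^{N-k}$ (up to the discarded piece, which matters only for the $\varepsilon$-distortion and is already folded into Proposition \ref{P} (i)), and that on $\mathcal{X}_j^{N-(k+1)}$ the value $\widetilde{f}_{n_{N-(k+1)}}$ is the $\mu$-weighted average of the two child-values $\widetilde f_{n_{N-k}}$. Concretely, on $\mathcal{X}_j^{N-(k+1)}$,
\begin{align*}
\left|\widetilde{f}_{n_{N-(k+1)}}\right|
&=\left|\frac{\mu(\mathcal{X}_{2j-1}^{N-k})}{\mu(\mathcal{X}_j^{N-(k+1)})}\langle\widetilde f\rangle_{\ci{\mathcal{X}_{2j-1}^{N-k},\mu}}+\frac{\mu(\mathcal{X}_{2j}^{N-k})}{\mu(\mathcal{X}_j^{N-(k+1)})}\langle\widetilde f\rangle_{\ci{\mathcal{X}_{2j}^{N-k},\mu}}\right|\\
&\leq\max\left\{\langle\widetilde f\rangle_{\ci{\mathcal{X}_{2j-1}^{N-k},\mu}},\langle\widetilde f\rangle_{\ci{\mathcal{X}_{2j}^{N-k},\mu}}\right\}.
\end{align*}
Then I bound each child-average $\langle\widetilde f\rangle_{\ci{\mathcal{X}_{2i}^{N-k},\mu}}$ in terms of the parent-average $\langle\widetilde f\rangle_{\ci{\mathcal{X}_j^{N-(k+1)},\mu}}$, picking up one factor $\frac{1+\varepsilon}{1-\varepsilon}$ from the measure-distortion of a single refining step — this is exactly the estimate (\ref{eq12}) of Proposition \ref{P5}, applied with the set $E=\mathcal{X}_{2i}^{N-k}$, or more simply: the sub-tree masses distort by a single factor of $\frac{1}{2}(1\pm\varepsilon)$ at each level, so a child-average and the parent-average differ multiplicatively by at most $\frac{1+\varepsilon}{1-\varepsilon}$. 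Combining the displayed max-bound with this one-step distortion and then with the inductive hypothesis (to get $(1+\varepsilon)^k/(1-\varepsilon)^k$ in front of $\langle\widetilde f\rangle_{\ci{\mathcal{X}_j^{N-(k+1)},\mu}}$... wait — rather, the induction is the other way: one uses the level-$k$ bound to control $|\widetilde f_{n_{N-k}}|$ on each child, then the child-average against the parent-average) yields the factor $(1+\varepsilon)^{k+1}/(1-\varepsilon)^{k+1}$ at level $k+1$, completing the induction.

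The step I expect to be the main obstacle is the bookkeeping around the discarded sets $\mathcal{X}_j^k\setminus\widetilde{\mathcal{X}_j^k}$ and the possibly-negative updated $\widetilde f$: one must verify that passing from $\widetilde{\mathcal{X}_j^k}$ back to $\mathcal{X}_j^k$ introduces no distortion beyond what Proposition \ref{P} (i) and (iii) already account for, and that the conditional expectation $\widetilde f_{n_{N-k}}$ genuinely is the two-point average claimed above (i.e. that $\widetilde f$ is constant, equal to its average, on each $\mathcal{X}_i^{N-k}$ at the relevant scale, which follows from $\widetilde f$ being piecewise constant on the $\mathcal{X}_j^N$ together with $g$ being mean-zero on $\mathcal{X}_1^N$). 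Once these technical points are pinned down, the estimate is a clean telescoping of the single-step distortion factor, entirely parallel to the proof of Proposition \ref{P5} (iv). This lemma will then feed directly into the lower bound $\widetilde{\mathcal{B}}_\mu^\mathcal{F}(F,\textbf{f})\geq\mathcal{B}_\mu^\mathcal{F}(F,\textbf{f},1)$ by providing the reverse control on $|\widetilde f_{n_k}|^p$ needed to compare $\mathbb{E}^\mu[\sum_n\alpha_n|\widetilde f_n|^p]$ with $\sum_{J\subseteq I}\alpha_{\ci J}\langle f\rangle_{\ci J}^p$.
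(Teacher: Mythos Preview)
Your inductive strategy matches the paper's, but the inductive step has a real gap: you are treating $\widetilde{f}_{n_{N-(k+1)}}$ as \emph{constant} on $\mathcal{X}_j^{N-(k+1)}$ (equal to the weighted average of the two child averages). In a general infinitely refining filtration this is false. The set $\mathcal{X}_j^{N-(k+1)}$ is $\mathcal{F}_{n_{N-(k+1)}}$-measurable, but $\mathcal{F}_{n_{N-(k+1)}}$ is typically \emph{much finer} than the $\sigma$-algebra generated by the blocks $\{\mathcal{X}_l^{N-(k+1)}\}$. So $\widetilde{f}_{n_{N-(k+1)}}=\mathbb{E}^\mu[\widetilde f\mid\mathcal{F}_{n_{N-(k+1)}}]$ can vary across $\mathcal{X}_j^{N-(k+1)}$, and your displayed equality (a function on the left, a number on the right) simply does not hold. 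Your justification --- that $\widetilde f$ is piecewise constant on the $\mathcal{X}_j^N$'s --- tells you only that $\widetilde f$ is $\mathcal{F}_{n_N}$-measurable; it says nothing about how the conditional expectation at the coarser time $n_{N-(k+1)}$ behaves inside $\mathcal{X}_j^{N-(k+1)}$.

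This is exactly why the paper's proof tests against an \emph{arbitrary} $\mathcal{F}_{n_{N-(k+1)}}$-measurable set $E\subseteq\mathcal{X}_j^{N-(k+1)}$: to bound a $\mathcal{F}_{n_{N-(k+1)}}$-measurable function pointwise, one bounds $\mu(E)^{-1}\int_E$ uniformly over all such $E$. The induction hypothesis controls $\int_{E\cap\mathcal{X}_{2j-1}^{N-k}}\widetilde f_{n_{N-k}}\,d\mu$ and $\int_{E\cap\mathcal{X}_{2j}^{N-k}}\widetilde f_{n_{N-k}}\,d\mu$, and the crucial ingredient is (\ref{eq11}) of Proposition~\ref{P5}, which bounds $\mu(E\cap\mathcal{X}_{2i}^{N-k})/\mu(E\cap\mathcal{X}_j^{N-(k+1)})\leq\frac12(1+\varepsilon)$ \emph{uniformly in $E$}. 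You invoke (\ref{eq12}) only to compare child and parent averages; that is not enough. The uniform-in-$E$ control of (\ref{eq11}) is what replaces your (invalid) constancy assumption and is the genuine content of the lemma.
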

\begin{proof}
First note that $\left|\widetilde{f}_{n_{\ci{N-k}}}\right|\textbf{\textup{1}}_{\ci{\mathcal{X}_j^{N-k}}} = \widetilde{f}_{n_{\ci{N-k}}}\textbf{\textup{1}}_{\ci{\mathcal{X}_j^{N-k}}}$ for every $0\leq k\leq N$ and $1\leq j\leq2^k$. Induction on k, for $k=0$, the construction of $\widetilde{f}$ immediately gives $\widetilde{f}_{n_{\ci{N}}}\textbf{1}_{\ci{\mathcal{X}_j^N}}=
\langle\widetilde{f}\rangle_{\ci{\mathcal{X}_j^{N},\mu}}$, $1\leq j\leq2^N$. Assuming (\ref{eq15}) holds for $k$, then for every $\mathcal{F}_{n_{\ci{N-(k+1)}}}$-measurable set $E$, $E\subseteq\mathcal{X}_j^{N-(k+1)}$ and $\mu(E)>0$, we can estimate
\begin{align*}
\int_E\widetilde{f}_{n_{\ci{N-(k+1)}}}\textbf{1}_{\ci{\mathcal{X}_j^{N-(k+1)}}}d\mu
& =\int_{E\cap\mathcal{X}_j^{N-(k+1)}}\widetilde{f}d\mu
=\int_{E\cap\mathcal{X}_{2j-1}^{N-k}}\widetilde{f}_{n_{\ci{N-k}}}d\mu
+\int_{E\cap\mathcal{X}_{2j}^{N-k}}\widetilde{f}_{n_{\ci{N-k}}}d\mu\\
& \leq\frac{(1+\varepsilon)^k}{(1-\varepsilon)^k}\left[\langle\widetilde{f}\rangle_{\ci{\mathcal{X}_{2j-1}^{N-k},\mu}}\mu(E\cap\mathcal{X}_{2j-1}^{N-k})+\langle\widetilde{f}\rangle_{\ci{\mathcal{X}_{2j}^{N-k},\mu}}\mu(E\cap\mathcal{X}_{2j}^{N-k})\right].
\end{align*}
And hence, we can deduce
\begin{align*}
&~\mu(E)^{-1}\int_E\widetilde{f}_{n_{\ci{N-(k+1)}}}\textbf{1}_{\ci{\mathcal{X}_j^{N-(k+1)}}}d\mu,
~\textup{(}E\subseteq\mathcal{X}_{j}^{N-(k+1)}\textup{)}\\
&\leq\frac{(1+\varepsilon)^k}{(1-\varepsilon)^k}\left[ \langle\widetilde{f}\rangle_{\ci{\mathcal{X}_{2j-1}^{N-k},\mu}}\frac{\mu(E\cap\mathcal{X}_{2j-1}^{N-k})}{\mu(E\cap\mathcal{X}_{j}^{N-{k+1}})}+\langle\widetilde{f}\rangle_{\ci{\mathcal{X}_{2j}^{N-k},\mu}}\frac{\mu(E\cap\mathcal{X}_{2j}^{N-k})}{\mu(E\cap\mathcal{X}_{j}^{N-{k+1}})}\right],~\textup{(\ref{eq11})}\\
&\leq\frac{1}{2}\frac{(1+\varepsilon)^{k+1}}{(1-\varepsilon)^k}\left[\langle\widetilde{f}\rangle_{\ci{\mathcal{X}_{2j-1}^{N-k},\mu}}+\langle\widetilde{f}\rangle_{\ci{\mathcal{X}_{2j}^{N-k},\mu}}\right],~\textup{(Proposition \ref{P} (i))}\\
& \leq\frac{(1+\varepsilon)^{k+1}}{(1-\varepsilon)^{k+1}}\langle\widetilde{f}\rangle_{\ci{\mathcal{X}_j^{N-(k+1)},\mu}}.
\end{align*}
Since this is true for every $\mathcal{F}_{n_{\ci{N-(k+1)}}}$-measurable set $E$, $E\subseteq\mathcal{X}_j^{N-(k+1)}$ and $\mu(E)>0$, we prove (\ref{eq15}) for $k+1$.
\end{proof}

Applying (\ref{eq15}), we have
$$\mathbb{E}^\mu\left[\sum_{n\geq0}\alpha_n\left|\widetilde{f}_n\right|^p\right]
=\sum_{k\geq0}\sum_{j=1}^{2^k}\alpha_{I_j^k}
\left\langle\left|\widetilde{f}_{n_k}\right|^p\right\rangle_{\ci{\mathcal{X}_j^k,\mu}}\leq\frac{(1+\varepsilon)^{pN}}{(1-\varepsilon)^{pN}}\sum_{k\geq0}\sum_{j=1}^{2^k}\alpha_{I_j^k}
\langle\widetilde{f}\rangle_{\ci{\mathcal{X}_j^k,\mu}}^p.$$
And note that Proposition \ref{P} (iii) implies
$$\sum_{k,j: I_j^k\subseteq I_{j_0}^{k_0}}\alpha_{\ci{I_j^k}}\leq\left|I_{j_0}^{k_0}\right|\leq\frac{1}{(1-\varepsilon)^N}\mu(\mathcal{X}_{j_0}^{k_0})~\textup{for every}~0\leq k_0\leq N, 1\leq j_0\leq2^{k_0}.$$
Now, let us recall a useful lemma established in \cite{M}, formulated in our language, 

\begin{lemma}\label{l}
Suppose $\alpha_{\ci{I_j^k}}\geq0$, where $0\leq k\leq N, 1\leq j\leq2^k$, satisfies
\begin{align}\label{eq16}
\sum_{k,j: I_j^k\subseteq I_{j_0}^{k_0}}\alpha_{\ci{I_j^k}}\leq C\mu(\mathcal{X}_{j_0}^{k_0})
\end{align}
for some constant $C>0$, then we can choose pairwise disjoint measurable $\mathcal{A}_j^k\subseteq\mathcal{X}$ such that $\mathcal{A}_j^k\subseteq\mathcal{X}_j^k$ and $\alpha_{\ci{I_j^k}}=C\mu(\mathcal{A}_j^k)$.
\end{lemma}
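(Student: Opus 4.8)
The plan is to carve all the sets $\mathcal{A}_j^k$ simultaneously out of the dyadic tree by an induction that runs \emph{from the leaves upward}, keeping at each node a ``reserved region'' whose measure is exactly the total $\alpha$-mass of the subtree hanging below that node. Throughout, write $v$ for a node $I_j^k$ of the tree ($0\le k\le N$, $1\le j\le 2^k$), $\mathcal{X}_v$ for $\mathcal{X}_j^k$, $\alpha_v$ for $\alpha_{\ci{I_j^k}}$, and let $v_1,v_2$ be the two children of $v$, namely $I_{2j-1}^{k+1}$ and $I_{2j}^{k+1}$. Put $S(v)=\sum_{w:\,I_w\subseteq I_v}\alpha_w$, the sum over $v$ and all its descendants. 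Then the hypothesis (\ref{eq16}) reads $S(v)\le C\mu(\mathcal{X}_v)$ for every $v$, and since the descendants of $v$ split as $\{v\}$ together with the two disjoint subtrees rooted at $v_1$ and $v_2$ one has $S(v)=\alpha_v+S(v_1)+S(v_2)$; we will also use the parallel facts $\mathcal{X}_{v_1}\cap\mathcal{X}_{v_2}=\emptyset$ and $\mathcal{X}_{v_1}\cup\mathcal{X}_{v_2}\subseteq\mathcal{X}_v$ coming from the construction of Section 5.

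The core claim, proved by downward induction on $k$, is: for every node $v$ there is a measurable $\mathcal{R}_v\subseteq\mathcal{X}_v$ with $\mu(\mathcal{R}_v)=S(v)/C$ which is a disjoint union $\mathcal{R}_v=\bigsqcup_{w:\,I_w\subseteq I_v}\mathcal{A}_w$ with $\mathcal{A}_w\subseteq\mathcal{X}_w$ and $\mu(\mathcal{A}_w)=\alpha_w/C$. If $v$ is a leaf, then $S(v)=\alpha_v\le C\mu(\mathcal{X}_v)$ and we take $\mathcal{A}_v=\mathcal{R}_v$ to be any subset of $\mathcal{X}_v$ of measure $\alpha_v/C$. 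For the inductive step, let $v$ have children $v_1,v_2$ for which $\mathcal{R}_{v_1},\mathcal{R}_{v_2}$ have already been built. These are disjoint (being contained in the disjoint $\mathcal{X}_{v_1},\mathcal{X}_{v_2}$) and both lie in $\mathcal{X}_v$, so
\[
\mu\big(\mathcal{X}_v\setminus(\mathcal{R}_{v_1}\cup\mathcal{R}_{v_2})\big)=\mu(\mathcal{X}_v)-\frac{S(v_1)+S(v_2)}{C}\ \ge\ \frac{S(v)-S(v_1)-S(v_2)}{C}=\frac{\alpha_v}{C},
\]
leaving room to pick $\mathcal{A}_v\subseteq\mathcal{X}_v\setminus(\mathcal{R}_{v_1}\cup\mathcal{R}_{v_2})$ of measure $\alpha_v/C$; set $\mathcal{R}_v=\mathcal{A}_v\sqcup\mathcal{R}_{v_1}\sqcup\mathcal{R}_{v_2}$, whose measure is $(\alpha_v+S(v_1)+S(v_2))/C=S(v)/C$, which closes the induction.

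Applying the claim at the root $I_1^0$ produces all the $\mathcal{A}_j^k$ at once with $\mu(\mathcal{A}_j^k)=\alpha_{\ci{I_j^k}}/C$, i.e. $\alpha_{\ci{I_j^k}}=C\mu(\mathcal{A}_j^k)$. Pairwise disjointness across the whole tree is then a bookkeeping consequence: if $u,v$ are incomparable, let $w$ be their lowest common ancestor with children $w_1,w_2$; then $\mathcal{A}_u\subseteq\mathcal{R}_{w_1}\subseteq\mathcal{X}_{w_1}$ and $\mathcal{A}_v\subseteq\mathcal{R}_{w_2}\subseteq\mathcal{X}_{w_2}$, which are disjoint; and if $u$ is a proper ancestor of $v$, then $v$ lies in the subtree of one child $w_i$ of $u$, so $\mathcal{A}_v\subseteq\mathcal{R}_{w_i}$, while $\mathcal{A}_u$ was chosen disjoint from $\mathcal{R}_{w_1}\cup\mathcal{R}_{w_2}$.

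The one routine technical point is the repeated instruction ``choose a measurable subset of prescribed measure not exceeding the total'': this is legitimate here because in an infinitely refining filtration the ambient measure is non-atomic on each $\mathcal{X}_j^k$ (a positive-measure set admits Haar-type splittings into nearly equal halves, hence cannot carry an atom), so Sierpi\'nski's theorem supplies subsets of every measure up to the total. I expect the real difficulty to lie not in any estimate but in organizing the induction correctly: one must carry along the reserved regions $\mathcal{R}_v$ with $\mu(\mathcal{R}_v)=S(v)/C$, rather than trying to carve greedily from the root, precisely because it is the Carleson inequality $S(v)\le C\mu(\mathcal{X}_v)$ \emph{at the node $v$} — and nowhere else — that supplies the room for $\mathcal{A}_v$ once the children's regions have been set aside.
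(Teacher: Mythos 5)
Your proof is correct and takes essentially the same approach as the paper: a downward induction on the level $k$ (from the leaves $k=N$ up to the root), carving $\mathcal{A}_j^k$ out of the part of $\mathcal{X}_j^k$ not yet occupied by its proper descendants, with the Carleson bound (\ref{eq16}) at each node supplying exactly the room needed. Your reformulation via the reserved regions $\mathcal{R}_v$ with $\mu(\mathcal{R}_v)=S(v)/C$, and your explicit appeal to the non-atomicity of $\mu$ forced by the infinitely refining hypothesis, merely make visible bookkeeping that the paper leaves implicit.
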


\begin{proof}
Without loss of generality, we can assume $C=1$. We start at the level $k=N$. Since (\ref{eq16}) with $C=1$ implies $\alpha_{\ci{I_j^N}}\leq\mu(\mathcal{X}_{j}^{N})$ for every $1\leq j\leq2^N$, we can choose $\mathcal{A}_j^N\subseteq\mathcal{X}_j^N$ such that $\alpha_{\ci{I_j^N}}=\mu(\mathcal{A}_j^N)$. Assuming that we have chosen pairwise disjoint measurable $\mathcal{A}_j^k$ for all $k\geq k_0+1$ and $1\leq j\leq2^k$, note that (\ref{eq16}) with $C=1$ gives
$$\alpha_{\ci{I_{j_0}^{k_0}}}+\sum_{k,j: I_j^k\subsetneqq I_{j_0}^{k_0}}\alpha_{\ci{I_j^k}}\leq \mu(\mathcal{X}_{j_0}^{k_0}),~~\textup{so}~~\alpha_{\ci{I_{j_0}^{k_0}}}\leq
\mu\left(\mathcal{X}_{j_0}^{k_0}\setminus\bigcup_{k,j: I_j^k\subsetneqq I_{j_0}^{k_0}}\mathcal{A}_j^k\right),$$
and thus we can choose measurable set $\mathcal{A}_{j_0}^{k_0}\subseteq\mathcal{X}_{j_0}^{k_0}\setminus\bigcup_{k,j: I_j^k\subsetneqq I_{j_0}^{k_0}}\mathcal{A}_j^k$, such that $\alpha_{\ci{I_{j_0}^{k_0}}}=\mu(\mathcal{A}_{j_0}^{k_0})$. Continue this process for all the indices. This proves the lemma.
\end{proof}

By Lemma \ref{l}, we can estimate
\begin{align*}
\mathbb{E}^\mu\left[\sum_{n\geq0}\alpha_n\left|\widetilde{f}_n\right|^p\right]
& \leq\frac{(1+\varepsilon)^{pN}}{(1-\varepsilon)^{pN}}\sum_{k\geq0}\sum_{j=1}^{2^k}\alpha_{I_j^k}
\langle\widetilde{f}\rangle_{\ci{\mathcal{X}_j^k,\mu}}^p\\
& = \frac{(1+\varepsilon)^{pN}}{(1-\varepsilon)^{pN}}\sum_{k\geq0}\sum_{j=1}^{2^k}\frac{1}{(1-\varepsilon)^N}\mu(\mathcal{A}_j^k)
\langle\widetilde{f}_{n_k}\rangle_{\ci{\mathcal{X}_j^k,\mu}}^p\\
& \leq\frac{(1+\varepsilon)^{pN}}{(1-\varepsilon)^{(p+1)N}}\sum_{k\geq0}\sum_{j=1}^{2^k}\mathbb{E}^\mu\left[\left|\widetilde{f}^*\right|^p\textbf{1}_{\ci{\mathcal{A}_j^k}}\right],~\textup{(disjointness)}\\
& \leq \frac{(1+\varepsilon)^{pN}}{(1-\varepsilon)^{(p+1)N}}\mathbb{E}^\mu\left[\left|\widetilde{f}^*\right|^p\right].
\end{align*}

Applying (\ref{eq last}) and (\ref{c2}) with $M=1$, together with Theorem \ref{THM A}, we have
\begin{align*}
\frac{(1+\varepsilon)^{pN}}{(1-\varepsilon)^{(p+1)N}}\mathbb{E}^\mu\left[\left|\widetilde{f}^*\right|^p\right]
& \geq \mathbb{E}^\mu\left[\sum_{n\geq0}\alpha_n\left|\widetilde{f}_n\right|^p\right]
\geq (1-\varepsilon)^{pN}
\sum_{J\subseteq I}\alpha_{\ci{J}}\langle f\rangle_{\ci{J}}^p\\
& =(1-\varepsilon)^{pN}\mathcal{B}\left(F, \textbf{f}, 1\right)=(1-\varepsilon)^{pN}\mathcal{B}_\mu^{\mathcal{F}}\left(F, \textbf{f}, 1\right).
\end{align*}

Recall that $\mathbb{E}^\mu\left[\left|\widetilde{f}\right|^p\right]=F$ and $\mathbb{E}^\mu\left[\widetilde{f}\right]=\textbf{f}$. 
Letting $\varepsilon\rightarrow 0$, the continuity of $\mathcal{B}$, and thus of $\mathcal{B}_\mu^{\mathcal{F}}$, gives exactly $\widetilde{\mathcal{B}}_{\mu}^{\mathcal{F}}(F,\textbf{\textup{f}}) \geq \mathcal{B}_{\mu}^{\mathcal{F}}(F,\textbf{\textup{f}},1)$. The other inequality is proved in the subsection 7.1.

\section*{Acknowledgement}
The author would like to thank his PhD thesis advisor, Serguei Treil, for suggesting this problem and for the invaluable guidance and support throughout the course of preparation.

\end{document}